\crefname{hypothesis}{Hypothesis}{Hypotheses}
\definecolor{darkred}{rgb}{0.85,0,0}
\definecolor{green}{rgb}{0,0.7,0}
\newtheorem{example}{Example}[section]
\newtheorem{assumption}{Assumption}[section]
\newcommand{\al}{\alpha}
\newcommand{\fy}{\varphi}
\renewcommand{\d}{{\rm d}}
\def\Dal{{\partial_t^\al}}
\def\Om{\Omega}
\def\II{(\Om)}
\def\dH#1{\dot H^{#1}(\Omega)}
\def\Uad{\mathcal{A}}
\begin{document}

\title{Numerical Estimation of a Diffusion Coefficient in Subdiffusion\thanks{The work of B. Jin is supported by UK EPSRC grant EP/T000864/1, and
 the research of Z. Zhou is supported by Hong Kong RGC grant (No. 15304420).} }

\author{Bangti Jin\thanks{Department of Computer Science, University College London, Gower Street, London, WC1E 6BT, UK
(\texttt{b.jin@ucl.ac.uk, bangti.jin@gmail.com})}
\and Zhi Zhou\thanks{Department of Applied Mathematics, The Hong Kong Polytechnic University, Kowloon, Hong Kong.
(\texttt{zhizhou@polyu.edu.hk, zhizhou0125@gmail.com})}}

\maketitle

\begin{abstract}
In this work, we consider the numerical recovery of a spatially dependent diffusion coefficient in a subdiffusion model from distributed observations.
The subdiffusion model involves a Caputo fractional derivative of order $\alpha\in(0,1)$ in time. The numerical estimation is based
on the regularized output least-squares formulation, with an $H^1(\Omega)$ penalty. We prove the well-posedness of the
continuous formulation, e.g., existence and stability. Next, we develop a fully discrete scheme based on the Galerkin finite element method in
space and backward Euler convolution quadrature in time. We prove the subsequential convergence of the sequence of discrete
solutions to a solution of the continuous problem as the discretization parameters (mesh size and time step size) tend to zero.
Further, under an additional regularity condition on the exact coefficient, we derive convergence rates in a weighted $L^2(\Omega)$ norm for
the discrete approximations to the exact coefficient {in the one- and two-dimensional cases}. The analysis relies heavily on suitable nonstandard nonsmooth data error estimates for the
direct problem. We provide illustrative numerical results to support the theoretical study.
\end{abstract}

\begin{keywords}
parameter identification, subdiffusion, fully discrete scheme, convergence, error estimate, Tikhonov regularization
\end{keywords}

\begin{AMS}
35R11, 35R30, 49M25, 65M60 
\end{AMS}

\section{Introduction}\label{sec:intro}
Let $\Omega\subset\mathbb{R}^d$ ($d=1,2,3$) be a convex polyhedral domain with a boundary
$\partial\Omega$. Consider the following initial-boundary value problem of the subdiffusion equation:
\begin{equation}\label{eqn:fde}
    \left\{\begin{aligned} \Dal  u(x,t) -\nabla\cdot(q(x)\nabla u(x,t))  &= f(x,t),&&  (x,t)\in\Omega\times(0,T],\\
    u(x,0) &= u_0(x), && x\in \Omega,\\
u(x,t) &=0,&& (x,t)\in\partial\Omega\times(0,T],
   \end{aligned} \right.
\end{equation}
where $T>0$ is the final time. The functions $f$ and $u_0$ are the given source term and initial condition, respectively, and their
precise regularity will be specified below. The notation $\Dal u$,
denotes the Caputo fractional derivative in time of order $\alpha\in(0,1)$, defined by \cite{KilbasSrivastavaTrujillo:2006}
\begin{equation*}
  \Dal u (t) = \frac{1}{\Gamma(1-\alpha)} \int_0^t(t-s)^{-\alpha}  {u'(s)}\ {\rm d}s.
\end{equation*}
The fractional derivative $\partial_t^\alpha u$ recovers the usual first order derivative $u'(s)$ as the order $\alpha\to1^-$
for smooth functions $u$. Thus the model is a fractional analogue of the classical diffusion model. {Throughout,
we denote the solution to problem \eqref{eqn:fde} by $u(q)$ to explicitly indicate its dependence on the diffusion coefficient $q$.}

The model \eqref{eqn:fde} has received enormous attention in recent years, due to their extraordinary capability for
describing anomalously slow diffusion processes (also known as subdiffusion), which displays local motion occasionally
interrupted by long sojourns and trapping effects.  At a microscopic level, such anomalous diffusion processes are
accurately modeled by continuous time random walk, where the waiting time between consecutive particle jumps
follows a heavy tailed distribution with a divergent mean, and the model \eqref{eqn:fde} is the macroscopic counterpart,
describing the evolution of the probability density function (in $\mathbb{R}^d$) of the particle appearing at time
$t$ and spatical location $t$, in analogy to Brownian motion for
normal diffusion. These processes are characterized by sublinear growth of the particle mean squared displace with
the time. The model \eqref{eqn:fde} has found many applications in physics, biology and finance etc, including electron
transport with copier \cite{ScherMontroll:1975}, thermal diffusion on fractal domains \cite{Nigmatulli:1986},
dispersive transport of ions in column experiments \cite{AdamsGelhar:1992,HatanoHatano:1998}, protein transport within
membranes \cite{KouXie:2004,Kou:2008,RitchieShan:2005}, and solute transport in heterogeneous media \cite{DentzCortis:2004,
BerkowitzCortis:2006}. We refer interested readers to the comprehensive reviews \cite{BouchaudGeorges:1990,MetzlerKlafter:2000,
MetzlerJeon:2014} for physical modeling and long lists of diverse applications.

This work is concerned with numerically identifying the diffusion coefficient $q^\dag\in L^\infty (\Omega)$ the  model
\eqref{eqn:fde} from the (noisy) distributed observation
\begin{equation}\label{eqn:observ}
z^\delta (x,t) = u(q^\dag)(x,t) + \xi(x,t),\quad (x,t)\in\Omega\times[0,T],
\end{equation}
where $u(q^\dag)$ is the exact data {(corresponding to the exact diffusion coefficient $q^\dag$),}
and $\xi$ denotes the noise, with an accuracy  $\delta=\|z^*-z^\delta\|_{L^2(0,T;L^2(\Omega))}$.
The inverse problem is a fractional analogue of the inverse conductivity problem for standard parabolic problems, which
has been extensively studied both numerically and theoretically; see the monograph \cite[Chapter 9]{Isakov:2006} for
relevant mathematical theory and the references \cite{Gutman:1990,Karkkainen:1997,KeungZou:1998,
EnglZou:2000,DuChateauThewellButters:2004,NilssenTai:2005,WangZou:2010} for a rather incomplete list of works
on numerical identification of a diffusion coefficient in standard parabolic problems. Most of these existing works
formulate the inverse problem into an output least-squares formulation, with a suitable penalty, e.g., Sobolev smoothness
or total variation. Formally, the inverse problem is over-determined for uniqueness / identifiability, and the terminal data at time $T$
or lateral Cauchy data may suffice unique recovery (see \cite[Chapter 9]{Isakov:2006} for relevant uniqueness results for
in standard parabolic case). Nonetheless, numerically, the full space-time datum \eqref{eqn:observ}
or a restricted version over the region  $\Omega\times[T_0,T]$ is frequently employed in existing studies for standard
parabolic problems \cite{Gutman:1990,TaiKarkkarinen:1995,Karkkainen:1997,KeungZou:1998,EnglZou:2000,
DuChateauThewellButters:2004,NilssenTai:2005,WangZou:2010}, due to, e.g., weak regularity assumption on problem data.
In particular, all existing works \cite{KeungZou:1998,NilssenTai:2005,WangZou:2010} on error estimates (for parabolic
problems) require the full space-time datum \eqref{eqn:observ}, and it appears to be open to extend these results to
partial data. Thus, we shall focus the analysis on the full datum \eqref{eqn:observ} below.

In this work, we shall develop a numerical procedure for recovering a spatially dependent diffusion coefficient.
We formulate an output least-squares formulation with an $H^1(\Omega)$ penalty, {which is suitable for recovering
a smooth coefficient $q$}, and provide a complete analysis of both continuous and discrete formulations, including
well-posedness and convergence of discrete approximations, for weak regularity assumption on the problem data, in
\cref{sec:cont,sec:fully}, respectively. Furthermore, in \cref{sec:err}, we derive some \textit{a priori} weighted
$L^2(\Omega)$ error estimates on the discrete approximation under a mild regularity assumption on the exact diffusion
coefficient $q^\dag$ {in one- and two-dimensional cases;} see \cref{thm:error-q} and \cref{cor:err-q}.
The obtained estimates depend on the spatial mesh size $h$,
temporal step size $\tau$, the noise level $\delta$, and the regularization parameter $\gamma$. These results extend the
corresponding results for the standard parabolic case \cite{Gutman:1990,KeungZou:1998,WangZou:2010}, and represent the
main theoretical achievements of the work.

Generally, when compared with standard parabolic problems, the presence of the time-fractional derivative
$\partial_t^\alpha u$ in the model \eqref{eqn:fde} poses a number of distinct challenges to the mathematical and numerical
analysis (see \cite{JinLazarovZhou:2019} for a concise overview): (i) due to the nonlocality of the Caputo derivative $\partial_t^\alpha u$, many
powerful tools from PDE theory and classical numerical analysis, e.g., energy argument and integration by parts formula,
are not directly applicable; (ii) the solution $u$ generally has only limited spatial and temporal regularity, even for
smooth problem data; (iii) high-order time stepping schemes often lack robustness with respect to the regularity of the
problem data; (iv) the nonlocality incurs a storage issue for time-stepping. Naturally, these challenges persist for the analysis of
the regularized output least-squares formulation \cref{eqn:ob}--\cref{eqn:var} below, and especially items (i) and (ii) represent the main technical
challenges in the convergence analysis, and hence it differs substantially from the standard parabolic counterpart.
Further, the error analysis is greatly complicated by the nonlinearity of the forward map, and thus standard techniques from
optimal control theory, via, e.g., convexity and the first-order optimality condition, also do not apply directly.
To overcome these technical challenges, we shall employ the positivity of the fractional
derivative operators (in \cref{lem:a-priori,lem:disc-pos}), nonsmooth data estimates (in \cref{lem:err-1}) and novel test
function $\fy$ (in \cref{thm:error-q}), which represent the main technical novelties of the work.

Now we briefly review relevant works from the inverse problem literature. Inverse problems for fractional diffusion
has started to attract much interest, and there has already been a vast literature (see, e.g., the review
\cite{JinRundell:2015}). There are a number of interesting works on recovering
the diffusion coefficient \cite{ChengYamamoto:2009,LiGuJia:2012,LiYamamoto:2013,Zhang:2016,KianOksanenYamamoto:2018}.
In an influential piece of work, Cheng et al \cite{ChengYamamoto:2009} proved the unique recovery of both diffusion
coefficient and fractional order from the lateral Cauchy data for the model \eqref{eqn:fde} with a Dirac source in the one
spatial dimensional case. The proof employs Laplace transform and Sturm-Liouville theory. Recently,
Kian et al \cite{KianOksanenYamamoto:2018} proved uniqueness for the recovery of two coefficients from the
Dirichlet-to-Neumann map \cite{KianOksanenYamamoto:2018}. Li et al \cite{LiGuJia:2012,LiYamamoto:2013} discussed the
numerical recovery of the diffusion coefficient (simultaneously with the fractional order), and showed various continuity
results of the parameter to state map. However, the numerical discretization was not analyzed in \cite{LiYamamoto:2013}.
Zhang \cite{Zhang:2016} proved the unique recovery for the case of a time-dependent $q\equiv
q(t)$, and devised a numerical scheme for its recovery. See also the work \cite{WeiLi:2018} for further numerical results
on recovering the diffusion coefficient from boundary data in the one-dimensional case, using a space-time variational
formulation, which allows only a zero initial condition. However, there is neither analysis of the discretized problem
nor error estimates in these interesting existing works. In sum, there is no rigorously study of the
discretization schemes, and it is precisely this gap that this work aims to fill
in. We refer interested readers also to the works \cite{ZhangZhou:2017,JiangLiLiuYamamoto:2017,KaltenbacherRundell:2019}
and references therein for further numerical methods on other nonlinear inverse problems for the subdiffusion model.

The rest of the paper is organized as follows. In \cref{sec:cont}, we formulate the continuous problem, and
analyze its well-posedness, e.g., existence and stability. Then in \cref{sec:fully}, we describe a fully discrete
scheme, and show the convergence of the discrete approximations to a solution of the continuous problem as the
discretization parameters tend to zero. In  \cref{sec:err}, we provide detailed error estimates for the discrete
approximations. Finally, in \cref{sec:numer}, we present illustrative one- and two-dimensional numerical results
to complement the analysis. We conclude the paper with further remarks in \cref{sec:concl}.

We end this section with some useful notation. Throughout, the notation $c$ denotes a generic constant, which may change
at each occurrence, but it is always independent of $q$, mesh size $h$ and time stepsize $\tau$. We shall
employ standard notation for Sobolev spaces \cite{AdamsFournier:2003}. The spaces $L^p(\Omega)$ and $W^{k,p}(\Omega)$
are endowed with the norms $\|\cdot\|_{L^p(\Omega)}$ and $\|\cdot\|_{W^{k,p}(\Omega)}$, respectively, and the notation
$(\cdot,\cdot)$ denotes the $L^2(\Omega)$ inner product. We denote by $H^{-1}
(\Omega)$ the dual space of $H_0^1(\Omega)$. For a Banach space $B$ (endowed with the norm $\|\cdot\|_B$), we define
$L^2(0,T;B) = \{u(t)\in B\ \mbox{for a.e. } t\in(0,T)\ \mbox{and } \|u\|_{L^2(0,T;B)}<\infty\}$,
and the norm is given by $\|u\|_{L^2(0,T;B)}=(\int_0^T\|u(t)\|_B^2\d t)^\frac{1}{2}$. The notation $(\cdot,\cdot)_{L^2(0,T;L^2(\Omega))}$
denotes the inner product in the space $L^2(0,T;L^2(\Omega))$.
Similarly, the space $H^1(0,T;B)$ denotes
$H^1(0,T;B)=\{u\in L^2(0,T;B): u'(t)\in L^2(0,T;B)\}$,
with its norm given by $\|u\|_{H^1(0,T;B)} = (\|u\|_{L^2(0,T;B)}^2 + \|u'(t)\|_{L^2(0,T;B)}^2)^\frac{1}{2}$,
{with the notation $'$ denoting the (weak) temporal derivative}. Further,
for any $s\geq 0$, we denote by $\dot H^s(\Omega)=\{v\in L^2(\Omega): {(-\Delta)^\frac{s}{2}v}\in L^2(\Omega)
\}$, where $\Delta$ being the Laplacian with a zero Dirichlet boundary condition and the fractional power $(-\Delta)^\frac{s}{2}$
is defined by the spectral decomposition \cite{Kato:1961}. The space $\dot H^s(\Omega)$ is equipped with the norm $\|v\|_{\dot H^s(\Omega)}=(\|v\|^2_{L^2(\Omega)}+\|{(-\Delta)^\frac{s}{2}}
v\|_{L^2(\Omega)}^2)^\frac{1}{2}$. Then $\dot H^0(\Omega)=L^2(\Omega)$, $\dot H^1(\Omega)=H_0^1(\Omega)$ and $\dot H^2(\Omega)=H^2(\Omega)
\cap H_0^1(\Omega)$.

\section{Well-posedness of the continuous problem}\label{sec:cont}
In this section, we formulate and analyze the continuous formulation of the reconstruction approach. To recover the
diffusion coefficient $q$, we employ the following output least-squares formulation with an $H^1(\Omega)$-penalty:
\begin{equation}\label{eqn:ob}
    \min_{q \in \Uad} J_\gamma(q;z^\delta)=\tfrac12   \|u( q) - z^\delta\|_{L^2(0,T;L^2(\Omega))}^2  + \tfrac\gamma2\|\nabla q \|_{L^2(\Omega) }^2,
\end{equation}
where $u(q)$ satisfies the variational problem
\begin{equation}\label{eqn:var}
   (\Dal u(q),v)+(q\nabla u(q),\nabla v) = (f,v),\quad \forall  v\in\dH1,t\in(0,T], \quad \mbox{with }\quad u(0)=u_0.
\end{equation}
The admissible set $\Uad$ for the diffusion coefficient $q(x)$ is given by
\begin{equation*}
    \Uad=\{ q\in H^1(\Omega):~~c_0\le q \le c_1 ~~\text{a.e. in}~~ \Omega\},
\end{equation*}
with constants $c_0,c_1\in\mathbb{R}$ and $0<c_0< c_1$. The $H^1(\Omega)$ seminorm penalty is suitable for recovering a
smooth diffusion coefficient. {In case of nonsmooth coefficients, alternative penalties, e.g., total variation,
should be employed; see Remark \ref{rmk:alternative} for further discussions.} The scalar $\gamma>0$ is the
regularization parameter, controlling the strength of the penalty \cite{ItoJin:2015}.
The dependence of the functional $J_\gamma$ on $z^\delta$ will be suppressed below whenever there is
no confusion. For the analysis in Sections \ref{sec:cont} and \ref{sec:fully}, we make the following assumption on
problem data. It is sufficient to ensure the existence of a unique solution $u(q)\in L^2(0,T;
H^1(\Omega))$ for any $q\in \Uad$ \cite{JinLazarovZhou:2019}.
\begin{assumption}\label{ass:data1}
$u_0\in {\dot H^1(\Omega)} $, $f\in L^2(0,T;L^2
(\Omega))$, and $z^\delta\in L^2(0,T;L^2(\Omega))$.
\end{assumption}

First we show the well-posedness of problem \eqref{eqn:ob}--\eqref{eqn:var}, which relies on a
continuity result for the parameter-to-state map $u(q)$. First, we recall a stability result on the
solution operator. Below, for any $q\in \Uad$, the operator $A(q):  {\dH1}\rightarrow H^{-1}(\Omega)$ is defined by
\begin{equation*}
  -\langle A(q) \fy, \psi \rangle = (q\nabla \fy, \nabla  \psi), \quad \forall~ {\fy,\psi\in  \dH1},
\end{equation*}
where $\langle\cdot,\cdot\rangle$ denotes the duality pairing between $H^{-1}(\Omega)$ and $\dH1$.
For any $\fy \in {\dot{H}^2(\Omega)}$, there holds $ A(q) \fy =  \nabla\cdot(q\nabla \fy) \in L^2(\Omega)$.
\begin{lemma}\label{lem:a-priori}
For any $q\in \Uad$, let $v$ solve
\begin{align*}
    \partial_t^\alpha v - A(q)  v = f,\quad \forall t\in (0,T],\quad \mbox{with } v(0) = 0.
\end{align*}
Then there holds
\begin{equation*}
  \|v\|_{L^2(0,T;H^1(\Omega))}^2 \leq c\|f\|_{L^2(0,T;H^{-1}(\Omega))}^2.
\end{equation*}
\end{lemma}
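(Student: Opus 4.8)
The plan is to derive the bound by a fractional energy argument: test the equation against the solution $v$ itself, integrate in time over $(0,T)$, and exploit two complementary mechanisms---the positivity of the Caputo derivative after integration in time, and the uniform coercivity of the bilinear form induced by $A(q)$, which follows from the lower bound $q\ge c_0$ built into $\Uad$.

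Concretely, I would start from the weak form: for a.e.\ $t\in(0,T)$ and all $\psi\in\dH1$, $\langle \partial_t^\alpha v(t),\psi\rangle + (q\nabla v(t),\nabla\psi) = \langle f(t),\psi\rangle$, and take $\psi=v(t)$. Integrating over $(0,T)$ yields
\begin{equation*}
\int_0^T \langle \partial_t^\alpha v, v\rangle\,\d t + \int_0^T (q\nabla v,\nabla v)\,\d t = \int_0^T \langle f, v\rangle\,\d t.
\end{equation*}
The crucial step is that, since $v(0)=0$, the first term is nonnegative,
\begin{equation*}
\int_0^T \langle \partial_t^\alpha v(t), v(t)\rangle\,\d t \ge 0.
\end{equation*}
I would justify this via the (continuous) fractional chain-rule inequality $\langle \partial_t^\alpha v, v\rangle \ge \tfrac12 \partial_t^\alpha \|v\|_{L^2(\Omega)}^2$ together with the elementary fact that $\int_0^T \partial_t^\alpha \phi\,\d t\ge 0$ for any nonnegative $\phi$ with $\phi(0)=0$ (which one checks by Fubini and an integration by parts against the integrated kernel $(T-s)^{1-\alpha}$), or equivalently by invoking the positive-definiteness of the completely monotone kernel $t^{-\alpha}/\Gamma(1-\alpha)$.

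For the elliptic term, coercivity gives $\int_0^T (q\nabla v,\nabla v)\,\d t \ge c_0\|\nabla v\|_{L^2(0,T;L^2(\Omega))}^2 \ge c\|v\|_{L^2(0,T;H^1(\Omega))}^2$, the last inequality by the Poincar\'e inequality on $\dH1$. Bounding the right-hand side by Cauchy--Schwarz in space and time, $\int_0^T \langle f,v\rangle\,\d t \le \|f\|_{L^2(0,T;H^{-1}(\Omega))}\|v\|_{L^2(0,T;H^1(\Omega))}$, and applying Young's inequality, I would absorb the factor of $\|v\|_{L^2(0,T;H^1(\Omega))}$ into the left-hand side to reach the claimed estimate, with a constant $c$ depending only on $c_0$, $\Omega$ and $\alpha$.

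The main obstacle is rigor under the weak regularity afforded by $f\in L^2(0,T;H^{-1}(\Omega))$: the pointwise chain-rule inequality and even the pointwise meaning of $\partial_t^\alpha v$ presuppose more temporal smoothness than is a priori available, and the choice $\psi=v$ must be legitimized. I expect to handle this by first establishing the estimate for smooth data $f$, for which the existence theory of \cite{JinLazarovZhou:2019} supplies a solution regular enough to justify all the manipulations, and then passing to the limit by density, using the continuity of the solution map and the fact that the constant $c$ is independent of the smoothness of $f$.
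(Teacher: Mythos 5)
Your proposal is correct and follows the same skeleton as the paper's proof: test the weak form with $v$, integrate over $(0,T)$, discard the time-fractional term by positivity, and conclude via coercivity ($q\ge c_0$), Poincar\'e, and Cauchy--Schwarz (the paper absorbs the right-hand side exactly as you do with Young's inequality). The one genuine point of divergence is how the nonnegativity of $\int_0^T\langle\partial_t^\alpha v,v\rangle\,\d t$ is certified. The paper observes that $v(0)=0$ makes the Caputo and Riemann--Liouville derivatives coincide, extends $v$ by zero outside $[0,T]$, and invokes the Fourier-transform-based positivity result of Li and Xu \cite[Lemma 2.3]{LiXu:2009}; this is the continuous analogue of the Parseval argument the paper later uses at the discrete level in \cref{lem:disc-pos}. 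You instead derive positivity from the pointwise Alikhanov-type inequality $\langle\partial_t^\alpha v,v\rangle\ge\tfrac12\partial_t^\alpha\|v\|_{L^2(\Omega)}^2$ combined with the elementary fact that $\int_0^T\partial_t^\alpha\phi\,\d t=\frac{1}{\Gamma(1-\alpha)}\int_0^T(T-s)^{-\alpha}\phi(s)\,\d s\ge0$ for $\phi\ge0$ with $\phi(0)=0$ (your Fubini computation checks out); amusingly, this is precisely the mechanism the paper deploys in discrete form in \cref{lem:cq-prod}, so your route is faithful to the paper's toolbox even where it departs from this particular proof. Your version is more self-contained but requires the Bochner-valued Alikhanov inequality and the smoothing-plus-density step you correctly flag, whereas the paper's citation to Li--Xu absorbs the regularity issues wholesale; your alternative remark about positive-definiteness of the completely monotone kernel is essentially the Li--Xu mechanism, so either branch of your argument closes the gap.
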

\begin{proof}
Taking $\phi=v$ in the weak formulation, and then integrating from $0$ to $T$ give
$$(\partial_t^\alpha v(t),v(t))_{L^2(0,T;L^2(\Omega))} + (q\nabla v,\nabla v)_{L^2(0,T;L^2(\Omega))}= (f,v)_{L^2(0,T;L^2(\Omega))}.$$
Since $v(0)= 0$, {the Caputo fractional derivative coincides with the Riemann-Liouville one \cite{KilbasSrivastavaTrujillo:2006},
and upon extending $v$ by $0$ outside $[0,T]$, it follows directly from \cite[Lemma 2.3]{LiXu:2009}}
that
\begin{equation*}
  (\partial_t^\alpha v(t),v(t))_{L^2(0,T;L^2(\Omega))}\geq 0,
\end{equation*}
and by Poincar\'{e}'s inequality and Cauchy-Schwarz inequality, we obtain the desired estimate.
\end{proof}

The next result gives the continuity of the parameter-to-state map.
\begin{lemma}\label{lem:conti-p2s}
If the sequence $\{q^n\}\subset \Uad$ converges to $q\in \Uad$ almost everywhere, then
\begin{equation*}
  \lim_{n\to\infty} \|u(q)-u(q^n)\|_{L^2(0,T;H^1(\Omega))} = 0.
\end{equation*}
\end{lemma}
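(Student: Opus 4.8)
The plan is to reduce the claim to a quantitative estimate on the difference $w^n := u(q)-u(q^n)$ and then invoke Lemma~\ref{lem:a-priori}. Subtracting the weak formulations \eqref{eqn:var} for $q$ and $q^n$, and using the definition of $A(\cdot)$ together with the fact that both states share the initial data $u_0$ and source $f$, one finds that $w^n$ solves
\begin{equation*}
\partial_t^\alpha w^n - A(q)w^n = \big(A(q)-A(q^n)\big)u(q^n), \quad t\in(0,T], \qquad w^n(0)=0 .
\end{equation*}
The source on the right is represented, in the $H^{-1}(\Omega)$--$\dH1$ duality, by the functional $\psi\mapsto -((q-q^n)\nabla u(q^n),\nabla\psi)$, whose $H^{-1}(\Omega)$ norm is bounded by $c\|(q-q^n)\nabla u(q^n)\|_{L^2(\Omega)}$ via Poincar\'e's inequality. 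Hence Lemma~\ref{lem:a-priori} gives
\begin{equation*}
\|w^n\|_{L^2(0,T;H^1(\Omega))}^2 \le c\,\|(q-q^n)\nabla u(q^n)\|_{L^2(0,T;L^2(\Omega))}^2 ,
\end{equation*}
so it suffices to show that the right-hand side tends to $0$.

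First I would record a uniform stability bound. Since every $q^n$ satisfies $c_0\le q^n\le c_1$, the coercivity and continuity constants of $A(q^n)$ are independent of $n$, so the (nonhomogeneous) analogue of Lemma~\ref{lem:a-priori} yields $\|u(q^n)\|_{L^2(0,T;H^1(\Omega))}\le C$ with $C$ depending only on $c_0$, $f$ and $u_0$. In particular $\nabla u(q^n)$ is bounded in $L^2(0,T;L^2(\Omega))$ and, along a subsequence, converges weakly there. To identify the weak limit as $\nabla u(q)$, I would pass to the limit in \eqref{eqn:var}: the elliptic term is a product of $q^n\to q$ a.e. (uniformly bounded by $c_1$) with the weakly convergent $\nabla u(q^n)$, and the standard lemma on such products shows it converges weakly to $q\nabla u(q)$; the fractional term passes to the limit by linearity. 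Uniqueness of the forward solution then forces the whole sequence to converge weakly, $u(q^n)\rightharpoonup u(q)$ in $L^2(0,T;H^1(\Omega))$.

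The main obstacle is that this weak convergence of gradients, together with the merely a.e.\ (not uniform) convergence of $q^n$, does not by itself force $\|(q-q^n)\nabla u(q^n)\|_{L^2(0,T;L^2(\Omega))}\to 0$. Indeed, $(q-q^n)^2\to 0$ a.e.\ and is bounded, but $|\nabla u(q^n)|^2$ is only bounded in $L^1$ and depends on $n$, so the gradients could in principle concentrate on the shrinking sets where $q^n$ has not yet equilibrated, and the product need not integrate to zero. The cross terms are harmless---writing $\nabla u(q^n)=\nabla u(q)-\nabla w^n$, the contribution involving the fixed $\nabla u(q)$ vanishes by dominated convergence---so the entire difficulty sits in controlling $\int_0^T\!\!\int_\Omega (q-q^n)^2|\nabla u(q^n)|^2$, and what is truly needed is some equi-integrability of $\{|\nabla u(q^n)|^2\}$.

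To supply it, I would establish a uniform higher-integrability estimate for the gradients. Using uniform maximal $L^2$-regularity for the subdiffusion equation (whose constants depend on $A(q^n)$ only through its uniform sectoriality, hence are uniform over $\Uad$), one gets $\nabla\!\cdot(q^n\nabla u(q^n))=\partial_t^\alpha u(q^n)-f$ bounded in $L^2(0,T;L^2(\Omega))$; a Meyers-type estimate for the divergence-form operator then yields $\nabla u(q^n)$ bounded in $L^2(0,T;L^{2+\epsilon}(\Omega))$, with $\epsilon>0$ depending only on the ellipticity ratio $c_1/c_0$ and the dimension $d$, hence uniform in $n$. Consequently $\{|\nabla u(q^n)|^2\}$ is bounded in $L^{1+\epsilon/2}$ and thus equi-integrable, and Egorov's theorem combined with $(q-q^n)^2\to 0$ a.e.\ closes the argument. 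An alternative to this higher-integrability route is a compactness argument: a uniform $L^2(0,T;\dot H^{1+s}(\Omega))$ bound for some $s>0$, together with the bound on $\partial_t^\alpha u(q^n)$ in $L^2(0,T;H^{-1}(\Omega))$, would via a fractional Aubin--Lions lemma and the compact embedding $\dot H^{1+s}(\Omega)\hookrightarrow\hookrightarrow H^1(\Omega)$ give strong convergence of $u(q^n)$ in $L^2(0,T;H^1(\Omega))$ directly. In either case the technical heart is the uniform (in $n$) gain of spatial regularity for the variable-coefficient operator $A(q^n)$.
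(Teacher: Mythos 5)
Your reduction of the lemma to showing $\|(q-q^n)\nabla u(q^n)\|_{L^2(0,T;L^2(\Omega))}\to 0$ is correct as far as it goes, but the ``main obstacle'' on which you spend the bulk of the argument is self-inflicted: it comes from freezing the \emph{limit} coefficient $q$ in the principal operator. The paper freezes $q^n$ instead, writing
\begin{equation*}
\partial_t^\alpha v^n - \nabla\cdot(q^n\nabla v^n) = \nabla\cdot\big((q-q^n)\nabla u(q)\big),\qquad v^n(0)=0,
\end{equation*}
so that the commutator source carries the \emph{fixed} gradient $\nabla u(q)$ rather than the $n$-dependent $\nabla u(q^n)$. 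Since the constant in \cref{lem:a-priori} depends on the admissible set only through the coercivity bound $c_0$ and is therefore uniform in $n$, this gives $\|v^n\|_{L^2(0,T;H^1(\Omega))}\le c\|(q-q^n)\nabla u(q)\|_{L^2(0,T;L^2(\Omega))}$, and the right-hand side tends to zero by dominated convergence alone: $\phi^n=|q-q^n|^2\int_0^T|\nabla u(q)|^2\,\d t\to 0$ a.e.\ and $0\le\phi^n\le 4c_1^2\int_0^T|\nabla u(q)|^2\,\d t\in L^1(\Omega)$. With this splitting no equi-integrability of $\{|\nabla u(q^n)|^2\}$, no higher regularity, and in fact none of your weak-convergence identification paragraph (which your final argument never uses) is needed.

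As for the machinery you substitute: the Meyers/maximal-regularity route could probably be pushed through---since $q-q^n$ is time-independent you can integrate in time first and apply a Vitali-type argument to $G^n=\int_0^T|\nabla u(q^n)|^2\,\d t$, which your $L^2(0,T;L^{2+\epsilon}(\Omega))$ bound controls in $L^{1+\epsilon/2}(\Omega)$ via Minkowski's integral inequality---but as written it rests on two substantial unproved ingredients: uniform maximal $L^2$-regularity for the time-fractional problem under only \cref{ass:data1} (the homogeneous part needs a smoothing estimate of the form $\|A(q^n)E_n(t)u_0\|_{L^2(\Omega)}\le ct^{-\alpha/2}\|u_0\|_{\dot H^1(\Omega)}$, the inhomogeneous part a Zacher-type maximal regularity theorem with constants controlled by uniform sectoriality), and a slice-wise Meyers estimate after rewriting an $L^2(\Omega)$ right-hand side in divergence form. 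Your alternative route is on genuinely shaky ground: a uniform $L^2(0,T;\dot H^{1+s}(\Omega))$ bound is doubtful because the coefficients $q^n$ lie only in $H^1(\Omega)\cap L^\infty(\Omega)$, for which elliptic regularity beyond $H^1$ is delicate---this is exactly the deficiency that forces one to Meyers-type estimates in the first place. In short, your approach is salvageable along its main branch but replaces with heavy analytic technology what the right algebraic decomposition turns into a three-line argument.
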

\begin{proof}
Let $v^n = u(q)-u(q^n)$. Then it satisfies $v^n(0)=0$ and
\begin{equation*}
  \partial_t^\alpha v^n - \nabla\cdot (q^n\nabla v^n) = \nabla\cdot((q-q^n)\nabla u(q)),\quad \forall t\in(0,T].
\end{equation*}
Then by  \cref{lem:a-priori} and the definition of the $H^{-1}(\Omega)$, we obtain
\begin{align*}
  \|v^n\|_{L^2(0,T;H^1(\Omega))} & \leq c\|\nabla\cdot((q-q^n)\nabla u(q))\|_{L^2(0,T;H^{-1}(\Omega))}\\
    & \leq c\|(q-q^n)\nabla u(q)\|_{L^2(0,T;L^2(\Omega))}.
\end{align*}
{Let $\phi^n = |q-q^n|^2 \int_0^T |\nabla u(q)|^2 \,\d t$, then $\phi^n\rightarrow0$ almost everywhere (a.e.),
since $q^n\to q$ a.e., and further, since $q, q^n \in \mathcal{A}$, we have
$0\leq \phi^n \le 4c_1^2   \int_0^T |\nabla u(q)|^2 \,\d t \in L^1(\Omega).$ Then, Lebesgue's dominated
convergence theorem \cite[Theorem 1.9]{EvansGariepy:2015} implies
\begin{equation*}
\lim_{n\rightarrow\infty} \|(q-q^n)\nabla u(q)\|_{L^2(0,T;L^2(\Omega))}^2 = \lim_{n\rightarrow\infty} \int_\Omega \phi^n(x)\,\d x
 = \int_\Omega  \lim_{n\rightarrow\infty} \phi^n(x)\,\d x = 0,
\end{equation*}}
which shows the desired estimate.
\end{proof}

The next result gives the existence of a minimizer. {With Lemma \ref{lem:conti-p2s} at hand,
the result follows by a standard compactness argument in calculus of variation (see, e.g., \cite{EnglHankeNeubauer:1996,ItoJin:2015}),
and the proof is included only for completeness.}
\begin{theorem}\label{thm:ex}
Under \cref{ass:data1}, there exists at least one minimizer to \eqref{eqn:ob}--\eqref{eqn:var}.
\end{theorem}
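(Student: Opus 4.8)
The plan is to apply the direct method of the calculus of variations. Since $J_\gamma\ge 0$, the infimum $m:=\inf_{q\in\Uad}J_\gamma(q)$ is finite and nonnegative, so I may pick a minimizing sequence $\{q^n\}\subset\Uad$ with $J_\gamma(q^n)\to m$. The first task is to extract uniform bounds. The pointwise constraint $c_0\le q^n\le c_1$ a.e.\ immediately bounds $\|q^n\|_{L^2(\Omega)}$ by $c_1|\Omega|^{1/2}$, while the convergence of $J_\gamma(q^n)$ controls the penalty, giving $\tfrac\gamma2\|\nabla q^n\|_{L^2(\Omega)}^2\le J_\gamma(q^n)\le c$. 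Together these show that $\{q^n\}$ is bounded in $H^1(\Omega)$.

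Next I would exploit compactness. Because $H^1(\Omega)$ is a Hilbert space, boundedness yields a subsequence (not relabeled) with $q^n\rightharpoonup q^*$ weakly in $H^1(\Omega)$; and by the compact Rellich--Kondrachov embedding $H^1(\Omega)\hookrightarrow\hookrightarrow L^2(\Omega)$, the same subsequence converges strongly in $L^2(\Omega)$, and hence, after passing to a further subsequence, almost everywhere in $\Omega$. The a.e.\ limit inherits the box constraints, so $c_0\le q^*\le c_1$ a.e.; combined with $q^*\in H^1(\Omega)$ this gives $q^*\in\Uad$, confirming that the candidate minimizer is admissible.

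It then remains to pass to the limit in the two terms of $J_\gamma$. For the data-fidelity term I invoke the continuity of the parameter-to-state map: since $q^n\to q^*$ a.e., \cref{lem:conti-p2s} gives $u(q^n)\to u(q^*)$ in $L^2(0,T;H^1(\Omega))$, hence in $L^2(0,T;L^2(\Omega))$, so the fidelity term converges to $\tfrac12\|u(q^*)-z^\delta\|_{L^2(0,T;L^2(\Omega))}^2$. For the penalty, the map $q\mapsto\|\nabla q\|_{L^2(\Omega)}^2$ is convex and continuous on $H^1(\Omega)$, hence weakly lower semicontinuous, so $\|\nabla q^*\|_{L^2(\Omega)}^2\le\liminf_n\|\nabla q^n\|_{L^2(\Omega)}^2$. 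Adding the two estimates yields $J_\gamma(q^*)\le\liminf_n J_\gamma(q^n)=m$, and since $q^*\in\Uad$ forces $J_\gamma(q^*)\ge m$, I conclude $J_\gamma(q^*)=m$, i.e., $q^*$ is a minimizer.

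I expect no serious obstacle here: the argument is the textbook compactness-plus-lower-semicontinuity scheme, and its only non-routine ingredient---the stability of the forward map under a.e.\ convergence of the coefficient---has already been supplied by \cref{lem:conti-p2s}. The one point that requires care is the order of the two compactness reductions: weak $H^1(\Omega)$ convergence alone does not imply a.e.\ convergence, so it is essential to route through the compact embedding into $L^2(\Omega)$ and extract an a.e.-convergent subsequence before \cref{lem:conti-p2s} can be applied, while the weak $H^1(\Omega)$ limit is simultaneously retained to obtain the lower semicontinuity of the gradient penalty.
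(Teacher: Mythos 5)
Your proof is correct and follows essentially the same route as the paper: a minimizing sequence bounded in $H^1(\Omega)$ via the penalty and box constraints, compact embedding to extract an a.e.-convergent subsequence, \cref{lem:conti-p2s} for the fidelity term, and weak lower semicontinuity for the penalty. The only cosmetic difference is that you route through the compact embedding into $L^2(\Omega)$ where the paper uses $L^1(\Omega)$, which is immaterial since both yield the a.e.-convergent subsequence needed for \cref{lem:conti-p2s}.
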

\begin{proof}
Since the functional $J_\gamma$ is bounded from below by zero, there exists a minimizing sequence $\{q^n\}_{n\geq1}\subset\Uad$
such that $\lim_{n\to\infty} J_\gamma(q^n)=\inf_{q\in \Uad}J_\gamma(q)$. Thus, the
sequence $\{q^n\}_{n\geq1}$ is uniformly bounded in the $H^1(\Omega)$ seminorm, which together with the box constraint
$q^n\in \Uad $, implies that it is also uniformly bounded in $H^1(\Omega)$. Thus there exists
a subsequence, still denoted by $\{q^n\}_{n\geq 1}$ that converges to some $q^*\in \Uad$ weakly in $H^1(\Omega)$,
and by compact Sobolev embedding theorem \cite{EvansGariepy:2015}, converges also in $L^1(\Omega)$. Further, by standard measure
theory, convergence in $L^1(\Omega)$ implies almost everywhere convergence up to a subsequence \cite[Theorem 1.21, p.
29]{EvansGariepy:2015}. Thus, we may assume that the subsequence $\{q^n\}_{n\geq1}$ converges to $q^*$ in $L^1(\Omega)$
and almost everywhere. {Then by \cref{lem:conti-p2s}, for the sequence $\{u(q^n)\}_{n\geq 1}$ of solutions to
problem \eqref{eqn:fde}, there holds $\lim_{n\to\infty}\|u(q^n)-u(q^*)\|_{L^2(0,T;H^1(\Omega))}=0$.} Then by
Sobolev embedding \cite{AdamsFournier:2003}, $\lim_{n\to\infty} \|u(q^n)-z^\delta\|_{L^2(0,T;L^2(\Omega))}^2 = \|u(q^*)-z^\delta\|_{L^2(0,T;L^2(\Omega))}^2.$
This and weak lower semi-continuity of semi-norms imply that $q^*$ is a minimizer to \eqref{eqn:ob}.
\end{proof}

The following continuous dependence results hold, where the minimum $H^1(\Omega)$-seminorm solution refers to
the solution $q^\dag$ of the minimum $H^1(\Omega)$ seminorm among all solutions within the admissible set $\mathcal{A}$
corresponding to the exact data $z^\dag=u(q^\dag)$. The proof follows by a standard argument (see, e.g.,
\cite{EnglHankeNeubauer:1996,ItoJin:2015}), and thus is omitted.
\begin{theorem}\label{thm:stability}
Under \cref{ass:data1}, the following statements hold.
\begin{itemize}
  \item[$\rm(i)$] Let $\gamma>0$ be fixed, the sequence $\{z_j\}_{j\geq 1}$ be convergent to some data $z$ in $L^2(0,T;L^2(\Omega))$, and $q_j^*\in\Uad$
  the corresponding minimizer to $J_\gamma(\cdot;z_j)$. Then $\{q_j^*\}_{j\geq1}$
  contains a subsequence convergent to a minimizer of $J_\gamma(\cdot;z)$ over $\mathcal{A}$ in $H^1(\Omega)$.
  \item[$\rm(ii)$] Let $\{\delta_j\}_{j\geq1}\subset \mathbb{R}_+$ with $\delta_j\to0^+$, $\{z^{\delta_j}\}_{j\geq1}
  \subset L^2(0,T;L^2(\Omega))$ be a sequence satisfying $\|z^{\delta_j}-z^\dag\|_{L^2(0,T;L^2(\Omega))}
  =\delta_j$ for some exact data $z^\dag=u(q^\dag)$, and $q_j^*$ be a minimizer to $J_{\gamma_j}
  (\cdot;z^{\delta_j})$ over $\mathcal{A}$. If $\{\gamma_j\}_{j\geq1}$ satisfies
  $\lim_{j\to\infty}\gamma_j =\lim_{j\to \infty}\frac{\delta_j^2}{\gamma_j} =0$,
      then $\{q_j^*\}_{j\geq 1}$ contains a subsequence converging to a minimum-$H^1(\Omega)$ seminorm solution
       in $H^1(\Omega)$.
\end{itemize}
\end{theorem}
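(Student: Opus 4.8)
The plan is to handle both statements with the standard variational argument for Tikhonov-type regularization, namely a compactness step to extract a convergent subsequence followed by a lower-semicontinuity step to pass to the limit, with \cref{lem:conti-p2s} used to carry limits through the nonlinear parameter-to-state map; the only genuinely problem-specific ingredient, continuity of $q\mapsto u(q)$ under almost everywhere convergence, has already been established there, so the remaining work is essentially abstract.

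For part $\rm(i)$, I would first fix any $\bar q\in\Uad$ and use minimality, $J_\gamma(q_j^*;z_j)\le J_\gamma(\bar q;z_j)$; since $\{z_j\}$ converges it is bounded in $L^2(0,T;L^2(\Omega))$, so the right-hand side is bounded uniformly in $j$, which bounds the seminorm $\|\nabla q_j^*\|_{L^2(\Omega)}$. Together with the box constraint in $\Uad$ this gives a uniform $H^1(\Omega)$ bound, so along a subsequence $q_j^*\rightharpoonup q^*$ weakly in $H^1(\Omega)$; the compact embedding $H^1(\Omega)\hookrightarrow L^2(\Omega)$ then yields strong $L^2(\Omega)$ convergence and hence, along a further subsequence, convergence almost everywhere, while the pointwise box bounds and weak closedness give $q^*\in\Uad$. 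Now \cref{lem:conti-p2s} applies and gives $u(q_j^*)\to u(q^*)$ in $L^2(0,T;H^1(\Omega))$, so the fidelity norms converge, $\|u(q_j^*)-z_j\|_{L^2(0,T;L^2(\Omega))}\to\|u(q^*)-z\|_{L^2(0,T;L^2(\Omega))}$; combining this with weak lower semicontinuity of $q\mapsto\|\nabla q\|_{L^2(\Omega)}$ and passing to the limit in $J_\gamma(q_j^*;z_j)\le J_\gamma(q;z_j)$ (for arbitrary $q\in\Uad$) yields $J_\gamma(q^*;z)\le J_\gamma(q;z)$, so $q^*$ minimizes $J_\gamma(\cdot;z)$.

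The one nonroutine point is upgrading weak to strong convergence in $H^1(\Omega)$, which I would obtain by norm convergence: the inequalities above force $J_\gamma(q_j^*;z_j)\to J_\gamma(q^*;z)$, and since the fidelity parts already converge the penalty parts must converge too, i.e. $\|\nabla q_j^*\|_{L^2(\Omega)}\to\|\nabla q^*\|_{L^2(\Omega)}$. In the Hilbert space $L^2(\Omega)$, weak convergence $\nabla q_j^*\rightharpoonup\nabla q^*$ together with convergence of norms gives strong convergence of the gradients, which with the strong $L^2(\Omega)$ convergence of $q_j^*$ itself yields strong convergence in $H^1(\Omega)$.

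For part $\rm(ii)$ the template is the same, but I would anchor the comparison at the exact minimum-seminorm solution $q^\dag$. Minimality and $u(q^\dag)=z^\dag$, $\|z^{\delta_j}-z^\dag\|_{L^2(0,T;L^2(\Omega))}=\delta_j$ give
\[
\tfrac12\|u(q_j^*)-z^{\delta_j}\|_{L^2(0,T;L^2(\Omega))}^2+\tfrac{\gamma_j}{2}\|\nabla q_j^*\|_{L^2(\Omega)}^2
\le \tfrac12\delta_j^2+\tfrac{\gamma_j}{2}\|\nabla q^\dag\|_{L^2(\Omega)}^2.
\]
Dividing by $\gamma_j$ and using $\delta_j^2/\gamma_j\to0$ gives $\limsup_j\|\nabla q_j^*\|_{L^2(\Omega)}\le\|\nabla q^\dag\|_{L^2(\Omega)}$, while $\gamma_j\to0$ forces the fidelity term to vanish, so $u(q_j^*)\to z^\dag$ in $L^2(0,T;L^2(\Omega))$. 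As in $\rm(i)$ I extract a weak $H^1(\Omega)$ limit $q^*\in\Uad$ with almost everywhere convergence, and \cref{lem:conti-p2s} identifies $u(q^*)=z^\dag$, so $q^*$ is a solution. The parameter choice rule is exactly what transfers minimality to the limit: weak lower semicontinuity and the $\limsup$ bound give $\|\nabla q^*\|_{L^2(\Omega)}\le\|\nabla q^\dag\|_{L^2(\Omega)}$, and since $q^\dag$ has minimal seminorm among solutions the reverse inequality also holds, so $q^*$ is itself a minimum-seminorm solution; the norm-convergence argument of the previous paragraph then upgrades the convergence to strong in $H^1(\Omega)$.
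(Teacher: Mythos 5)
Your proof is correct and follows precisely the standard Tikhonov stability/convergence argument that the paper itself invokes (it omits the proof, citing \cite{EnglHankeNeubauer:1996,ItoJin:2015}): compactness from the uniform $H^1(\Omega)$ bound, a.e.\ convergence along a subsequence feeding into \cref{lem:conti-p2s}, weak lower semicontinuity to transfer minimality, and the norm-convergence trick in the Hilbert space $L^2(\Omega)$ to upgrade weak to strong $H^1(\Omega)$ convergence. In particular, your handling of the two nonroutine points — the identification $u(q^*)=z^\dag$ via \cref{lem:conti-p2s} in part (ii) and the squeeze $\|\nabla q^*\|_{L^2(\Omega)}=\|\nabla q^\dag\|_{L^2(\Omega)}$ from the parameter choice rule — is exactly what the cited standard argument requires, so there is nothing to add.
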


\section{Numerical approximation and convergence analysis}\label{sec:fully}

Now we describe the discretization of problem \eqref{eqn:ob}--\eqref{eqn:var}
and show the convergence of the approximations.

\subsection{Numerical approximation}
First, we describe a spatially semidiscrete scheme for problem \eqref{eqn:fde} based on the
Galerkin FEM; see \cite{JinLazarovZhou:2019} for a recent overview on the numerical approximation
of the subdiffusion model. Let $\mathcal{T}_h$ be a shape regular quasi-uniform triangulation of the
domain $\Omega $ into $d$-simplexes, denoted by $T$, with a mesh size $h\in (0,1)$. Over $\mathcal{T}_h$,
we define a continuous piecewise linear finite element space $X_h$ by
\begin{equation*}
  X_h= \left\{v_h\in  {\dH1}:\ v_h|_T \mbox{ is a linear function}\ \forall\, T \in \mathcal{T}_h\right\},
\end{equation*}
and similarly the space $V_h$ by
\begin{equation*}
  V_h= \left\{v_h\in H^1(\Omega):\ v_h|_T \mbox{ is a linear function}\ \forall\, T \in \mathcal{T}_h\right\}.
\end{equation*}
The spaces $X_h$ and $V_h$ will be employed to approximate the state $u$ and the diffusion coefficient
$q$, respectively. We define the $L^2(\Omega)$ projection $P_h:L^2(\Omega)\to X_h$ by
\begin{equation*}
     (P_h \varphi,\chi) =(\varphi,\chi) , \quad \forall\, \chi\in X_h.
\end{equation*}
Note that the operator $P_h$ satisfies the following error estimate: for any $s\in[1,2]$,
\begin{equation*}
  \|P_h\varphi-\varphi\|_{L^2(\Omega)} + h\|\nabla(P_h\varphi-\varphi)\|_{L^2(\Omega)}\leq h^s\|\varphi\|_{\dot H^s(\Omega)},\quad \forall \varphi\in \dot H^s(\Omega).
\end{equation*}
Let $\mathcal{I}_h$ be the interpolation operator associated with the finite element space $V_h$. Then it has
the following error estimates for $s=1,2$ (see e.g., \cite[Theorem 1.103]{ern-guermond}):
\begin{align}
  \|v-\mathcal{I}_hv\|_{L^2(\Omega)} + h\|v-\mathcal{I}_hv\|_{H^1(\Omega)} & \leq ch^2 \|v\|_{H^2(\Omega)}, \quad \forall v\in H^2(\Omega),\label{eqn:int-err-2}\\
  \|v-\mathcal{I}_hv\|_{L^\infty(\Omega)} + h\|v-\mathcal{I}_hv\|_{W^{1,\infty}(\Omega)} & \leq {ch^s \|v\|_{W^{s,\infty}(\Omega)},} \quad \forall v\in W^{s,\infty}(\Omega).\label{eqn:int-err-inf}
\end{align}

Now we partition the time interval $[0,T]$ uniformly, with grid points $t_n=n\tau$, $n=0,\ldots,N$, and
a time step size $\tau=T/N$. The fully discrete scheme for problem \eqref{eqn:fde} reads: Given $U_h^0=P_hu_0\in X_h$, find $U_h^n\in X_h$ such that
\begin{align}\label{eqn:fully-0}
  (\bar \partial_\tau^\alpha (U_h^n-U_h^0),\chi)+(q \nabla U_h^n, \nabla \chi)= (f^n,\chi),\quad \forall\chi\in X_h, \quad \quad n=1,2,\ldots,N,
\end{align}
where $f^n=\frac1\tau \int_{t_{n-1}}^{t_n}  f(s)\,\d s$ and $\bar\partial_\tau^\alpha \varphi^n$ denotes the
backward Euler convolution quadrature (CQ) approximation (with $\varphi^j=\varphi(t_j)$):
\begin{equation}\label{eqn:CQ-BE}
  \bar\partial_\tau^\alpha \varphi^n = \tau^{-\alpha} \sum_{j=0}^nb_j^{(\alpha)}\varphi^{n-j},\quad\mbox{ with } (1-\xi)^\alpha=\sum_{j=0}^\infty b_j^{(\alpha)}\xi^j.
\end{equation}
{Note that the weights $b_j^{(\alpha)}$ are given explicitly by
$b_j^{(\alpha)} = (-1)^j\frac{\Gamma(\alpha+1)}{\Gamma(\alpha-j+1)\Gamma(j+1)}$, and thus
\begin{equation*}
 b_j^{(\alpha)} = (-1)^j{\frac{\alpha(\alpha-1)\cdots(\alpha-j+1)}{j!}},\quad j\geq 1,
\end{equation*}
from which it can be verified directly that $b_0^{(\alpha)}=1$ and $b_j^{(\alpha)}<0$ for $j\geq 1$.
Similarly, one deduces $b_j^{(\alpha-1)}>0$, for $j=0,1,\ldots$. Meanwhile, by definition, we have (with $\varphi^0=0$)
\begin{equation*}
\bar\partial_\tau^{\alpha-1}\bar\partial_\tau \varphi^n = \tau^{-\alpha}\sum_{j=1}^n b_{n-j}^{(\alpha-1)} (\varphi^{j} - \varphi^{j-1})
= \tau^{-\alpha}\Big(  b_{0}^{(\alpha-1)} \varphi^{n} + \sum_{j=1}^{n-1} (b_{n-j}^{(\alpha-1)}- b_{n-j-1}^{(\alpha-1)}) \varphi^{j} \Big).
\end{equation*}
Direct computation shows $b_{j}^{(\alpha-1)} - b_{j-1}^{(\alpha-1)} = b_{j}^{(\alpha)}$. This and the fact
$b_{0}^{(\alpha-1)} = b_{0}^{(\alpha)} = 1$ shows the following associativity of convolution quadrature (with $\varphi^0=0$)
\begin{equation}\label{eqn:cq-ass}
 \bar\partial_\tau^{\alpha-1}\bar\partial_\tau \varphi^n  =   \bar\partial_\tau^{\alpha}  \varphi^{n}.
\end{equation}
}
Upon letting the discrete operator $A_h(q):X_h\to X_h$ by $-(A_h(q)v_h,\chi)=(q\nabla v_h,\nabla \chi)$
for all $v_h,\chi\in X_h$, the fully discrete scheme \eqref{eqn:fully-0} can be rewritten as
\begin{equation*}
  \bar \partial_\tau^\alpha (U_h^n-U_h^0)-{A_h(q) U_h^n} = P_hf^n, \quad n=1,2,\ldots,N.
\end{equation*}

Now we can formulate the finite element discretization of problem \eqref{eqn:ob}--\eqref{eqn:var}:
\begin{equation}\label{eqn:ob-disc}
    \min_{q_h \in \Uad_h} J_{\gamma,h,\tau}(q_h)=\frac\tau2 \sum_{n=1}^N \int_\Omega |U_h^n(q_h) - z_n^\delta|^2 \,\d x   + \frac\gamma2\|\nabla q_h \|_{L^2(\Omega) }^2,
\end{equation}
with $z_n^\delta=\tau^{-1}\int_{t_{n-1}}^{t_n}z^\delta \d t$, and $U_h^n(q_h)$ satisfying $U_h^0(q_h)=P_hu_0$ and
\begin{align}\label{eqn:fully}
  \bar \partial_\tau^\alpha (U_h^n(q_h)-U_h^0) - {A_h(q_h) U_h^n(q_h)} = P_hf^n, \quad n=1,2,\ldots,N.
\end{align}
The discrete admissible set $\Uad_h$ is taken to be
\begin{equation*}
    \Uad_h=\{ q_h\in V_h:{c_0}\le q_h(x) \le {c_1}\quad \text{in}~~\Omega  \}.
\end{equation*}
Clearly, $\Uad_h=\Uad\cap V_h$. Problem \eqref{eqn:ob-disc}--\eqref{eqn:fully} is a finite-dimensional
nonlinear optimization problem with PDE and box constraint, and can be solved efficiently.
The analysis of problem \eqref{eqn:ob-disc}--\eqref{eqn:fully}
is the main focus of Sections \ref{ssec:conv} and \ref{sec:err}.

\subsection{Existence and convergence}\label{ssec:conv}
This part is devoted to the convergence analysis of the discrete approximations given by
the scheme \eqref{eqn:ob-disc}--\eqref{eqn:fully} to the continuous formulation \eqref{eqn:ob}--\eqref{eqn:var}.
We begin with some \textit{a priori} estimate on the solutions of the time-stepping scheme \eqref{eqn:fully-0}.
The proof relies on positivity of CQ.
\begin{lemma}\label{lem:disc-pos}
Let $V_{h}^n\in X_h$ solve
\begin{align*}
    (\bar \partial_\tau^\alpha V_h^n,\chi)+(q_h \nabla V_h^n, \nabla \chi) & = (f_h^n,\chi),\quad \forall \chi\in X_h, \quad n=1,2,\ldots,N,
\end{align*}
with $V_h^0 = 0$. Then there holds
\begin{equation*}
   \tau\sum_{n=1}^N (\nabla V_h^n, \nabla V_h^n) \leq c\tau \sum_{n=1}^N (f_h^n,V_h^n) .
\end{equation*}
\end{lemma}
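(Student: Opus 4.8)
The plan is to carry out a discrete energy argument mirroring the proof of \cref{lem:a-priori}, with the positivity of the continuous Caputo derivative replaced by the positivity of its backward Euler convolution quadrature approximation. First I would test the scheme with the admissible choice $\chi = V_h^n \in X_h$, which gives, for each $n=1,\ldots,N$,
\[
(\bar\partial_\tau^\alpha V_h^n, V_h^n) + (q_h \nabla V_h^n, \nabla V_h^n) = (f_h^n, V_h^n).
\]
Multiplying by $\tau$ and summing over $n$ yields
\[
\tau\sum_{n=1}^N (\bar\partial_\tau^\alpha V_h^n, V_h^n) + \tau\sum_{n=1}^N (q_h \nabla V_h^n, \nabla V_h^n) = \tau\sum_{n=1}^N (f_h^n, V_h^n).
\]

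The crux is the discrete positivity $\tau\sum_{n=1}^N (\bar\partial_\tau^\alpha V_h^n, V_h^n) \geq 0$, the exact analogue of the bound $(\partial_t^\alpha v, v)_{L^2(0,T;L^2(\Omega))} \geq 0$ used in \cref{lem:a-priori}. Granting it, I drop this nonnegative term and invoke the uniform lower bound $q_h \geq c_0 > 0$ built into $\Uad_h$, so that $(q_h \nabla V_h^n, \nabla V_h^n) \geq c_0 (\nabla V_h^n, \nabla V_h^n)$; this gives
\[
c_0\, \tau\sum_{n=1}^N (\nabla V_h^n, \nabla V_h^n) \leq \tau\sum_{n=1}^N (f_h^n, V_h^n),
\]
and dividing through by $c_0$ delivers the claim with $c = c_0^{-1}$.

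The main obstacle, therefore, is establishing this discrete positivity. Using $V_h^0 = 0$ in the definition \eqref{eqn:CQ-BE} and reindexing by $m = n-j$, the quantity in question equals the causal Toeplitz form $\tau^{1-\alpha}\sum_{n=1}^N \sum_{m=1}^n b_{n-m}^{(\alpha)} (V_h^m, V_h^n)$, generated on the unit circle by the symbol $(1-e^{\mathrm{i}\theta})^\alpha$. Writing $1 - e^{\mathrm{i}\theta} = 2\sin(\theta/2)\, e^{\mathrm{i}(\theta-\pi)/2}$ for $\theta \in (0,2\pi)$, the argument of $(1-e^{\mathrm{i}\theta})^\alpha$ lies in $(-\alpha\pi/2, \alpha\pi/2) \subset (-\pi/2, \pi/2)$ because $\alpha \in (0,1)$, so its real part is nonnegative; by the Parseval/Bochner characterization of positive-type sequences, a scalar symbol with nonnegative real part makes the associated Hilbert-space-valued quadratic form nonnegative, which is exactly the desired inequality. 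The weight sign facts recorded above \eqref{eqn:cq-ass} (namely $b_0^{(\alpha)}=1$, $b_j^{(\alpha)}<0$ for $j\geq 1$, and $b_j^{(\alpha-1)}>0$) are precisely the structural inputs underlying this positivity, and I would either reproduce this short Fourier computation or cite the standard positivity of convolution quadrature generated by the sectorial symbol $s^\alpha$. Everything apart from this positivity step is routine.
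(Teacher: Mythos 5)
Your proposal is correct and follows essentially the same route as the paper: test with $\chi=V_h^n$, sum in $n$, and reduce everything to the nonnegativity of the quadratic form $\tau\sum_{n=1}^N(\bar\partial_\tau^\alpha V_h^n,V_h^n)$, which the paper also proves by zero-extension, Parseval, and the observation that $\Re\big(1-e^{-\mathrm{i}\zeta}\big)^\alpha\ge 0$ since $\alpha\in(0,1)$ keeps the argument in $(-\alpha\pi/2,\alpha\pi/2)$ --- precisely your Toeplitz-symbol computation. The only cosmetic difference is that you conclude directly from $q_h\ge c_0$ with $c=c_0^{-1}$ (which suffices, and is arguably cleaner than the paper's invocation of Cauchy--Schwarz and Poincar\'e), and your side remark that the sign pattern of $b_j^{(\alpha)}$ underlies the positivity is unnecessary but harmless.
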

\begin{proof}
Upon letting $\chi=V_h^n\in X_h$ and then summing over $n$ leads to
\begin{equation*}
  \tau \sum_{n=1}^N(\bar\partial_\tau^\alpha V_h^n,V_h^n) + \tau\sum_{n=1}^N (q_h\nabla V_h^n,{\nabla}V_h^n) = \tau \sum_{n=1}^N (f_h^n,V_h^n).
\end{equation*}
Now we shall show that the first term on the left hand side is nonnegative. To this end, we extend
$\{ V_h^n \}_{n=0}^N$ to $\{ V_h^n \}_{n=-\infty}^{n=\infty}$ and $\{ b_n^{(\alpha)} \}_{n=0}^{n=\infty}$ to
$\{ b_n^{(\alpha)}  \}_{n=-\infty}^{n=\infty}$ by zero. Then $\bar\partial_\tau^\alpha V_h^n$ can be written as
$
\bar \partial_\tau^{\alpha} V_h^n = \tau^{-\alpha}\sum_{j=-\infty}^\infty b_{n-j}^{(\alpha)} V_h^j.
$
Next we denote the discrete Fourier transform $ \widetilde {[V_h^n]}(\zeta)$ by
$\widetilde {[V_h^n]}(\zeta) = \sum_{n=-\infty}^\infty V_h^n  e^{-\mathrm{i} n\zeta}$.
By Parseval's theorem, since $V_h^0=0$, we have
\begin{equation*}
 \sum_{j=1}^N (\bar \partial_\tau^{\alpha} V_h^n , V_h^n) = \frac{1}{2\pi}\int_{-\pi}^\pi  (\widetilde{[\bar \partial_\tau^{\alpha} V_h^n]}(\zeta) , \widetilde{[V_h^n]}(\zeta)^*) \,\d\zeta
\end{equation*}
By the property of discrete Fourier transform, we have
\begin{equation*}
\begin{split}
 \sum_{j=1}^N (\bar \partial_\tau^{\alpha} V_h^n , V_h^n) &= \frac{\tau^{-\alpha}}{2\pi}\int_{-\pi}^\pi \widetilde{[b_n^{(\alpha)}]}~ \Big|\widetilde{[V_h^n]}(\zeta)\Big|^2 \,\d\zeta
 = \frac{\tau^{-\alpha}}{\pi}\int_{0}^\pi \Big[{\Re}\Big( 1-e^{-\mathrm{i}\zeta} \Big)^{\alpha}\Big] \Big|\widetilde{[y_n]}(\zeta)\Big|^2 \,\d\zeta \ge 0.\\
 \end{split}
\end{equation*}
Then Cauchy-Schwarz inequality and Poincar\'{e}'s inequality imply the desired estimate.
\end{proof}

\begin{lemma}\label{lem:CQ-0}
The following statements hold
\begin{align*}
  \sum_{n=0}^mb_n^{(\alpha)} = b_m^{(\alpha-1)} \quad\mbox{and}\quad
  |\tau^{-\alpha} \sum_{n=0}^{m} b_{n}^{(\alpha)}| \leq ct_{m+1}^{-\alpha}.
\end{align*}
\end{lemma}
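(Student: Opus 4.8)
The plan is to treat the two assertions separately, obtaining the second from the first together with a bound on the ratio of Gamma functions. The summation identity $\sum_{n=0}^m b_n^{(\alpha)} = b_m^{(\alpha-1)}$ should require almost no new work, since the excerpt has already recorded the relation $b_j^{(\alpha-1)} - b_{j-1}^{(\alpha-1)} = b_j^{(\alpha)}$ together with $b_0^{(\alpha-1)} = b_0^{(\alpha)} = 1$. First I would simply telescope: summing the relation for $j=1,\dots,m$ gives $\sum_{j=1}^m b_j^{(\alpha)} = b_m^{(\alpha-1)} - b_0^{(\alpha-1)}$, and adding $b_0^{(\alpha)} = b_0^{(\alpha-1)} = 1$ yields the claim. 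Alternatively one may argue purely with generating functions: since the coefficient of $\xi^m$ in $(1-\xi)^{-1}\sum_{n\geq 0} b_n^{(\alpha)}\xi^n$ is exactly $\sum_{n=0}^m b_n^{(\alpha)}$, while $(1-\xi)^{-1}(1-\xi)^\alpha = (1-\xi)^{\alpha-1} = \sum_{m\geq 0} b_m^{(\alpha-1)}\xi^m$, matching coefficients of $\xi^m$ gives the identity directly.

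For the second estimate, I would first use the identity to write $\tau^{-\alpha}\sum_{n=0}^m b_n^{(\alpha)} = \tau^{-\alpha} b_m^{(\alpha-1)}$, and then invoke the positivity $b_m^{(\alpha-1)}>0$ (already established in the excerpt) to drop the absolute value. It then remains to bound $b_m^{(\alpha-1)}$. Starting from the explicit weight $b_m^{(\alpha-1)} = (-1)^m\frac{(\alpha-1)(\alpha-2)\cdots(\alpha-m)}{m!}$ and collecting signs, one rewrites it as the rising-factorial product $b_m^{(\alpha-1)} = \frac{1}{m!}\prod_{k=1}^m(k-\alpha)$, which in closed form reads
\begin{equation*}
 b_m^{(\alpha-1)} = \frac{\Gamma(m+1-\alpha)}{\Gamma(1-\alpha)\,\Gamma(m+1)}.
\end{equation*}
The target inequality $\tau^{-\alpha}b_m^{(\alpha-1)}\leq c\,t_{m+1}^{-\alpha}$, with $t_{m+1}=(m+1)\tau$, is therefore equivalent to the $\tau$-independent bound $\frac{\Gamma(m+1-\alpha)}{\Gamma(m+1)}\leq c\,(m+1)^{-\alpha}$.

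The crux of the argument, and the only genuinely nontrivial step, is this ratio bound, which I would obtain from Gautschi's inequality for ratios of Gamma functions: for $x>0$ and $0<s<1$ one has $x^{1-s}\leq \Gamma(x+1)/\Gamma(x+s)\leq (x+1)^{1-s}$. Taking $x=m$ and $s=1-\alpha\in(0,1)$ yields $\Gamma(m+1)/\Gamma(m+1-\alpha)\geq m^{\alpha}$, hence $\Gamma(m+1-\alpha)/\Gamma(m+1)\leq m^{-\alpha}\leq 2\,(m+1)^{-\alpha}$ for all $m\geq 1$ (using $((m+1)/m)^{\alpha}\leq 2^{\alpha}\leq 2$), while the case $m=0$ is immediate since both sides equal a constant. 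Combining with the closed form gives $b_m^{(\alpha-1)}\leq c\,(m+1)^{-\alpha}$ with $c$ depending only on $\alpha$, and multiplying by $\tau^{-\alpha}$ finishes the proof. I expect the telescoping identity and the algebraic reduction to the Gamma quotient to be routine, with the Gamma-ratio asymptotics being the single place where an external analytic estimate is needed; should one prefer to avoid citing Gautschi's inequality, the same bound follows from Stirling's formula for large $m$ combined with the boundedness of the quotient for the finitely many small $m$.
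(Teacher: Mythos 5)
Your proof is correct. For the summation identity you telescope the relation $b_j^{(\alpha-1)}-b_{j-1}^{(\alpha-1)}=b_j^{(\alpha)}$ (and your generating-function alternative, matching coefficients in $(1-\xi)^{-1}(1-\xi)^\alpha=(1-\xi)^{\alpha-1}$, is precisely the computation the paper carries out, with $v_m=\sum_{n=0}^m b_n^{(\alpha)}$). The difference lies in the second assertion: the paper simply cites an external lemma for the bound $b_m^{(\alpha-1)}\leq c(m+1)^{-\alpha}$, whereas you prove it from scratch via the closed form $b_m^{(\alpha-1)}=\frac{\Gamma(m+1-\alpha)}{\Gamma(1-\alpha)\Gamma(m+1)}$ and Gautschi's inequality with $x=m$, $s=1-\alpha\in(0,1)$, handling $m=0$ separately and absorbing $((m+1)/m)^\alpha\leq 2$. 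Your sign bookkeeping is right: for $\alpha\in(0,1)$ each factor $k-\alpha$ is positive, so $b_m^{(\alpha-1)}=\frac{1}{m!}\prod_{k=1}^m(k-\alpha)>0$, which both reconfirms the positivity stated in the paper and lets you drop the absolute value. What your route buys is self-containedness --- the only input is a standard Gamma-ratio estimate rather than a lemma from another paper --- at the cost of a slightly longer argument; the paper's citation is terser but leaves the quantitative content opaque to the reader. Both are fully rigorous, and the reduction $\tau^{-\alpha}b_m^{(\alpha-1)}\leq c\,t_{m+1}^{-\alpha}\iff \Gamma(m+1-\alpha)/\Gamma(m+1)\leq c(m+1)^{-\alpha}$, using $t_{m+1}=(m+1)\tau$, is exactly right.
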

\begin{proof}
Let $\sum_{n=0}^mb_n^{(\alpha)} = v_m$. Then by changing the order of summation, we have
\begin{align*}
 \sum_{m=0}^\infty v_m \xi^m &= \sum_{m=0}^\infty \xi^m \sum_{n=0}^mb_n^{(\alpha)} {=\sum_{n=0}^\infty b_n^{(\alpha)}  \sum_{m=n}^\infty \xi^m}\\
   &= {\sum_{n=0}^\infty b_n^{(\alpha)} \xi^n \sum_{m=n}^\infty \xi^{m-n}} = \Big(\sum_{n=0}^\infty b_n^{(\alpha)}\xi^n\Big) \Big(\sum_{m=0}^\infty \xi^m\Big)\\
 &= (1-\xi)^{\alpha} (1-\xi)^{-1} = (1-\xi)^{\alpha-1}.
\end{align*}
Therefore,  $v_m = b_m^{(\alpha-1)} \le c(m+1)^{-\alpha}$ \cite[Lemma 2.3]{JinLiZhou:nonlinear}, which shows the
second assertion.
\end{proof}

The next result gives a discrete analogue of the following well known inequality \cite{Alikanov:2010}
\begin{equation*}
   \fy(t) \partial_t^\alpha(\fy(t)-\fy(0))\geq \tfrac12\partial_t^\alpha(|\fy(t)|^2-\fy(0)|^2).
\end{equation*}
It is useful for deriving \textit{a priori} estimates on the fully discrete solutions.
\begin{lemma}\label{lem:cq-prod}
Let $\bar\partial_\tau^\alpha \fy^n$ be the backward Euler CQ defined as \eqref{eqn:CQ-BE}. Then there holds
\begin{equation*}
\big(\bar\partial_\tau^\alpha (\fy^n - \fy^0)\big)\fy^n \ge \tfrac12 \bar\partial_\tau^\alpha \big(  | \fy^n |^2- | \fy^0 |^2 \big)
\end{equation*}
\end{lemma}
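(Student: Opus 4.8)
The plan is to reduce the claimed pointwise inequality to a manifestly nonnegative quadratic form in the increments $\fy^{n-j}-\fy^n$ and $\fy^n-\fy^0$, using nothing beyond the sign pattern of the quadrature weights already recorded in the excerpt. First I would multiply both sides by $\tau^\al>0$ and expand the convolution sums via \eqref{eqn:CQ-BE}, so that it suffices to establish
\[
\sum_{j=0}^n b_j^{(\al)}\Big[(\fy^{n-j}-\fy^0)\fy^n-\tfrac12\big(|\fy^{n-j}|^2-|\fy^0|^2\big)\Big]\ge 0.
\]

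The central step is a pointwise completing-the-square identity: for scalars $u,v,w$ one has
\[
(v-w)u-\tfrac12\big(v^2-w^2\big)=\tfrac12(u-w)^2-\tfrac12(v-u)^2.
\]
Applying this with $u=\fy^n$, $v=\fy^{n-j}$, $w=\fy^0$ rewrites the $j$-th summand as $\tfrac12(\fy^n-\fy^0)^2-\tfrac12(\fy^{n-j}-\fy^n)^2$, so the left-hand side collapses to
\[
\tfrac12(\fy^n-\fy^0)^2\sum_{j=0}^n b_j^{(\al)}-\tfrac12\sum_{j=0}^n b_j^{(\al)}(\fy^{n-j}-\fy^n)^2.
\]

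Then I would invoke the sign facts from the excerpt. For the first term, \cref{lem:CQ-0} gives $\sum_{j=0}^n b_j^{(\al)}=b_n^{(\al-1)}$, and since $b_n^{(\al-1)}>0$ this contribution is nonnegative. For the second term, the $j=0$ summand vanishes because $\fy^{n-0}-\fy^n=0$, while for $j\ge 1$ one has $b_j^{(\al)}<0$, so each remaining summand $-b_j^{(\al)}(\fy^{n-j}-\fy^n)^2$ is again nonnegative. Adding the two nonnegative pieces yields the claim, together with the explicit lower bound $\tfrac12 b_n^{(\al-1)}(\fy^n-\fy^0)^2+\tfrac12\sum_{j=1}^n\big(-b_j^{(\al)}\big)(\fy^{n-j}-\fy^n)^2$ for $\tau^\al$ times the left-minus-right difference.

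I do not expect a serious obstacle: once the completing-the-square decomposition is in hand, positivity is immediate from the weight signs. The only point requiring care is the \emph{choice of grouping}, namely pairing against the fixed endpoints $w=\fy^0$ and $u=\fy^n$ rather than consecutive indices, which is exactly what makes both resulting sums simultaneously nonnegative and forces the $j=0$ term of the second sum to drop out. The accompanying check is that $\sum_{j=0}^n b_j^{(\al)}=b_n^{(\al-1)}>0$ is precisely what renders the memory term $(\fy^n-\fy^0)^2$ harmless; this is the discrete counterpart of the boundary/memory contribution in Alikhanov's continuous inequality.
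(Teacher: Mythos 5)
Your proof is correct and is essentially the paper's argument in exact form: the paper applies Young's inequality to the cross terms $b_{n-j}^{(\alpha)}\fy^n\fy^j$ (with $b_j^{(\alpha)}<0$ for $j\ge1$ reversing the inequality) and to $\big(\sum_{j=0}^n b_{n-j}^{(\alpha)}\big)\fy^n\fy^0$ (using $\sum_{j=0}^n b_j^{(\alpha)}=b_n^{(\alpha-1)}>0$), and the nonnegative squares discarded in those two steps are precisely the terms $-\tfrac12 b_j^{(\alpha)}(\fy^{n-j}-\fy^n)^2$ and $\tfrac12 b_n^{(\alpha-1)}(\fy^n-\fy^0)^2$ that your completing-the-square identity exhibits explicitly. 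The only difference is cosmetic: your packaging yields an explicit nonnegative remainder bounding the gap, while the paper obtains the same conclusion by discarding those squares via the two inequalities.
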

\begin{proof}
By the definition of backward Euler CQ in \eqref{eqn:CQ-BE}, we deduce
\begin{equation*}
\begin{split}
\big(\bar\partial_\tau^\alpha (\fy^n - \fy^0)\big)\fy^n
&=\tau^{-\alpha}\Big( |\fy^n|^2 + \sum_{j=0}^{n-1} b_{n-j}^{(\alpha)} \fy^n \fy^j  - \big(\sum_{j=0}^{n} b_{n-j}^{(\alpha)}\big) \fy^n\fy^0\Big).
 \end{split}
\end{equation*}
Now since the binomial coefficient $b_{j}^{(\alpha)}<0$ for $j\geq1$, we deduce
\begin{equation*}
\begin{split}
 \sum_{j=0}^{n-1} b_{n-j}^{(\alpha)} \fy^n \fy^j  \ge \frac12 \sum_{j=0}^{n-1} b_{n-j}^{(\alpha)} |\fy^n|^2  +  \frac12 \sum_{j=0}^{n-1} b_{n-j}^{(\alpha)} |\fy^j|^2,
 \end{split}
\end{equation*}
and
\begin{equation*}
 \big(\sum_{j=0}^{n} b_{n-j}^{(\alpha)}\big) \fy^n\fy^0 \le \frac12 \big(\sum_{j=0}^{n} b_{n-j}^{(\alpha)}\big) |\fy^n|^2 + \frac12 \big(\sum_{j=0}^{n} b_{n-j}^{(\alpha)}\big) |\fy^0|^2.
 \end{equation*}
Then the desired result follows immediately.
\end{proof}

The next result gives a discrete continuity result.
\begin{lemma}\label{lem:cont-p2s-disc}
Let the sequence $\{q_h^j\}\subset \Uad_h$ be convergent to $q_h^*\in \Uad_h$ in $L^1(\Omega)$. Then
\begin{equation*}
  \lim_{j\to\infty}\tau\sum_{n=1}^N\int_\Omega |U_{h,\tau}^n(q_h^j) -z_n^\delta|^2\d x = \tau\sum_{n=1}^N\int_\Omega|U_{h,\tau}^n(q_h^*) -z_n^\delta|^2\d x
\end{equation*}
\end{lemma}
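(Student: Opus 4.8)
The plan is to reduce the convergence of the discrete data-misfit to the convergence of the discrete states, and then to control the state difference by an energy argument modelled on \cref{lem:disc-pos}. For a grid sequence $w=(w^n)_{n=1}^N$ write $\|w\|_\tau^2:=\tau\sum_{n=1}^N\|w^n\|_{L^2(\Omega)}^2$, so that the quantity in the claim equals $\|U_{h,\tau}(q_h^j)-z^\delta\|_\tau^2$, where $z^\delta$ denotes the sequence $(z_n^\delta)_n$. The reverse triangle inequality
\begin{equation*}
\Big|\,\|U_{h,\tau}(q_h^j)-z^\delta\|_\tau-\|U_{h,\tau}(q_h^*)-z^\delta\|_\tau\,\Big|\le \|U_{h,\tau}(q_h^j)-U_{h,\tau}(q_h^*)\|_\tau
\end{equation*}
shows that it suffices to prove $\|U_{h,\tau}(q_h^j)-U_{h,\tau}(q_h^*)\|_\tau\to0$, after which squaring gives the claimed convergence of the sums. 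To this end I set $V_h^{j,n}:=U_{h,\tau}^n(q_h^*)-U_{h,\tau}^n(q_h^j)$. Since both states solve \eqref{eqn:fully} with the common initial value $P_hu_0$, subtracting the two equations, using linearity of the convolution quadrature and the splitting $A_h(q_h^*)U_{h,\tau}^n(q_h^*)-A_h(q_h^j)U_{h,\tau}^n(q_h^j)=A_h(q_h^j)V_h^{j,n}+(A_h(q_h^*)-A_h(q_h^j))U_{h,\tau}^n(q_h^*)$, shows that $V_h^{j,n}$ satisfies $V_h^{j,0}=0$ and
\begin{equation*}
(\bar\partial_\tau^\alpha V_h^{j,n},\chi)+(q_h^j\nabla V_h^{j,n},\nabla\chi)=-((q_h^*-q_h^j)\nabla U_{h,\tau}^n(q_h^*),\nabla\chi),\quad\forall\chi\in X_h.
\end{equation*}

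Next I would run the energy argument of \cref{lem:disc-pos}. Taking $\chi=V_h^{j,n}$, summing over $n$, and discarding the nonnegative term $\tau\sum_{n=1}^N(\bar\partial_\tau^\alpha V_h^{j,n},V_h^{j,n})\ge0$ (the positivity of the convolution quadrature established there via Parseval's identity), the coercivity $q_h^j\ge c_0$ and the Cauchy--Schwarz inequality give
\begin{equation*}
c_0\,\tau\sum_{n=1}^N\|\nabla V_h^{j,n}\|_{L^2(\Omega)}^2\le \tau\sum_{n=1}^N\|(q_h^*-q_h^j)\nabla U_{h,\tau}^n(q_h^*)\|_{L^2(\Omega)}\,\|\nabla V_h^{j,n}\|_{L^2(\Omega)}.
\end{equation*}
Applying Young's inequality to absorb the factor $\|\nabla V_h^{j,n}\|_{L^2(\Omega)}$ into the left-hand side, and then Poincar\'e's inequality on $X_h\subset\dH1$, yields
\begin{equation*}
\|V_h^{j,\cdot}\|_\tau^2\le c\,\tau\sum_{n=1}^N\|(q_h^*-q_h^j)\nabla U_{h,\tau}^n(q_h^*)\|_{L^2(\Omega)}^2 .
\end{equation*}

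Finally I would show the right-hand side tends to zero. The sum is finite, and for the fixed mesh size $h$ each $U_{h,\tau}^n(q_h^*)$ is a fixed function in $X_h$ with piecewise constant gradient, so $\nabla U_{h,\tau}^n(q_h^*)\in L^\infty(\Omega)$ and $\|(q_h^*-q_h^j)\nabla U_{h,\tau}^n(q_h^*)\|_{L^2(\Omega)}\le \|\nabla U_{h,\tau}^n(q_h^*)\|_{L^\infty(\Omega)}\,\|q_h^*-q_h^j\|_{L^2(\Omega)}$. The hypothesis only furnishes $q_h^j\to q_h^*$ in $L^1(\Omega)$, but the box constraint $c_0\le q_h^j,q_h^*\le c_1$ upgrades this to $L^2(\Omega)$ convergence via $\|q_h^*-q_h^j\|_{L^2(\Omega)}^2\le (c_1-c_0)\|q_h^*-q_h^j\|_{L^1(\Omega)}\to0$; hence each summand, and thus the whole right-hand side, vanishes as $j\to\infty$, giving $\|V_h^{j,\cdot}\|_\tau\to0$ and, by the reverse triangle inequality, the claim. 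The main obstacle is the energy estimate: because the source enters in divergence form $(q_h^*-q_h^j)\nabla U_{h,\tau}^n(q_h^*)$ rather than as an $L^2(\Omega)$ forcing, \cref{lem:disc-pos} cannot be quoted verbatim and its proof must be rerun, with the positivity of the convolution quadrature doing the essential work of eliminating the fractional-in-time term; the secondary subtlety is the $L^1$-to-$L^2$ upgrade, which is precisely where the finite dimensionality of $X_h$ (fixed $h$) and the box constraint become indispensable.
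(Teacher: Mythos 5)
Your proof is correct and follows essentially the same route the paper intends for this omitted proof: the energy argument of \cref{lem:disc-pos} applied to the difference of discrete states (with the divergence-form source), followed by the limit argument of \cref{lem:conti-p2s}, using exactly the finite-dimensionality/box-constraint mechanism the paper hints at (your $L^1$-to-$L^2$ upgrade $\|q_h^*-q_h^j\|_{L^2(\Omega)}^2\le(c_1-c_0)\|q_h^*-q_h^j\|_{L^1(\Omega)}$ together with $\nabla U_{h,\tau}^n(q_h^*)\in L^\infty(\Omega)$ is a clean concrete instance of the paper's ``all norms are equivalent on $V_h$'' remark, and even avoids the subsequence step implicit in the a.e.-convergence route). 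One quibble: \cref{lem:disc-pos} can in fact be quoted nearly verbatim, since its conclusion bounds $\tau\sum_n\|\nabla V_h^n\|_{L^2(\Omega)}^2$ by the pairing $\tau\sum_n(f_h^n,V_h^n)$ and one may take $f_h^n\in X_h$ to be the Riesz representer of the divergence-form functional $\chi\mapsto-((q_h^*-q_h^j)\nabla U_{h,\tau}^n(q_h^*),\nabla\chi)$, so rerunning its proof, while harmless, is not strictly necessary.
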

\begin{proof}
Using  \cref{lem:disc-pos}, the proof is similar to that of \cref{lem:conti-p2s}, since in a finite-dimensional
space $V_h$, all norms are equivalent, and the convergence in $L^1(\Omega)$ implies almost every convergence
\cite{EvansGariepy:2015}. Thus the proof is omitted.
\end{proof}

Then we can obtain the existence of a discrete minimizer $q_h^*\in \Uad_h$. The proof is identical with that in
\cref{thm:ex}, and hence omitted. Note that the discrete minimizer $q_h^*$ depends implicitly also on the
time step size $\tau$ through the weak formulation \eqref{eqn:fully}.
\begin{theorem}\label{thm:ex-fully}
Under \cref{ass:data1}, there exists one minimizer $q_h^*\in \Uad_h$ to \eqref{eqn:ob-disc}--\eqref{eqn:fully}.
\end{theorem}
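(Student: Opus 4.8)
The plan is to mirror the compactness argument used in the proof of \cref{thm:ex}, but to exploit the finite dimensionality of the discrete admissible set $\Uad_h$ to streamline the extraction of a convergent subsequence. Since the discrete functional $J_{\gamma,h,\tau}$ is bounded below by zero, there is a minimizing sequence $\{q_h^j\}_{j\geq1}\subset\Uad_h$ satisfying $\lim_{j\to\infty}J_{\gamma,h,\tau}(q_h^j)=\inf_{q_h\in\Uad_h}J_{\gamma,h,\tau}(q_h)$. As in the continuous case, the penalty term bounds $\|\nabla q_h^j\|_{L^2(\Omega)}$ uniformly, and together with the box constraint $c_0\leq q_h^j\leq c_1$ (which controls $\|q_h^j\|_{L^2(\Omega)}$) this shows the sequence is uniformly bounded in $H^1(\Omega)$.

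The first main step is to invoke the finite dimensionality of $V_h$, which is exactly where the discrete argument simplifies relative to \cref{thm:ex}. The set $\Uad_h=\Uad\cap V_h$ is a closed and bounded subset of the finite-dimensional space $V_h$, hence compact; thus one may extract a subsequence, still denoted $\{q_h^j\}_{j\geq1}$, converging to some limit $q_h^*$. Because all norms on $V_h$ are equivalent, this convergence holds simultaneously in $H^1(\Omega)$ and in $L^1(\Omega)$. The limit $q_h^*$ inherits the pointwise bounds $c_0\leq q_h^*\leq c_1$ and lies in $V_h$, so $q_h^*\in\Uad_h$. This compactness replaces the more delicate weak-convergence together with compact-Sobolev-embedding and a.e.-convergence arguments needed in the continuous setting.

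The second step is to pass to the limit term by term. For the fidelity term I would apply \cref{lem:cont-p2s-disc}: the $L^1(\Omega)$ convergence $q_h^j\to q_h^*$ yields directly $\lim_{j\to\infty}\tau\sum_{n=1}^N\int_\Omega|U_h^n(q_h^j)-z_n^\delta|^2\,\d x=\tau\sum_{n=1}^N\int_\Omega|U_h^n(q_h^*)-z_n^\delta|^2\,\d x$. For the penalty term I would use continuity of $q_h\mapsto\|\nabla q_h\|_{L^2(\Omega)}^2$ with respect to the (equivalent) $H^1(\Omega)$ norm on $V_h$, so that $\|\nabla q_h^j\|_{L^2(\Omega)}^2\to\|\nabla q_h^*\|_{L^2(\Omega)}^2$; weak lower semicontinuity, as used in \cref{thm:ex}, would also suffice but is not required here. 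Combining the two limits gives $J_{\gamma,h,\tau}(q_h^*)=\lim_{j\to\infty}J_{\gamma,h,\tau}(q_h^j)=\inf_{q_h\in\Uad_h}J_{\gamma,h,\tau}(q_h)$, so that $q_h^*$ is a minimizer.

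I expect no genuine obstacle, since the essential continuity of the discrete parameter-to-state map, reflecting the nonlinearity of the forward problem, has already been isolated in \cref{lem:cont-p2s-disc} by means of \cref{lem:disc-pos}. The only point requiring care is that the same subsequence realizes both the $H^1(\Omega)$ and the $L^1(\Omega)$ limits, so that compactness and convergence of the fidelity term are compatible; this is immediate from norm equivalence on the finite-dimensional space $V_h$. This is precisely why the discrete existence result is, as noted, identical in spirit to \cref{thm:ex}, only simpler.
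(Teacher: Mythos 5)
Your proof is correct, but it takes a genuinely different (and more elementary) route than the paper's. The paper disposes of \cref{thm:ex-fully} by declaring the proof identical to that of \cref{thm:ex}: minimizing sequence bounded in $H^1(\Omega)$, extraction of a \emph{weakly} convergent subsequence, compact Sobolev embedding into $L^1(\Omega)$, passage to an a.e.\ convergent sub-subsequence, the discrete continuity result \cref{lem:cont-p2s-disc} for the fidelity term, and weak lower semicontinuity of the seminorm for the penalty. You instead exploit finite dimensionality of $V_h$ at the compactness step: $\Uad_h$ is closed and bounded in $V_h$, hence compact by Heine--Borel, so the subsequence converges \emph{strongly}, simultaneously in $H^1(\Omega)$ and $L^1(\Omega)$ by norm equivalence; this makes the penalty term genuinely continuous along the subsequence and removes any appeal to weak lower semicontinuity, while the fidelity term is handled by \cref{lem:cont-p2s-disc} exactly as the paper intends. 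What your argument buys is simplicity and a stronger mode of convergence for fixed $h$; what the paper's template buys is uniformity of structure --- the same proof skeleton works verbatim in the continuous and discrete settings (which is why the authors can omit it), and it does not rely on compactness constants that degenerate as $h\to 0$, though that degeneracy is irrelevant for the fixed-$h$ statement proved here. One small point you assert rather than prove and should make explicit: closedness of $\Uad_h$, i.e.\ that the box constraints $c_0\le q_h\le c_1$ survive the limit. This is immediate because convergence in any norm on the finite-dimensional space $V_h$ implies uniform convergence on $\bar\Omega$, so the pointwise bounds pass to the limit, and $V_h$ itself is closed; but the sentence belongs in the proof, since compactness of $\Uad_h$ is the load-bearing step of your argument.
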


Below we analyze the convergence of the sequence $\{q_h^*\}_{h>0}$ as $h,\tau\to0$.
The next result is an analogue of \cref{lem:conti-p2s}, and
plays an important role in the convergence analysis. For the sequence of discrete solutions
$U_{h,\tau}^n\equiv U_{h,\tau}^n(q_h)\in X_h$ to problem \eqref{eqn:fully}, we define a piecewise constant in time
interpolation $u_{h,\tau}(t)$ by
\begin{equation}
  u_{h,\tau}(t) = U_{h,\tau}^n, \quad t\in [t_{n},t_{n+1}),\quad n=0,\ldots, N-1.\label{eqn:uhtau}
\end{equation}

\begin{lemma}\label{lem:cont-p2s-cont}
Let $U_{h,\tau}^n\equiv U_{h,\tau}^n(q_h)\in X_h$ be the discrete solutions to problem \eqref{eqn:fully} with $q_h\in \Uad_h$,
and the sequence $\{q_h\in\Uad_h\}_{h>0}$ convergent to some $q^*\in \Uad$ a.e.
{as $h,\tau \to 0^+$}. Then under  \cref{ass:data1}, for the piecewise constant
interpolation $u_{h,\tau}$ defined in \eqref{eqn:uhtau}, there holds
\begin{equation*}
   u_{h,\tau}(q_h)\to u(q^*) \mbox{ weakly in } L^2(0,T;H^1(\Omega)),\quad \mbox{as } h,\tau\to0.
\end{equation*}
\end{lemma}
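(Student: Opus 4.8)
The plan is to establish the weak convergence in two stages: first obtain uniform (in $h,\tau$) \textit{a priori} bounds on the discrete solutions $u_{h,\tau}(q_h)$ in $L^2(0,T;H^1(\Omega))$, which guarantees a weakly convergent subsequence, and then identify the weak limit as $u(q^*)$ by passing to the limit in the weak formulation. Since the limit $u(q^*)$ is unique, the convergence of the whole sequence (not just a subsequence) follows by a standard argument.

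First I would derive the uniform bound. Applying \cref{lem:disc-pos} to the shifted solution $V_h^n = U_{h,\tau}^n(q_h)-U_h^0$ (which solves a problem with zero initial data and a modified right-hand side incorporating $A_h(q_h)U_h^0$ and $P_hf^n$), together with the Cauchy--Schwarz and Poincar\'e inequalities, yields $\tau\sum_{n=1}^N\|\nabla U_{h,\tau}^n(q_h)\|_{L^2(\Omega)}^2\le c$, where $c$ depends on $\|f\|_{L^2(0,T;L^2(\Omega))}$ and $\|u_0\|_{\dot H^1(\Omega)}$ but is independent of $h,\tau$ and of the particular $q_h$ (because the coercivity constant $c_0$ in the admissible set $\Uad_h$ is uniform). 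This translates into $\|u_{h,\tau}(q_h)\|_{L^2(0,T;H^1(\Omega))}\le c$. Hence there exists a subsequence and some $w\in L^2(0,T;H^1(\Omega))$ with $u_{h,\tau}(q_h)\rightharpoonup w$ weakly.

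Next I would identify $w=u(q^*)$ by passing to the limit in the variational identity \eqref{eqn:fully}. For a fixed smooth test function $v\in \dH1$ and a smooth temporal weight, I would write the discrete scheme in its weak, time-integrated form and examine each term. The diffusion term requires handling the bilinear form $(q_h\nabla u_{h,\tau},\nabla v)$: here I would split $q_h\nabla u_{h,\tau} = (q_h-q^*)\nabla u_{h,\tau} + q^*\nabla u_{h,\tau}$. The second piece passes to the limit by weak convergence of $\nabla u_{h,\tau}$ against the fixed test object $q^*\nabla v\in L^2$. For the first piece, the combination of the uniform bound on $\|\nabla u_{h,\tau}\|_{L^2(0,T;L^2(\Omega))}$ and the convergence $q_h\to q^*$ a.e. (with the uniform $L^\infty$ bound $c_1$) lets me apply the dominated convergence theorem to show $q_h\to q^*$ strongly in $L^r(\Omega)$ for any finite $r$, so that the product term vanishes in the limit. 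The fractional-derivative term is handled by transferring the discrete CQ operator onto the test function via summation by parts and using the known consistency of backward Euler CQ for $\partial_t^\alpha$, together with the fact that the piecewise-constant interpolant of $f^n$ converges to $f$ and that $U_h^0 = P_hu_0\to u_0$. Collecting the limits shows $w$ satisfies \eqref{eqn:var} with coefficient $q^*$ and initial data $u_0$, so by uniqueness $w=u(q^*)$.

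The main obstacle is controlling the mixed term $(q_h-q^*)\nabla u_{h,\tau}$ in the limit, because $\nabla u_{h,\tau}$ only converges \emph{weakly}, not strongly, while $q_h-q^*$ converges strongly but not uniformly in $h$ relative to how the test quantity is built from the discrete space $V_h$. The key is that one factor converges strongly and one weakly, so the product converges provided the strong factor is tested against a \emph{fixed} object; I therefore need to be careful to move $q_h-q^*$ onto the fixed test function $\nabla v$ rather than onto the weakly convergent $\nabla u_{h,\tau}$. A secondary technical point is the simultaneous limit $h,\tau\to 0$ in the convolution-quadrature term, which must be justified uniformly; this is where the consistency estimates for backward Euler CQ and the nonstandard nonsmooth-data estimates alluded to in the introduction do the essential work.
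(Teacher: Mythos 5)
Your overall architecture is the same as the paper's: a uniform energy bound via the positivity of the backward Euler CQ (testing with $U_h^n-U_h^0$, equivalently your shifted application of \cref{lem:disc-pos}), extraction of a weakly convergent subsequence in $L^2(0,T;H^1(\Omega))$, passage to the limit in the scheme against smooth test functions with the diffusion term split so that the strongly convergent factor $q_h-q^*$ lands on a fixed object (this is exactly the paper's ${\rm I}+{\rm II}+{\rm III}$ splitting, modulo the $P_h\phi$-versus-$\phi$ correction term, which you should not forget since the discrete test functions must lie in $X_h$), and a subsequence-of-subsequence argument at the end. All of that is sound.

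There is, however, one genuine gap. After summation by parts, your limit identity reads
\begin{equation*}
  (w-u_0,\ {}^{R}\partial_t^\alpha \phi)_{L^2(0,T;L^2(\Omega))} + (q^*\nabla w,\nabla\phi)_{L^2(0,T;L^2(\Omega))} = (f,\phi)_{L^2(0,T;L^2(\Omega))},
\end{equation*}
i.e., a \emph{transposed} (very weak) formulation in which the fractional derivative sits on the test function. The solution concept in \eqref{eqn:var}, to which you appeal for uniqueness, requires $\partial_t^\alpha (w-u_0)$ to exist as a function in $L^2(0,T;L^2(\Omega))$; your plan never establishes this, so the step ``$w$ satisfies \eqref{eqn:var}, hence $w=u(q^*)$ by uniqueness'' is not justified as written. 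The paper closes precisely this hole with a \emph{second} a priori estimate: testing \eqref{eqn:fully} with $\chi=\bar\partial_\tau^\alpha(U_h^n-U_h^0)$ and invoking the discrete product inequality of \cref{lem:cq-prod} together with \cref{lem:CQ-0} yields a uniform bound $\tau\sum_{n=1}^N\|\bar\partial_\tau^\alpha(U_h^n-U_h^0)\|_{L^2(\Omega)}^2\le c$, whence a weak limit $v^*\in L^2(0,T;L^2(\Omega))$ of the discrete fractional derivatives exists and is identified (by the same duality computation) as $v^*=\partial_t^\alpha(w-u_0)$. You would need either to add this estimate or to prove a uniqueness theorem for the very weak formulation; the former is the paper's route and is the natural fix. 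A minor secondary point: the CQ consistency needed here is only the elementary one for $C^1([0,T];\dot H^1(\Omega))$ test functions with $\phi(T)=0$, not the nonsmooth-data estimates from the introduction, which enter only in the error analysis of \cref{sec:err}.
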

\begin{proof}
Taking the test function $\chi=U_{h}^n-U_{h}^0$ in \eqref{eqn:fully} and summing over $n$ yield
\begin{equation*}
  \tau\sum_{n=0}^N (\bar\partial_\tau^\alpha (U_h^n-U_h^0),U_h^n-U_h^0) + \tau\sum_{n=1}^N (q_h\nabla U_h^n,\nabla(U_h^n-U_h^0)) = \tau \sum_{n=1}^n(f_h^n,U_h^n-U_h^0),
\end{equation*}
This identity, the nonnegativity of the discrete convolution $\bar\partial_\tau^\alpha$ (see the proof of
\cref{lem:disc-pos}),  Poincar\'{e} inequality and Young's inequality, and the $L^2(\Omega)$ stability
of $P_h$ lead to
\begin{align*}
  \tau\sum_{n=1}^N \|\nabla U_h^n\|_{L^2(\Omega)}^2 & \leq c\tau\sum_{n=1}^N \Big(\|\nabla U_h^0\|_{L^2(\Omega)}^2 + \|f_h^n\|_{H^{-1}(\Omega)}^2\Big) \leq c\big(\|\nabla u_0\|_{L^2(\Omega)}^2 + \|f\|_{L^2(0,T;H^{-1}(\Omega))}^2\big).
\end{align*}
Thus, the sequence $\{u_{h,\tau}\}_{h,\tau>0}$ is uniformly bounded in $L^2(0,T;H^1(\Omega))$, and thus
there exists a subsequence, still denoted by $\{u_{h,\tau}\}_{h,\tau>0}$, such that
\begin{equation}\label{eqn:weak-conv-u}
  u_{h,\tau} \mbox{ converges weakly to some } u^* \mbox{ in } L^2(0,T;H^1(\Omega)).
\end{equation}
Meanwhile, by taking the test function $\chi=\bar \partial_\tau^\alpha (U_h^n-U_h^0)$ in \eqref{eqn:fully},
\begin{align}\label{eqn:es0}
  \tau\sum_{n=0}^N (\bar\partial_\tau^\alpha(U_h^n-U_h^0),\bar\partial_\tau^\alpha(U_h^n-U_h^0)) + \tau\sum_{n=1}^N (q_h\nabla U_h^n,\bar\partial_\tau^\alpha\nabla(U_h^n-U_h^0))
  = \tau\sum_{n=1}^n(f_h^n,\bar\partial_\tau^\alpha(U_h^n-U_h^0)).\nonumber
\end{align}
Then \cref{lem:cq-prod}, the fact that $b_j^{(\alpha-1)}>0$ for all $j\geq 0$ and \cref{lem:CQ-0} lead to
\begin{align*}
&2\tau\sum_{n=1}^N(\nabla U_h^n)^\mathrm{t}\bar\partial_\tau^\alpha\nabla(U_h^n-U_h^0)
\geq \tau\sum_{n=1}^N  \bar\partial_\tau^\alpha \Big( \| \nabla U_h^n \|_{L^2(\Omega)}^2  - \| \nabla U_h^0\|_{L^2(\Omega)}^2 \Big)\\
=& \tau\sum_{j=0}^N  \Big(\|\nabla U_h^j\|_{L^2(\Omega)}^2  - \| \nabla U_h^0\|_{L^2(\Omega)}^2 \Big) b_{N-j}^{(\alpha-1)}
\geq\tau\sum_{j=0}^N  - \|\nabla U_h^0\|_{L^2(\Omega)}^2  b_{N-j}^{(\alpha-1)}
\ge - c \| \nabla U_h^0 \|_{L^2(\Omega)}^2,
\end{align*}
Hence, there holds
$
\tau\sum_{n=1}^N (q_h\nabla U_h^n,\bar\partial_\tau^\alpha\nabla(U_h^n-U_h^0))
\ge - c \| \nabla U_h^0 \|_{L^2\II}^2.
$
This and Young's inequality imply
\begin{equation*}
  \tau \sum_{n=1}^N \|\bar\partial_\tau^\alpha( U_{h}^n-U_h^0)\|_{L^2(\Omega)}^2 \leq c(\|\nabla u_0\|_{L^2(\Omega)}^2 + \|f\|_{L^2(0,T;L^2(\Omega))}^2).
\end{equation*}
Thus the sequence of piecewise constant interpolation, denoted by $\{\bar\partial_\tau^\alpha (u_{h,\tau}-U_h^0)\}_{h,\tau>0}$,
is uniformly bounded in $L^2(0,T;L^2(\Omega))$, and there exists a subsequence, still denoted by $\{\bar\partial_\tau^\alpha
(u_{h,\tau}-U_h^0)\}_{h,\tau>0}$, and some $v^*\in L^2(0,T;L^2(\Omega))$ such that it converges to $v^*$ weakly in $L^2(0,T;L^2
(\Omega))$. Next we claim that $u^*$ satisfies the weak formulation of $u(q^*)$, cf. \eqref{eqn:var}. To this end, we take a
smooth test function $\phi\in C^1([0,T];{\dH1})$ with $\phi(T)=0$, and define an approximation $\phi_{h,\tau}$ by
$\phi_{h,\tau} (t) = \tau^{-1}\int_{t_{n-1}}^{t_n}P_h\phi(t)\d t$, $t\in (t_{n-1},t_n]$. Then the density of $X_h$ in $\dot{H}^1(\Omega)$
and piecewise constant functions in $L^2(0,T)$ implies that $\lim_{h,\tau\to0^+}   \|\phi_{h,\tau}-\phi\|_{L^2(0,T;H^1(\Omega))}=0$.
Hence, by discrete summation by parts and straightforward computation, there holds
\begin{align*}
  \tau\sum_{n=1}^N (\bar\partial_\tau^\alpha(U_h^n-U_h^0), \phi_{h,\tau}(t_{n})) & = (\bar\partial_\tau^\alpha(u_{h,\tau}-U_h^0),P_h\phi(t))_{L^2(0,T;L^2(\Omega))} \\
   &= (u_{h,\tau}-U_h^0,{^R\bar\partial_\tau^\alpha}P_h\phi(t))_{L^2(0,T;L^2(\Omega))},
\end{align*}
where the notation ${^R\bar\partial_\tau^\alpha}P_h\phi(t)$ denotes
${^R\bar\partial_\tau^\alpha}P_h\phi(t) = {\sum_{i=n}^N b_{n-i}^{(\alpha)} P_h\phi(t+(i-n)\tau)}$,
for $ t\in(t_{n-1},t_n], \ n=1,2,\ldots,N.$
By the approximation property of $^R\bar\partial_\tau^\alpha$ and $P_h$ {(see, e.g., \cite[Section 2.2]{Podlubny:1999})}, since $\phi\in C^1([0,T];{\dH1})$,
${^R\bar\partial_\tau^\alpha}P_h\phi(t)$ converges to $^R\partial_t^\alpha\phi(t)$ in $L^2(0,T;L^2(\Omega))$ as $h,\tau\to0^+$, and
\begin{equation*}
  \lim_{h,\tau\to0}(u_{h,\tau}-U_h^0,{^R\bar\partial_\tau^\alpha}P_h\phi(t))_{L^2(0,T;L^2(\Omega))} = (u^*-u_0, {^R\partial_t^\alpha}\phi(t))_{L^2(0,T;L^2(\Omega))},
\end{equation*}
and meanwhile, by the weak convergence of $\bar\partial_\tau^\alpha(u_{h,\tau}-U_h^0)$ to $v^*$ in $L^2(0,T;L^2(\Omega))$ and
the approximation property of $P_h$,
\begin{equation*}
  \lim_{h,\tau\to0} (\bar\partial_\tau^\alpha(u_{h,\tau}-U_h^0),P_h\phi(t))_{L^2(0,T;L^2(\Omega))} = (v^*,\phi(t))_{L^2(0,T;L^2(\Omega))}.
\end{equation*}
Comparing the preceding two identities shows that  $v^*=\partial_t^\alpha(u^*-u_0)$, i.e., $v^*$ is the weak fractional order derivative
of $u^*-u_0$. Now taking the test function $\chi=\phi_{h,\tau}(t_n)$ in \eqref{eqn:fully} and summing over $n$, we obtain
\begin{equation*}
  \tau\sum_{n=0}^N (\bar\partial_\tau^\alpha (U_h^n-U_h^0),\phi_{h,\tau}(t_n)) + \tau\sum_{n=1}^N (q_h\nabla U_h^n,\nabla\phi_{h,\tau}(t_n)) = \tau \sum_{n=1}^N(f_h^n,\phi_{h,\tau}(t_n)),
\end{equation*}
and by the definition of piecewise constant interpolations $\bar\partial_\tau(U_{h,\tau}^n-U_h^0)$
and $u_{h,\tau}(t)$ and the construction of the test function $\phi_{h,\tau}(t_n)$, it is equivalent to
\begin{equation*}
  (\bar\partial_\tau^\alpha (u_{h,\tau}^n-U_h^0),P_h\phi)_{L^2(0,T;L^2(\Omega))} + (q_h\nabla u_{h,\tau},\nabla P_h\phi(t))_{L^2(0,T;L^2(\Omega))} =
  (f_{h,\tau}, P_h\phi(t))_{L^2(0,T;L^2(\Omega))},
\end{equation*}
where $f_{h,\tau}(t)= \tau^{-1}\int_{t_{n-1}}^{t_n}P_hf(t)\d t$, for $t\in (t_{n-1},t_n]$, $n=1,\ldots,N$,
for which there holds $\lim_{h,\tau\to0^+}  \| f_{h,\tau} - f\|_{L^2(0,T;L^2(\Omega))} = 0$.
Upon passing limit on both sides, we deduce
\begin{align*}
  \lim_{h,\tau\to0^+}(\bar\partial_\tau^\alpha (U_{h,\tau}^n-U_h^0),P_h\phi)_{L^2(0,T;L^2(\Omega))} &= (\partial_t^\alpha(u^*-u_0),\phi)_{L^2(0,T;L^2(\Omega))},\\
  \lim_{h,\tau\to0^+} (f_{h,\tau}, P_h\phi(t))_{L^2(0,T;L^2(\Omega))} &= (f,\phi)_{L^2(0,T;L^2(\Omega))}.
\end{align*}
Further, to analyze the term $(q_h\nabla u_{h,\tau},\nabla P_h\phi(t))_{L^2(0,T;L^2(\Omega))}$, we employ the following splitting
\begin{align*}
  & |(q_h\nabla u_{h,\tau},\nabla P_h\phi(t))_{L^2(0,T;L^2(\Omega))} - ( q^*\nabla u^*,\nabla \phi(t))_{L^2(0,T;L^2(\Omega))}|\\
  \leq & |(q_h\nabla u_{h,\tau},\nabla P_h\phi(t))_{L^2(0,T;L^2(\Omega))} - ( q_h\nabla u_{h,\tau},\nabla \phi(t))_{L^2(0,T;L^2(\Omega))}| \\
      & + |(q_h\nabla u_{h,\tau},\nabla \phi(t))_{L^2(0,T;L^2(\Omega))} - ( q^* \nabla u_{h,\tau},\nabla \phi(t))_{L^2(0,T;L^2(\Omega))}| \\
      & + |(q^*\nabla u_{h,\tau},\nabla \phi(t))_{L^2(0,T;L^2(\Omega))} - ( q^* \nabla u^*,\nabla \phi(t))_{L^2(0,T;L^2(\Omega))}| : = {\rm I} + {\rm II} + {\rm III}.
\end{align*}
We bound the three terms separately. By the approximation property of $P_h$ and uniform boundedness of
$u_{h,\tau}$ in $L^2(0,T;H^1(\Omega))$ due to \eqref{eqn:weak-conv-u}, we deduce
\begin{equation*}
  \lim_{h,\tau\to0^+} {\rm I} \leq \lim_{h,\tau\to0^+}c\|u_{h,\tau}\|_{L^2(0,T;H^1(\Omega))}\|P_h\phi-\phi\|_{L^2(0,T;H^1(\Omega))}=0.
\end{equation*}
Next, since $q_h$ converges to $q^*$ a.e. and \eqref{eqn:weak-conv-u}, by
dominated convergence theorem \cite[Theorem 1.9]{EvansGariepy:2015} (with the argument in
Lemma \ref{lem:conti-p2s}), we have
\begin{equation*}
   \lim_{h,\tau\to0^+} {\rm II} \leq \lim_{h,\tau\to0^+}\|u_{h,\tau}\|_{L^2(0,T;H^1(\Omega))}\|(q_h-q^*)\phi\|_{L^2(0,T;H^1(\Omega))} = 0.
\end{equation*}
The third term ${\rm III}$ tends to zero as $h,\tau\to0^+$, in view of the weak convergence in \eqref{eqn:weak-conv-u}.
Consequently, combining the three assertions together yields
\begin{equation*}
  \lim_{h,\tau\to0^+}(q_h\nabla u_{h,\tau},\nabla P_h\phi(t))_{L^2(0,T;L^2(\Omega))}= ( q^*\nabla u^*,\nabla \phi(t))_{L^2(0,T;L^2(\Omega))}.
\end{equation*}
In sum, the limit $u^*$ satisfies that for any $\phi\in C^1([0,T];{\dot H^1(\Omega)}) $, there holds
\begin{equation*}
  (\partial_t^\alpha (u^*-u_0), \phi)_{L^2(0,T;L^2(\Omega))} + (q^*\nabla u^*,\nabla \phi)_{L^2(0,T;L^2(\Omega))} = (f,\phi)_{L^2(0,T;L^2(\Omega))}.
\end{equation*}
By the density of the space $C^1([0,T]; {\dH1})$ in $L^2(0,T; {\dH1})$,
the identity holds also for any $\phi\in L^2(0,T;{\dH1})$.
This immediately shows that $u^*$ is a weak solution to problem \eqref{eqn:fde} with $q^*$, i.e., $u^*=u(q^*)$.
Since every subsequence contains a convergent sub-subsequence, the whole sequence converges to $u(q^*)$.
This completes the proof of the lemma.
\end{proof}

Now we can state the main result of this part, i.e., the convergence of the
discrete solutions $\{q_h^*\}_{h>0}$ to the continuous optimization problem \eqref{eqn:ob}--\eqref{eqn:var}.
{With Lemma \ref{lem:cont-p2s-cont} at hand, the proof is standard and it is included only for completeness.}
\begin{theorem}\label{thm:conv-fully}
Let $\{q_h^*\}_{h>0}$ be a sequence of minimizers to problem \eqref{eqn:ob-disc}--\eqref{eqn:fully}.
Then under  \cref{ass:data1}, it contains a subsequence convergent to a minimizer of problem
\eqref{eqn:ob}--\eqref{eqn:var} in $H^1(\Omega)$.
\end{theorem}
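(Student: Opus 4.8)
The plan is to run the standard compactness argument, using \cref{lem:cont-p2s-cont} to transfer convergence of the parameter-to-state map from the discrete to the continuous setting. First I would establish a uniform $H^1(\Omega)$ bound on the minimizers $\{q_h^*\}$. Fix any smooth $\bar q\in\Uad$ and set $\bar q_h=\mathcal I_h\bar q\in\Uad_h$; nodal interpolation preserves the box constraint $c_0\le\cdot\le c_1$, since it interpolates values lying in $[c_0,c_1]$. The a priori bound derived inside the proof of \cref{lem:cont-p2s-cont} shows that $\tau\sum_{n=1}^N\|\nabla U_{h,\tau}^n(\bar q_h)\|_{L^2(\Omega)}^2$ is bounded uniformly in $h,\tau$, so the discrete fidelity at $\bar q_h$ is bounded; minimality then gives $\tfrac\gamma2\|\nabla q_h^*\|_{L^2(\Omega)}^2\le J_{\gamma,h,\tau}(q_h^*)\le J_{\gamma,h,\tau}(\bar q_h)\le c$. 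Together with $c_0\le q_h^*\le c_1$, this bounds $\|q_h^*\|_{H^1(\Omega)}$ uniformly.

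Next I would extract a subsequence (not relabeled) with $q_h^*\rightharpoonup q^*$ weakly in $H^1(\Omega)$; the compact embedding $H^1(\Omega)\hookrightarrow\hookrightarrow L^1(\Omega)$ and a further subsequence yield $q_h^*\to q^*$ in $L^1(\Omega)$ and a.e., so $q^*\in\Uad$ (the box constraint is preserved a.e.\ and the weak limit stays in $H^1(\Omega)$). Then \cref{lem:cont-p2s-cont} gives $u_{h,\tau}(q_h^*)\rightharpoonup u(q^*)$ weakly in $L^2(0,T;H^1(\Omega))$, hence weakly in $L^2(0,T;L^2(\Omega))$. Rewriting the discrete fidelity as a piecewise-constant-in-time $L^2(0,T;L^2(\Omega))$ norm (whose data interpolant converges strongly to $z^\delta$), and using weak lower semicontinuity of that norm and of the $H^1(\Omega)$ seminorm, I obtain the lower bound
\begin{equation*}
J_\gamma(q^*)\le\liminf_{h,\tau\to0^+}J_{\gamma,h,\tau}(q_h^*).
\end{equation*}

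For the reverse (recovery) direction, given any $q\in\Uad$ I would construct $\tilde q_h\in\Uad_h$ with $\tilde q_h\to q$ in $H^1(\Omega)$ and a.e.: mollify and then truncate into $[c_0,c_1]$ to get smooth box-constrained approximants, nodal-interpolate, and diagonalize. Along such a sequence the penalty converges, and the fidelity converges provided the forward-map convergence is \emph{strong}, i.e.\ $u_{h,\tau}(\tilde q_h)\to u(q)$ in $L^2(0,T;L^2(\Omega))$; I would obtain this by combining the uniform $L^2(0,T;L^2(\Omega))$ bound on $\bar\partial_\tau^\alpha(u_{h,\tau}-U_h^0)$ from the proof of \cref{lem:cont-p2s-cont} with the weak $L^2(0,T;H^1(\Omega))$ bound and a fractional Aubin--Lions compactness argument. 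Hence $\limsup_{h,\tau}J_{\gamma,h,\tau}(\tilde q_h)\le J_\gamma(q)$; chaining with minimality $J_{\gamma,h,\tau}(q_h^*)\le J_{\gamma,h,\tau}(\tilde q_h)$ and the liminf bound gives $J_\gamma(q^*)\le J_\gamma(q)$ for every $q\in\Uad$, so $q^*$ minimizes \eqref{eqn:ob}--\eqref{eqn:var}. Applying the recovery construction to $q=q^*$ itself forces $\lim_{h,\tau}J_{\gamma,h,\tau}(q_h^*)=J_\gamma(q^*)$; splitting into fidelity and penalty and invoking the two liminf inequalities shows each part converges, so $\|\nabla q_h^*\|_{L^2(\Omega)}\to\|\nabla q^*\|_{L^2(\Omega)}$, which together with the strong $L^2(\Omega)$ convergence upgrades $q_h^*\to q^*$ to strong convergence in $H^1(\Omega)$.

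The main obstacle is precisely the recovery (limsup) direction. Unlike the lower bound, it needs strong convergence of the discrete state, whereas \cref{lem:cont-p2s-cont} only supplies weak convergence; removing this gap via fractional-in-time compactness is the crux. A secondary difficulty is building the box-constraint-preserving discrete recovery sequence $\tilde q_h$ for a merely $H^1(\Omega)$ coefficient, since nodal interpolation is not directly defined in dimensions $d\ge2$ and must be routed through a mollification-and-truncation regularization that is $H^1(\Omega)$-stable.
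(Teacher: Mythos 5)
Your proposal is correct, and its skeleton coincides with the paper's proof: uniform $H^1(\Omega)$ bound from minimality against a discrete competitor (the paper simply uses the constant $q_h\equiv c_0$ rather than an interpolant), weak-$H^1$ subsequence extraction, liminf inequality from weak lower semicontinuity together with \cref{lem:cont-p2s-cont}, a recovery argument via mollification to $\Uad\cap W^{1,\infty}(\Omega)$ followed by nodal interpolation $\mathcal{I}_h$, and the final upgrade to strong $H^1(\Omega)$ convergence from norm convergence of $\|\nabla q_h^*\|_{L^2(\Omega)}$. The one genuine divergence is your treatment of the limsup (recovery) step. The paper takes a two-parameter limit — first $h,\tau\to0^+$ at fixed $\epsilon$ to get $\lim J_{\gamma,h,\tau}(q_h^\epsilon)=J_\gamma(q^\epsilon)$ in \eqref{eqn:conv-2}, citing \cref{lem:cont-p2s-cont}, then $\epsilon\to0^+$ via \cref{lem:conti-p2s} — whereas you diagonalize into a single sequence $\tilde q_h$; that is cosmetic. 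More substantively, you correctly observe that \cref{lem:cont-p2s-cont} delivers only \emph{weak} convergence of the discrete state in $L^2(0,T;H^1(\Omega))$, which by itself does not give convergence of the fidelity term, and you propose closing this with the uniform bound on $\bar\partial_\tau^\alpha(u_{h,\tau}-U_h^0)$ in $L^2(0,T;L^2(\Omega))$ (which the paper does establish inside the proof of \cref{lem:cont-p2s-cont}) plus a fractional Aubin--Lions compactness argument. This is a legitimate and in fact more careful route than the paper's terse assertion of \eqref{eqn:conv-2}; an alternative, lighter repair that stays closer to the paper's toolkit would be to note that for the fixed smooth coefficient $q^\epsilon$ the forward solver converges strongly (standard nonsmooth-data error estimates for the FEM/CQ scheme), and to control the perturbation from $q^\epsilon$ to $\mathcal{I}_hq^\epsilon$ via the $L^\infty$ convergence \eqref{eqn:int-err-inf} together with the discrete stability of \cref{lem:disc-pos}, avoiding compactness machinery altogether. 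Either way your plan is sound; the only minor redundancy is that mollification of a function with values in $[c_0,c_1]$ already lands in $[c_0,c_1]$ away from boundary-extension issues, so the truncation step is a safeguard rather than a necessity, and piecewise-linear nodal interpolation preserves the box constraint exactly as you say.
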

\begin{proof}
Since the constant function $q_h\equiv c_0$ belongs to the admissible set $\mathcal{A}_h$ for any $h$, there holds $J_{\gamma,h,\tau}
(q_h^*)\leq J_{\gamma,h,\tau}(c_0)<\infty$, from which it directly follows that the sequence $\{q_h^*\}_{h>0}$ is uniformly bounded
in the  $H^1(\Omega)$-seminorm. This and the box constraint in $\mathcal{A}_h$ imply that the sequence $\{q_h^*\in \Uad_h\}_{h>0}$
is uniformly bounded in the $H^1(\Omega) $ norm. Thus there exists a subsequence, still denoted by $\{q_h^*
\}_{h>0}$ such that it converges weakly in the $H^1(\Omega)$ to some $q^*\in \mathcal{A}$. We claim that $q^*$ is a
minimizer to problem \eqref{eqn:ob}--\eqref{eqn:var}. For any $q\in \Uad$, by the density of $W^{1,\infty}(
{\Omega})$ in $H^1(\Omega)$ \cite{EvansGariepy:2015} (e.g., by means of mollifier), there exists a sequence
$\{q^\epsilon\}_{\epsilon>0}\subset \Uad\cap W^{1,\infty}({\Omega})$ such that $\lim_{\epsilon\to0^+}
\|q^\epsilon-q\|_{H^1(\Omega)}=0$ and almost everywhere. Now let $q_h^\epsilon = \mathcal{I}_h q^\epsilon\in V_h$.
By the minimizing property of $q_h^*$, there holds
\begin{equation}\label{eqn:min-h}
  J_{\gamma,h,\tau}(q^*_h) \leq J_{\gamma,h,\tau}(q_h^\epsilon).
\end{equation}
By the weak lower semi-continuity of norms, we have $\|\nabla q^*\|_{L^2(\Omega)}  \leq \liminf_{h\to0}\|\nabla q_h^*\|_{L^2(\Omega)}$.
Similarly, by the weak convergence of $u_{h,\tau}(q_h^*)$ to $u(q^*)$ in $L^2(0,T;H^1(\Omega))$
in  \cref{lem:cont-p2s-cont} and the embedding $H^1(\Omega)\hookrightarrow L^2(\Omega)$, and the construction
of the function $z_\tau^\delta(t)=
\tau^{-1}\int_{t_{n-1}}^{t_n}z^\delta(t)\d t$, for $t\in(t_{n-1},t_n]$, $n=1,\ldots,N$,
$\lim_{\tau\to0^+}\|z^\delta - z_\tau^\delta\|_{L^2(0,T;L^2(\Omega))}=0$, we have
$\|u(q^*)-z^\delta\|^2_{L^2(0,T;L^2(\Omega))} \leq \liminf_{h,\tau\to0^+}\|u_{h,\tau}(q_h^*)-z_\tau^\delta\|_{L^2(0,T;L^2(\Omega))}
  = \liminf_{h,\tau\to0^+}\tau\sum_{n=1}^N\|U_h^n(q_h^*)-z_n^\delta\|_{L^2(\Omega)}^2$,
and thus
\begin{equation}\label{eqn:conv-1}
  J_\gamma(q^*) \leq \lim_{h,\tau\to 0^+} J_{\gamma,h,\tau}(q_h^*).
\end{equation}
Meanwhile, by {Lemma \ref{lem:cont-p2s-cont}} and the approximation property of the operator $\mathcal{I}_h$
in \eqref{eqn:int-err-inf},
\begin{equation}\label{eqn:conv-2}
  \lim_{h,\tau\to 0^+} J_{\gamma,h,\tau}(q_h^\epsilon) = J_\gamma(q^\epsilon).
\end{equation}
Thus, taking limit as $h,\tau\to0^+$ in the inequality \eqref{eqn:min-h} yields
$J_{\gamma}(q^*) \leq J_{\gamma}(q^\epsilon)$. Further, since $q^\epsilon\to q$
in $H^1(\Omega)$ and almost everywhere as $\epsilon \to 0^+$, by  \cref{lem:conti-p2s}, there holds
\begin{equation}\label{eqn:conv-3}
  \lim_{\epsilon\to0^+} J_\gamma(q^\epsilon) = J_\gamma(q).
\end{equation}
Combining the relations \eqref{eqn:conv-1}--\eqref{eqn:conv-3} yields $J_\gamma(q^*)
\leq J_\gamma(q)$ for any $q\in \Uad$. This shows the weak convergence to a minimizer $q^*$
in $H^1(\Omega)$. Meanwhile, by the weak lower semi-continuity of the norms and a standard
argument by contradiction \cite{ItoJin:2015}, we have
$\lim_{h,\tau\to0^+}\|\nabla q_h^*\|_{L^2(\Omega)}^2 = \|\nabla q^*\|_{L^2(\Omega)}^2$.
Hence, the subsequence $\{q_h^*\}_{h>0}$ converges to $q^*$ in $H^1(\Omega)$. This completes
the proof of the theorem.
\end{proof}

\begin{remark}\label{rmk:alternative}{
Note that the continuity results in \cref{lem:conti-p2s}
and \cref{lem:cont-p2s-cont} are stated with respect to almost everywhere convergence {\rm(}deduced from
the $L^1(\Omega)$ convergence of the sequence of the diffusion coefficient{\rm)}, which can be induced by other penalties with
the underlying space compactly embedding into the space $L^1(\Omega)$, including the space of bounded variation
\cite{EvansGariepy:2015}. Thus, upon minor modifications, the results in Sections \ref{sec:cont} and \ref{sec:fully} hold also for
related regularized formulations, e.g., total variation penalty, which is suitable for recovering
discontinuous coefficients; see, e.g., \cite{Gutman:1990,CasasKogutLeugering:2016} for relevant studies for in the
parabolic and elliptic cases. Also note that the terminal observation $u(T)$ may require stronger regularity condition on the source $f$ than
Assumption \ref{ass:data1} so as to ensure $u(q)\in C([0,T];L^2(\Omega))$, depending on the value of the fractional
order $\alpha$.}
\end{remark}

\begin{remark}{Due to the nonlinearity of the parameter-to-state map $q\mapsto u(q)$, the regularized output
least-squares problem \eqref{eqn:ob}--\eqref{eqn:var} is expected to be highly nonconvex. Hence, numerically one can generally only
guarantee to reach a stationary point $\hat q_h$ of the optimality system (OS) when solving the discrete optimization problem
\eqref{eqn:ob-disc}--\eqref{eqn:fully}. One important theoretical question is the convergence of the sequence $\{\hat q_h\}_{h>0}$ of discrete
stationary points for OS. Note that the convergence analysis in \cref{sec:fully} relies essentially on extracting a
convergent subsequence of discrete minimizers $\{q_h^*\}_{h>0}$ in $L^1(\Omega)$, which in turn follows from the uniform
\textit{a priori} bound on $\{q_h^*\}_{h>0}$ in $H^1(\Omega)$, induced by the $H^1(\Omega)$-seminorm penalty. Thus, one
crucial step in extending the analysis to stationary points is to derive suitable uniform \textit{a priori} bound on
$\{\hat q_h\}_{h>0}$. This might be derived from the OS as follows. Indeed, the box constraints in the admissible set
$\mathcal{A}_h$ allows bounding the discrete state $U_h^n(\hat q_h)$ (and thus also the discrete adjoint) uniformly in
the discrete $L^2(0,T;H^1(\Omega))$ norm, and then the discrete variational inequality for $\hat{q}_h$
in OS allows uniformly bounding $\hat q_h$ in suitable Sobolev norm using ``elliptic'' regularity theory. We shall refrain from a detailed derivation of OS
and the associated convergence analysis for stationary points, since the analysis in \cref{sec:err} crucially exploits the minimizing property
of the discrete minimizer and does not extend to stationary points directly.}
\end{remark}

\section{Error estimates}\label{sec:err}
Now we derive error estimates of approximations $q_h^*$ under the following regularity
on the problem data.

\begin{assumption}\label{ass:data2}
The following conditions hold.
\begin{itemize}
\item[$\rm(i)$] $u_0\in {\dH2}$, $f\in C^2([0,T];L^2(\Omega))\cap L^\infty(0,T; \dH{\beta})$ with $\beta>\max(\frac{d}{2}-1,0)$,
and exact diffusion coefficient $q^\dag\in W^{2,\infty}(\Omega)$.
\item[$\rm(ii)$] $z^\delta\in C([0,T];L^2(\Omega))\cap C^2((0,T];L^2(\Omega))$
with $t^{1-\alpha}\|z^{\delta\prime}(t)\|_{L^2(\Omega)} + t^{2-\alpha}\|z^{\delta\prime\prime}(t)\|_{L^2(\Omega)} \le c$.
\end{itemize}
\end{assumption}
Under  \cref{ass:data2}(i), there exists a unique solution $u\in C([0,T];\dot H^2(\Omega))\cap C^2((0,T];L^2(\Omega))$
and for any $s\in[0,\beta)$ and $r\in[0,2]$, there holds
\begin{equation}\label{sol-reg}
  \| u(t) \|_{\dH{2}} + t^{\frac{s}{2}\alpha} \| u(t) \|_{\dH{2+s}} + t^{1-(1-\frac{s}{2})\alpha}\|u'(t)\|_{\dot H^s(\Omega)} + t^{2-\alpha}\|u''(t)\|_{L^2(\Omega)} \le c.
\end{equation}
See \cite{SakamotoYamamoto:2011,JinLazarovZhou:2019} for a proof of the regularity estimate.

The better temporal regularity on the observation $z^\delta$ and $u(q)$ enables slightly modifying the discrete
optimization problem $J_{h,\tau,\gamma}$, instead of using $z_n^\delta:=\tau^{-1}\int_{t_{n-1}}^{t_n}z^\delta(t)\d t$.
In particular, we can employ the trapezoid rule: with $a_0=a_N=1/2$ and $a_i=1$, $i=1,\ldots, N-1$,
\begin{equation}\label{eqn:ob-disc-1}
    \min_{q_h \in \Uad_h} J_{\gamma,h,\tau}(q_h)=\frac\tau2 \sum_{n=0}^N a_i\int_\Omega |U_h^n(q_h) - z^\delta(t_n)|^2 \,\d x   + \frac\gamma2\|\nabla q_h \|_{L^2(\Omega) }^2,
\end{equation}
subject to $q_h\in\Uad_h$ and $U_h^n(q_h)$ satisfying $U_h^0=P_hu_0$ and
\begin{align}\label{eqn:fully-1}
  \bar \partial_\tau^\alpha (U_h^n(q_h)-U_h^0) - A_h(q_h) U_h^n(q_h) = P_hf(t_n),\quad \quad n=1,2,\ldots,N.
\end{align}
This change allows deriving a better rate in $\tau$ in  \cref{thm:error-q} below. Under
\cref{ass:data2,thm:ex-fully,thm:conv-fully} in \cref{sec:fully} remain valid
for problem \eqref{eqn:ob-disc-1}--\eqref{eqn:fully-1}. The goal of this part is to derive error estimates for the approximation
constructed by \eqref{eqn:ob-disc-1}--\eqref{eqn:fully-1}.

We begin with some preliminary estimates under \cref{ass:data2}(i).

\begin{lemma}\label{lem:err-1}
Let $q^\dag$ be the exact diffusion coefficient and $u\equiv u(q^\dag)$ be the solution to problem \eqref{eqn:var},
and $\{U_h^n(q^\dag)\}$ and $\{U_h^n(\mathcal{I}_hq^\dag)\}$ be the solutions to the scheme
\eqref{eqn:fully-0} corresponding to $q^\dag$ and $\mathcal{I}_hq^\dag$, respectively. Then
under \cref{ass:data2}(i), with $\ell_h=|\log h|$,
\begin{equation*}
\begin{split}
\| u(t_n)-U_h^n(q^\dag)\|_{L^2(\Omega)} &\le c(\tau t_n^{\alpha-1} + h^2\ell_h),\\
\| u(t_n)-U_h^n(\mathcal{I}_h q^\dag)\|_{L^2(\Omega)} &\le c(\tau t_n^{\alpha-1} + h^2\ell_h).
\end{split}
\end{equation*}
\end{lemma}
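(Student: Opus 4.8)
The plan is to prove the two nonsmooth data estimates by decomposing the error into a spatial semidiscretization error and a temporal discretization (convolution quadrature) error, and to control each using the solution regularity \eqref{sol-reg}. I would first treat the estimate for $U_h^n(q^\dag)$ and then transfer it to $U_h^n(\mathcal{I}_h q^\dag)$ by estimating the difference between the two discrete solutions via a perturbation argument based on \cref{lem:disc-pos}. Throughout, the key is that under \cref{ass:data2}(i) the exact solution $u(q^\dag)$ has the singular-in-time regularity $\|u'(t)\|_{L^2(\Omega)}\le c t^{\alpha-1}$ and $\|u''(t)\|_{L^2(\Omega)}\le c t^{\alpha-2}$, together with $u(t)\in\dot H^2(\Omega)$ uniformly in $t$; these bounds drive the factors $\tau t_n^{\alpha-1}$ and $h^2$ appearing in the statement, and the logarithmic factor $\ell_h=|\log h|$ arises from a borderline Sobolev/resolvent estimate in the spatial part.

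For the spatial error, I would introduce the semidiscrete-in-space solution and write $u(t_n)-U_h^n(q^\dag)=(u(t_n)-u_h(t_n))+(u_h(t_n)-U_h^n(q^\dag))$, i.e. split into a spatial error and a temporal error for the spatially discrete problem. For the spatial part, the standard tool is the Ritz projection $R_h$ associated with the bilinear form $(q^\dag\nabla\cdot,\nabla\cdot)$, giving the familiar splitting $u-u_h=(u-R_hu)+(R_hu-u_h)$. The projection error is $O(h^2)\|u\|_{\dot H^2(\Omega)}$, which is uniformly bounded by \eqref{sol-reg}; the factor $\ell_h$ enters precisely when one passes to the $L^2(\Omega)$ norm of the defect $\partial_t^\alpha(u-R_hu)$ using a maximal-$L^p$-type or resolvent estimate near the smoothing borderline, a phenomenon well documented for subdiffusion nonsmooth data analysis. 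For the temporal part, I would represent $u_h-U_h^n$ through the solution operators $E_h(t)$ and their backward-Euler CQ analogues and invoke the sharp CQ error bound $\|u_h(t_n)-U_h^n\|_{L^2(\Omega)}\le c\tau t_n^{\alpha-1}\|A_h(q^\dag)u_{0,h}\|+\cdots$, which again only needs $u''(t)\le ct^{\alpha-2}$ from \eqref{sol-reg}; these are the nonstandard nonsmooth-data CQ estimates the introduction refers to.

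To pass from $q^\dag$ to $\mathcal{I}_hq^\dag$, set $w_h^n=U_h^n(q^\dag)-U_h^n(\mathcal{I}_hq^\dag)$, which satisfies $w_h^0=0$ and
\begin{equation*}
\bar\partial_\tau^\alpha w_h^n - A_h(\mathcal{I}_hq^\dag)w_h^n = \big(A_h(q^\dag)-A_h(\mathcal{I}_hq^\dag)\big)U_h^n(q^\dag),\quad n=1,\dots,N.
\end{equation*}
The right-hand side is controlled by $\|q^\dag-\mathcal{I}_hq^\dag\|_{L^\infty(\Omega)}\le ch^2\|q^\dag\|_{W^{2,\infty}(\Omega)}$ from \eqref{eqn:int-err-inf} (here \cref{ass:data2}(i) supplies $q^\dag\in W^{2,\infty}(\Omega)$), multiplied by a uniform bound on $\nabla U_h^n(q^\dag)$ obtained from \cref{lem:disc-pos}. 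Applying the stability/positivity estimate of \cref{lem:disc-pos} to $w_h^n$ then yields $\|w_h^n\|_{L^2(\Omega)}\le ch^2$, which is absorbed into the $h^2\ell_h$ term, completing the second estimate from the first.

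The main obstacle I anticipate is the spatial $L^2(\Omega)$ estimate at the borderline regularity producing the $\ell_h$ factor: converting $\dot H^2(\Omega)$ regularity of $u$ into an $O(h^2\ell_h)$ bound in $L^2(\Omega)$ for the fractional-in-time defect requires a careful logarithmic-in-$h$ resolvent or smoothing estimate, rather than an energy argument, because the Caputo derivative precludes the usual integration-by-parts/duality tricks. Coupling this with the sharp temporal $\tau t_n^{\alpha-1}$ CQ bound, which is uniform up to $t_n$ but degenerates as $t_n\to0$, is the delicate part; the perturbation step from $q^\dag$ to $\mathcal{I}_hq^\dag$, by contrast, is routine once \cref{lem:disc-pos} and the interpolation estimate \eqref{eqn:int-err-inf} are in hand.
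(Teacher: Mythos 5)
Your overall architecture matches the paper's: the first bound is taken as the known nonsmooth-data error estimate for a fixed coefficient (the paper simply quotes it from the reference \cite{JinLazarovZhou:SISC2016}, and your Ritz-projection-plus-CQ reconstruction of it is consistent with how that result is proved), and the second bound is obtained by perturbing from $q^\dag$ to $\mathcal{I}_hq^\dag$, with the commutator source controlled through $\|q^\dag-\mathcal{I}_hq^\dag\|_{L^\infty(\Omega)}\le ch^2\|q^\dag\|_{W^{2,\infty}(\Omega)}$ from \eqref{eqn:int-err-inf}. However, the perturbation step---which you call routine---contains a genuine gap: you invoke \cref{lem:disc-pos} to conclude $\|w_h^n\|_{L^2(\Omega)}\le ch^2$. \cref{lem:disc-pos} is an energy/positivity estimate that controls only the time-averaged quantity $\tau\sum_{n=1}^N\|\nabla w_h^n\|_{L^2(\Omega)}^2$; the positivity of the quadrature $\sum_n(\bar\partial_\tau^\alpha w_h^n,w_h^n)\ge 0$ is available only after summation in $n$, so no choice of test function extracts a bound at an individual time level. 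The lemma being proved is a pointwise-in-time statement, $\|u(t_n)-U_h^n(\mathcal{I}_hq^\dag)\|_{L^2(\Omega)}\le c(\tau t_n^{\alpha-1}+h^2\ell_h)$ for \emph{every} $n$, and an $\ell^2$-in-time bound of size $h^2$ does not imply it. The same defect affects your treatment of the source: you need $\|\nabla U_h^n(q^\dag)\|_{L^2(\Omega)}$ (or $\|\nabla U_h^n(\mathcal{I}_hq^\dag)\|_{L^2(\Omega)}$) uniformly in $n$, and \cref{lem:disc-pos} again only yields its time average.

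The paper closes exactly this hole with a smoothing argument rather than an energy argument. It writes the difference via the fully discrete Duhamel formula, $\rho_h^n=\tau\sum_{i=1}^n E_{h,\tau}^{n-i}\,[A_h(q^\dag)-A_h(\mathcal{I}_hq^\dag)]U_h^i(\mathcal{I}_hq^\dag)$, and uses the solution-operator smoothing estimate $\|E_{h,\tau}^n v_h\|_{L^2(\Omega)}\le ct_{n+1}^{-1+\frac{\alpha}{2}}\|v_h\|_{H^{-1}(\Omega)}$ (obtained by commuting half powers of $A_h(q^\dag)$ through $E_{h,\tau}^n$, cited from \cite{JinLiZhou:nonlinear}). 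The divergence-form perturbation is then measured in $H^{-1}(\Omega)$---where it naturally lives---giving $\|[A_h(\mathcal{I}_hq^\dag)-A_h(q^\dag)]U_h^i(\mathcal{I}_hq^\dag)\|_{H^{-1}(\Omega)}\le ch^2\|q^\dag\|_{W^{2,\infty}(\Omega)}\|\nabla U_h^i(\mathcal{I}_hq^\dag)\|_{L^2(\Omega)}$, and the integrable singularity accumulates as $\tau\sum_i t_i^{-1+\frac{\alpha}{2}}\le c$, producing the pointwise bound $\|\rho_h^n\|_{L^2(\Omega)}\le ch^2$ at every $n$. Your variant of the error equation (leading operator $A_h(\mathcal{I}_hq^\dag)$, source involving $U_h^n(q^\dag)$) is symmetric to the paper's and equally workable, but you must replace \cref{lem:disc-pos} by this Duhamel-plus-smoothing argument, or an equivalent discrete maximal-regularity estimate, to obtain the level-by-level bound the statement requires.
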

\begin{proof}
The first estimate is immediate from \cite{JinLazarovZhou:SISC2016}
\begin{equation*}
\begin{split}
\|u(t_n)-U_h^n(q^\dag)\|_{L^2(\Omega)} &\le c h^2 \ell_h\Big(\| A(q^\dag) u_0 \|_{L^2(\Omega)} + \| f \|_{L^\infty(0,T;\dot H^\beta(\Omega))}\Big)\\
& + c \tau \Big( t_n^{\alpha-1} \| A(q^\dag) u_0+f(0) \|_{L^2(\Omega)} +  \int_0^{t_n}(t_n-s)^{\alpha-1}\| f'(s)\|_{L^2(\Omega)}\,\d s\Big).
\end{split}
\end{equation*}
To show the second estimate, we bound $\rho_h^n:=U_h^n(q^\dag)  - U_h^n(\mathcal{I}_h q^\dag)$, which satisfies $\rho_h^0 =0$ and
\begin{equation*}
 \bar \partial_\tau^\alpha \rho_h^n - A_h(q^\dag)  \rho_h^n =  [A_h(q^\dag)-A_h(\mathcal{I}_hq^\dag)]U_h^n(\mathcal{I}_hq^\dag),\quad n=1,2,\ldots,N,
\end{equation*}
where $A_h(q^\dag),A_h(\mathcal{I}_hq^\dag):X_h\to X_h$ are the discrete analogues of the elliptic operators $A(q^\dag)$
and $A(\mathcal{I}_hq^\dag) $ associated with $q^\dag$ and $\mathcal{I}_hq^\dag$, respectively. Thus, it can be written as
\begin{equation}\label{eqn:sol-repr}
 \rho_h^n = \tau \sum_{i=1}^n E_{h,\tau}^{n-i} [A_h(q^\dag)-A_h(\mathcal{I}_hq^\dag)]U_h^i(\mathcal{I}_hq^\dag),
\end{equation}
where $E_{h,\tau}^n$ is the fully discrete solution operator, which satisfies that for all $ v_h\in X_h$ \cite{JinLiZhou:nonlinear},
\begin{align*}
 \|  E_{h,\tau}^n v_h\|_{ {L^2}(\Omega) } & =   \|  A_h(q^\dag)^{\frac12}E_{h,\tau}^n (A_h(q^\dag)^{-\frac12}v_h)\|_{ {L^2}(\Omega) }\\
 &\le  ct_{n+1}^{-1+\frac{\alpha}{2}} \|A_h(q^\dag)^{-\frac12}v_h\|_{ {L^2}(\Omega) }
 \le c t_{n+1}^{-1+\frac{\alpha}{2}} \| v_h \|_{H^{-1}(\Omega)}.
\end{align*}
It follows from this estimate and the solution representation \eqref{eqn:sol-repr} that
\begin{equation*}
 \| \rho_h^n \|_{L^2(\Omega)} \le c\tau\sum_{i=1}^n t_n^{-1+\frac{\alpha}{2}} \|[A_h(\mathcal{I}_hq^\dag)-A_h(q^\dag)]U_h^n(\mathcal{I}_hq^\dag)\|_{H^{-1}(\Omega)}.
\end{equation*}
Further, the definitions of $P_h$ and $A_h$ and the $H^1(\Omega)$-stability of $P_h$ yield
\begin{equation*}
\begin{split}
 & \|[A_h(\mathcal{I}_hq^\dag)-A_h(q^\dag)]U_h^n(\mathcal{I}_hq^\dag)\|_{H^{-1}(\Omega)}
 = \sup_{v\in {\dot H}^1}\frac{\langle[A_h(\mathcal{I}_hq^\dag)-A_h(q^\dag)]U_h^n(\mathcal{I}_hq^\dag),v\rangle}{\| v \|_{{\dot{H}^1(\Omega)}}}\\
= & \sup_{v\in {\dot H}^1}\frac{\langle(q^\dag-\mathcal{I}_h q^\dag)\nabla U_h^n(\mathcal{I}_hq^\dag),\nabla P_hv\rangle}{\|v\|_{{\dot{H}^1(\Omega)}}}
\le  c h^2\|q^\dag\|_{W^{2,\infty}(\Omega)} \| \nabla U_h^n(\mathcal{I}_hq^\dag)  \|_{L^2(\Omega)},
\end{split}
\end{equation*}
since $q\in W^{2,\infty}(\Omega)$ by \cref{ass:data2}(i)
and \eqref{eqn:int-err-inf}. Thus,
$\| \rho_h^n \|_{L^2(\Omega)} \le ch^2 \tau\sum_{i=1}^n t_n^{-1+\frac{\alpha}{2}}\le ch^2\int_0^Tt^{-1+\frac{\alpha}{2}}\d t\leq ch^2$.
This and the triangle inequality completes the proof of the lemma.
\end{proof}

Next we give an error estimate on the CQ approximation of the fractional derivative. The proof is
similar to \cite[Lemma 4.2]{JinZhou:iterative}, and given in Appendix \ref{app:deriv} for completeness.
\begin{lemma}\label{lem:deriv-approx}
Let $q^\dag$ be the exact diffusion coefficient and $u\equiv u(q^\dag)$ be the solution to problem \eqref{eqn:var}.
Then under \cref{ass:data2}, there holds
\begin{equation*}
\begin{split}
  \| \bar \partial_\tau^\alpha (u(t_n) - u_0) - \partial_t^\alpha (u(t_n) - u_0) \|_{L^2(\Omega)}
           \le c\tau t_n^{-1}.
\end{split}
\end{equation*}
\end{lemma}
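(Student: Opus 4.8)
The plan is to estimate the consistency error of the backward Euler convolution quadrature applied to the Caputo/Riemann--Liouville fractional derivative, exploiting the regularity estimate \eqref{sol-reg} rather than treating $u$ as arbitrary. Since $u(0)=u_0$, the quantity $w(t):=u(t)-u_0$ satisfies $w(0)=0$, so its Caputo and Riemann--Liouville derivatives of order $\alpha$ coincide; this reduces the task to bounding $\|\bar\partial_\tau^\alpha w(t_n)-\partial_t^\alpha w(t_n)\|_{L^2(\Omega)}$. The standard route is to pass to the generating-function/Laplace representation: the CQ error at $t_n$ can be written as a contour integral involving the difference between the true symbol $z^\alpha$ and the discretized symbol $\delta_\tau(e^{-z\tau})^\alpha$, where $\delta_\tau(\zeta)=(1-\zeta)/\tau$ is the backward Euler characteristic polynomial. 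The first-order accuracy of backward Euler gives $|\delta_\tau(e^{-z\tau})^\alpha-z^\alpha|\le c\tau|z|^{\alpha+1}$ on a suitable sector, and one then balances this against the Laplace transform $\widehat{w}(z)$ using the smoothing/regularity bounds in \eqref{sol-reg}.

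First I would write $\partial_t^\alpha w(t_n)$ and its CQ approximation through the inverse Laplace transform along a Hankel contour $\Gamma_{\theta,\delta}$, and subtract to obtain an explicit representation of the error in terms of $(\delta_\tau(e^{-z\tau})^\alpha-z^\alpha)\widehat{w}(z)$. Next I would split the contour into the two rays $|z|\ge 1/t_n$ and the arc $|z|\le 1/t_n$, as is standard in CQ error analysis. On each piece I would insert the symbol bound $|\delta_\tau(e^{-z\tau})^\alpha-z^\alpha|\le c\tau|z|^{1+\alpha}$ together with the regularity-induced decay of $\widehat{w}(z)$; from \eqref{sol-reg} one reads off that $w$ has the temporal behavior needed to control $|z|^{1+\alpha}\widehat{w}(z)$ so that the integral over the resolvent set converges and produces the factor $\tau t_n^{-1}$ after the substitution $|z|\sim 1/t_n$ on the arc. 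The key input is that $u''(t)$ behaves like $t^{-2+\alpha}$ and $u'(t)$ like $t^{-1+(1-s/2)\alpha+\cdots}$ near $t=0$, which is exactly what makes the Laplace-domain bounds summable and yields the $O(\tau t_n^{-1})$ rate rather than a weaker one.

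The main obstacle I anticipate is handling the limited temporal regularity of $w$ near $t=0$: the solution is smooth for $t>0$ but only H\"older-type regular up to the initial layer, so a naive Taylor-expansion estimate of the quadrature error would fail. The delicate point is therefore to carry the singular weights from \eqref{sol-reg} through the contour estimates carefully, ensuring the arc contribution does not blow up and that the resulting bound is uniform in $n$ after multiplication by $t_n$. Because the statement asserts the estimate is essentially identical to \cite[Lemma 4.2]{JinZhou:iterative}, I expect the cleanest path is to verify that the hypotheses of that lemma are met by $w=u-u_0$ under \cref{ass:data2}, namely that $w(0)=0$ and that $w$ enjoys the stated smoothing bounds, and then invoke the contour argument there verbatim; the only genuinely new check is confirming that the $C^2$-in-time and weighted second-derivative assumptions in \cref{ass:data2} supply precisely the decay of $\widehat w(z)$ that the $\tau t_n^{-1}$ rate requires.
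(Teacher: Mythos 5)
Your overall architecture---reduce to $w=u-u_0$ with $w(0)=0$ so that Caputo and Riemann--Liouville derivatives coincide, represent the CQ consistency error as a contour integral involving $\delta_\tau(e^{-z\tau})^\alpha-z^\alpha$, use the first-order symbol bound $c\tau|z|^{1+\alpha}$, and split the contour at $|z|\sim 1/t_n$---matches the paper's proof in Appendix \ref{app:deriv}, and the scalar heuristic ($|\widehat w(z)|\sim |z|^{-1-\alpha}$ against $c\tau|z|^{1+\alpha}$, integrated with $\delta=c/t_n$, giving $c\tau t_n^{-1}$) is the right bookkeeping. The genuine gap is in how you propose to obtain the decay of $\widehat w(z)$: you want to ``read it off'' from the time-domain bounds \eqref{sol-reg}. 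Those bounds control $\int_0^\infty e^{-t\,\Re z}\|\cdot\|_{L^2(\Omega)}\,\d t$ only for $\Re z>0$, whereas the Hankel contour $\Gamma_{\theta,\delta}$ with $\theta\in(\frac{\pi}{2},\pi)$ runs into the left half-plane, where $e^{-zt}$ grows; time-domain regularity alone gives neither analyticity of $\widehat w$ in the sector $\Sigma_\theta$ nor bounds on it there, and both are needed to deform the contour and harvest the $e^{zt_n}$ decay that produces the factor $t_n^{-1}$. Worse, your stated key input $\|u''(t)\|_{L^2(\Omega)}\le ct^{\alpha-2}$ makes $u''$ non-integrable at $t=0$, so $\widehat{w''}$ does not exist as an absolutely convergent integral and the ``summability'' you invoke cannot be extracted this way.

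The missing idea---and what the paper actually does---is to bypass estimates on $\widehat w$ entirely and use the PDE: Laplace transforming $\partial_t^\alpha y - Ay + Au_0 = f$ with $y=u-u_0$ yields the sectorial representation $\widehat y(z)=(z^\alpha-A)^{-1}(\widehat f(z)-z^{-1}Au_0)$, so the error kernel is the \emph{operator-valued} $K(z)=(\delta_\tau(e^{-z\tau})^{\alpha}-z^\alpha)(z^\alpha-A)^{-1}$, and the resolvent estimate \eqref{eqn:resol} supplies the $|z|^{-\alpha}$ gain on all of $\Sigma_\theta$, including the left half-plane. The data then enter through the splitting $f(t)=f(0)+tf'(0)+{_0I_t^2f''}(t)$, with $Au_0-f(0)\in L^2(\Omega)$ guaranteed by \cref{ass:data2}(i); combining \eqref{eqn:gen-1} on $\Gamma_{\theta,\delta}^\tau$, \eqref{eqn:est-kernel} on the remainder, and the choice $\delta=c/t_n$ gives $c\tau t_n^{-1}$. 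Note that \eqref{sol-reg} is itself a \emph{consequence} of this representation, not a usable substitute for it. Your fallback of invoking \cite[Lemma 4.2]{JinZhou:iterative} verbatim is legitimate in principle (the paper states its proof is adapted from that lemma), but the hypotheses to verify are data conditions ($u_0\in\dot H^2(\Omega)$, $f\in C^2([0,T];L^2(\Omega))$), not ``smoothing bounds on $w$'' as you describe; as written, your verification step mischaracterizes what must be checked, and the direct argument you sketch would not close.
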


The next lemma gives a quadrature error estimate.
\begin{lemma}\label{lem:quadrature}
Let $q^\dag$ be the exact diffusion coefficient and $u\equiv u(q^\dag)$ the corresponding solution to problem \eqref{eqn:var}. Then
under  \cref{ass:data2},
\begin{equation*}
  \sum_{n=0}^Na_i\|u(t_n)-z^\delta(t_n)\|^2_{L^2(\Omega)} \leq c(\delta^2 + \tau^{1+\alpha}).
\end{equation*}
\end{lemma}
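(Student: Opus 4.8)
The plan is to view the weighted sum $\tau\sum_{n=0}^N a_i\|u(t_n)-z^\delta(t_n)\|_{L^2(\Omega)}^2$ (the fidelity term of the objective \eqref{eqn:ob-disc-1} evaluated at the exact solution $u=u(q^\dag)$) as the composite trapezoidal-rule approximation of the integral $I:=\int_0^T\|u(t)-z^\delta(t)\|_{L^2(\Omega)}^2\,\d t$. Since $u=u(q^\dag)$ is exactly the clean data $z^*$, we have $I=\|z^*-z^\delta\|_{L^2(0,T;L^2(\Omega))}^2=\delta^2$, so it suffices to control the quadrature error $|\tau\sum_{n=0}^N a_i g(t_n)-I|$ by $c\tau^{1+\alpha}$, where $g(t):=\|u(t)-z^\delta(t)\|_{L^2(\Omega)}^2$.

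First I would record the smoothness of the scalar function $g$. By \cref{ass:data2}, both $u$ and $z^\delta$ lie in $C([0,T];L^2(\Omega))\cap C^2((0,T];L^2(\Omega))$, so $g\in C([0,T])\cap C^2((0,T])$, with $g'(t)=2(u'(t)-z^{\delta\prime}(t),u(t)-z^\delta(t))$ and $g''(t)=2(u''(t)-z^{\delta\prime\prime}(t),u(t)-z^\delta(t))+2\|u'(t)-z^{\delta\prime}(t)\|_{L^2(\Omega)}^2$. The regularity estimate \eqref{sol-reg} with $s=0$ gives $\|u'(t)\|_{L^2(\Omega)}\le ct^{\alpha-1}$ and $\|u''(t)\|_{L^2(\Omega)}\le ct^{\alpha-2}$, while \cref{ass:data2}(ii) gives the same bounds for $z^{\delta\prime}$ and $z^{\delta\prime\prime}$; together with the uniform bound $\|u(t)-z^\delta(t)\|_{L^2(\Omega)}\le c$ these yield the key singular estimates $|g'(t)|\le ct^{\alpha-1}$ and $|g''(t)|\le c(t^{\alpha-2}+t^{2\alpha-2})\le ct^{\alpha-2}$ on $(0,T]$.

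I would then split the quadrature error into the first subinterval $[0,\tau]$ and the remaining ones. On each $[t_{n-1},t_n]$ with $n\ge2$ the standard trapezoidal (Peano-kernel) bound gives an error $\le c\tau^3\max_{[t_{n-1},t_n]}|g''|\le c\tau^3 t_{n-1}^{\alpha-2}$; summing and writing $t_{n-1}=(n-1)\tau$ produces $c\tau^{1+\alpha}\sum_{m\ge1}m^{\alpha-2}$, a convergent series precisely because $\alpha<1$, hence bounded by $c\tau^{1+\alpha}$. On $[0,\tau]$ the bound on $g''$ is not integrable, so there I would instead invoke the (integrable) first-derivative bound: $|g(t)-g(0)|\le\int_0^t|g'(s)|\,\d s\le ct^\alpha$ yields both $\big|\int_0^\tau(g-g(0))\,\d t\big|\le c\tau^{1+\alpha}$ and $\tfrac\tau2|g(\tau)-g(0)|\le c\tau^{1+\alpha}$, so the trapezoidal error on the initial cell is likewise $O(\tau^{1+\alpha})$. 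Adding the two contributions gives the claimed quadrature bound, and combining with $I=\delta^2$ finishes the lemma.

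The main obstacle is the $t^{\alpha-2}$ singularity of $g''$ at the origin: the naive $O(\tau^2)$ trapezoidal estimate fails, and the analysis must be localized, handling the initial cell with the weaker derivative $g'$ and exploiting that the residual sum $\sum_{m\ge1}m^{\alpha-2}$ converges only because the fractional order $\alpha$ is strictly below $1$. This is exactly where the limited temporal regularity of the subdiffusion solution enters, and it is what pins down the exponent $1+\alpha$.
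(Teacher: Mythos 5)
Your proposal is correct and follows essentially the same route as the paper's proof: both reduce the claim to a trapezoidal quadrature error for $g(t)=\|u(t)-z^\delta(t)\|_{L^2(\Omega)}^2$, use \eqref{sol-reg} and \cref{ass:data2}(ii) to get the singular bounds $|g'(t)|\le ct^{\alpha-1}$, $|g''(t)|\le ct^{\alpha-2}$, treat the first cell with the integrable first-derivative bound, and sum the $O(\tau^3 t_{n-1}^{\alpha-2})$ errors on the remaining cells to obtain $O(\tau^{1+\alpha})$, concluding via the definition of the noise level $\delta$.
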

\begin{proof}
Let $g(t)=z^\delta(t)-u(t)$. By the regularity estimate \eqref{sol-reg} and Assumption
\ref{ass:data2},
\begin{equation}\label{eqn:bdd-g}
\|g\|_{C([0,T];L^2(\Omega))}\leq c, \quad \|g'(t)\|_{L^2(\Omega)} \leq c t^{\alpha-1} \quad \mbox{and}\quad
\|g''(t)\|_{L^2(\Omega)}\leq ct^{\alpha-2}.
\end{equation}
By the triangle inequality, we have
\begin{align*}
  & \Big|\tau \sum_{n=0}^Na_i \|  g(t_n) \|_{L^2(\Omega)}^2 - \sum_{n=1}^N \int_{t_{n-1}}^{t_n}\| g(t)\|_{L^2(\Omega)}^2\d t\Big|\\
\leq & \sum_{n=1}^N \Big| \int_{t_{n-1}}^{t_n}\|g(t)\|_{L^2(\Omega)}^2\d t - \frac{\tau}{2}\big(\|g(t_{n-1})\|_{L^2(\Omega)}^2+\|g(t_n)\|_{L^2(\Omega)}^2\big)\Big|:=\sum_{n=1}^N {\rm I}_n.
\end{align*}
Next we analyze  the two cases $n=1$ and $n>1$ separately. First, for the case $n=1$,
\begin{equation*}
  {\rm I}_1 \leq \Big|\int_{0}^\tau(\|g(t)\|_{L^2(\Omega)}^2 - \|g(t_0)\|^2_{L^2(\Omega)})\d t\Big| + \Big|\int_{0}^\tau(\|g(t)\|_{L^2(\Omega)}^2 - \|g(\tau)\|^2_{L^2(\Omega)})\d t\Big|:={\rm I}_{1,0} + {\rm I}_{1,1}.
\end{equation*}
Using  \eqref{eqn:bdd-g}, the term ${\rm I}_{1,0}$ can be bounded by
\begin{align*}
  {\rm I}_{1,0} &   \leq c\| g(t)\|_{C([0,\tau];L^2(\Omega))} \int_{0}^\tau\|g(0)-g(t)\|_{L^2(\Omega)}\d t
  \leq  c\tau \int^\tau_0\|g'(s) \|_{L^2(\Omega)}\d s \leq c\tau^{1+\alpha}.
\end{align*}
Similarly, we can deduce ${\rm I}_{1,1}\leq c\tau^{1+\alpha}$. Further, for the case $n>1$, $g(t)$ is smooth,
and thus by standard interpolation error estimates, for some $\xi_n\in [t_{n-1},t_n]$, there holds
$
{\rm I}_n \leq c\tau^2 \int_{t_{n-1}}^{t_n} \big|\frac{\d^2}{\d t^2}\|g(t)\|^2_{L^2(\Omega)}|_{t=\xi_n}\big|\d t.
$
By the bounds in \eqref{eqn:bdd-g},
$  \big|\frac{\d^2}{\d t^2}\|g(\xi_n)\|^2_{L^2(\Omega)}\big| \leq 2(\|g'(\xi_n)\|_{L^2(\Omega)}^2+\|g(\xi_n)\|_{L^2(\Omega)}\|g''(\xi_n)\|_{L^2(\Omega)})\leq ct_{n-1}^{\alpha-2}.
$
The last two estimates together imply
\begin{equation*}
 \sum_{n=2}^N {\rm I}_n \leq c\tau^3\sum_{n=2}^N t_{n-1}^{\alpha-2}\leq c\tau^{1+\alpha}.
\end{equation*}
Then the assertion follows from the triangle inequality and the definition of the noise level.
\end{proof}
\begin{remark}
One can only obtain an $O(\tau+\delta^2)$ rate the discrete objective function $J_{\gamma,h,\tau}$ in
\eqref{eqn:ob-disc}. The $\alpha$ exponent in  \cref{lem:quadrature} reflects the limited temporal
smoothing property of the solution $u(t)$: the larger the fractional order $\alpha$ is, the smoother
in time the solution $u(t)$ becomes and the faster the quadrature error decays.
\end{remark}

The next result gives \textit{a priori} bounds on $q_h^*$ and
the approximation $U_h^n(q_h^*)$. This result will play a crucial role in the analysis below.
\begin{lemma}\label{lem:err-2}
Let $q^\dag$ be the exact coefficient and $u\equiv u(q^\dag)$ the solution to problem \eqref{eqn:var}.
Let $q_h^*\in \Uad_h$ be the solution to problem \eqref{eqn:ob-disc-1}--\eqref{eqn:fully-1}, and
$\{U_h^n(q_h^*)\}_{n=1}^N$ the fully discrete solution to problem \eqref{eqn:fully}. Then under
 \cref{ass:data2}, with $\ell_h=|\log h|$, there holds
\begin{equation*}
  \tau \sum_{n=1}^N \| U_h^n(q_h^*) - u(t_n) \|_{L^2(\Omega)}^2 + \gamma \| \nabla q_h^* \|_{L^2(\Omega)}^2
  \le c  (\tau^{1+\alpha}  + h^4\ell_h^2 + \delta^2+\gamma).
\end{equation*}
\end{lemma}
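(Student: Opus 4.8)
The plan is to exploit the minimizing property of $q_h^*$ with the nodal interpolant $\mathcal{I}_h q^\dag$ as a competitor. Since $q^\dag\in W^{2,\infty}(\Omega)\subset C(\overline\Omega)$ satisfies the box constraint $c_0\le q^\dag\le c_1$, its continuous piecewise linear nodal interpolant takes values in $[c_0,c_1]$ at all nodes, and by convexity of linear interpolation stays in $[c_0,c_1]$ on each simplex, so $\mathcal{I}_h q^\dag\in\Uad_h$ is admissible. The optimality of $q_h^*$ then gives $J_{\gamma,h,\tau}(q_h^*)\le J_{\gamma,h,\tau}(\mathcal{I}_h q^\dag)$. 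The strategy is to bound the right-hand side entirely by the target quantity $\tau^{1+\alpha}+h^4\ell_h^2+\delta^2+\gamma$, and then to transfer the resulting control on the discrete discrepancy $\tau\sum_n a_n\|U_h^n(q_h^*)-z^\delta(t_n)\|_{L^2(\Omega)}^2$ and on $\gamma\|\nabla q_h^*\|_{L^2(\Omega)}^2$ back to the stated quantity via the triangle inequality.

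First I would estimate $J_{\gamma,h,\tau}(\mathcal{I}_h q^\dag)$. For the discrepancy term I split $\|U_h^n(\mathcal{I}_h q^\dag)-z^\delta(t_n)\|_{L^2(\Omega)}\le \|U_h^n(\mathcal{I}_h q^\dag)-u(t_n)\|_{L^2(\Omega)}+\|u(t_n)-z^\delta(t_n)\|_{L^2(\Omega)}$ and square using $(a+b)^2\le 2a^2+2b^2$. Summed with the trapezoidal weights, the second piece is controlled by \cref{lem:quadrature} by $c(\delta^2+\tau^{1+\alpha})$, while the first piece is handled by the nonsmooth-data bound $\|U_h^n(\mathcal{I}_h q^\dag)-u(t_n)\|_{L^2(\Omega)}\le c(\tau t_n^{\alpha-1}+h^2\ell_h)$ from \cref{lem:err-1}. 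The penalty term satisfies $\gamma\|\nabla \mathcal{I}_h q^\dag\|_{L^2(\Omega)}^2\le c\gamma$, since $\|\nabla \mathcal{I}_h q^\dag\|_{L^2(\Omega)}\le \|\nabla q^\dag\|_{L^2(\Omega)}+ch\|q^\dag\|_{H^2(\Omega)}\le c$ by the interpolation estimate \eqref{eqn:int-err-2} and $q^\dag\in W^{2,\infty}(\Omega)\subset H^2(\Omega)$.

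The one genuinely technical point, and the main obstacle, is controlling the accumulated singular temporal error $\tau\sum_{n=1}^N(\tau t_n^{\alpha-1})^2=\tau^{1+2\alpha}\sum_{n=1}^N n^{2\alpha-2}$, where the singular weight $t_n^{\alpha-1}$ reflects the limited temporal regularity of $u$ near $t=0$ recorded in \eqref{sol-reg}. Here I would distinguish three regimes according to the sign of $2\alpha-2+1$: for $\alpha<\tfrac12$ the series $\sum_n n^{2\alpha-2}$ converges and the sum is $O(\tau^{1+2\alpha})$; for $\alpha=\tfrac12$ it produces a logarithmic factor, giving $O(\tau^2|\log\tau|)$; and for $\alpha>\tfrac12$ one has $\sum_{n=1}^N n^{2\alpha-2}\sim N^{2\alpha-1}=(T/\tau)^{2\alpha-1}$, giving $O(\tau^2)$. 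In every case the bound is dominated by $c\tau^{1+\alpha}$ for $\alpha\in(0,1)$, using $1+2\alpha\ge 1+\alpha$ in the first regime and $2\ge 1+\alpha$ in the last. The remaining spatial contribution $\tau\sum_{n=1}^N(h^2\ell_h)^2\le Th^4\ell_h^2$ is immediate, and combining everything yields $J_{\gamma,h,\tau}(\mathcal{I}_h q^\dag)\le c(\tau^{1+\alpha}+h^4\ell_h^2+\delta^2+\gamma)$.

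Finally I would transfer this to the stated quantity. The penalty part is immediate, $\gamma\|\nabla q_h^*\|_{L^2(\Omega)}^2\le 2J_{\gamma,h,\tau}(q_h^*)\le 2J_{\gamma,h,\tau}(\mathcal{I}_h q^\dag)$. For the discrepancy, I split $\|U_h^n(q_h^*)-u(t_n)\|_{L^2(\Omega)}\le \|U_h^n(q_h^*)-z^\delta(t_n)\|_{L^2(\Omega)}+\|z^\delta(t_n)-u(t_n)\|_{L^2(\Omega)}$; since the trapezoidal weights satisfy $a_n\ge \tfrac12$, the left-hand side $\tau\sum_{n=1}^N\|U_h^n(q_h^*)-u(t_n)\|_{L^2(\Omega)}^2$ is bounded by a constant multiple of $\tau\sum_{n=0}^N a_n\|U_h^n(q_h^*)-z^\delta(t_n)\|_{L^2(\Omega)}^2$, which is at most $2J_{\gamma,h,\tau}(q_h^*)$, plus $\tau\sum_{n=0}^N a_n\|z^\delta(t_n)-u(t_n)\|_{L^2(\Omega)}^2$, which is at most $c(\delta^2+\tau^{1+\alpha})$ by \cref{lem:quadrature}. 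Adding the two contributions gives the claimed estimate.
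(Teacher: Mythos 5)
Your proposal is correct and follows essentially the same route as the paper: compare $J_{\gamma,h,\tau}(q_h^*)$ against the competitor $\mathcal{I}_h q^\dag$ via the minimizing property, bound the competitor's discrepancy with \cref{lem:err-1} and \cref{lem:quadrature}, control the penalty by interpolation stability, and transfer back by the triangle inequality. Your three-regime case analysis of $\tau^{1+2\alpha}\sum_{n=1}^N n^{2\alpha-2}$ is a more laborious but equivalent way to obtain the paper's uniform bound $\tau^3\sum_{n=1}^N t_n^{2\alpha-2}\le cT^\alpha\tau^{1+\alpha}\sum_{n\ge1} n^{\alpha-2}\le c\tau^{1+\alpha}$, and your explicit verification that $\mathcal{I}_h q^\dag\in\Uad_h$ is a detail the paper leaves implicit.
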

\begin{proof}
By the minimizing property of $q_h^*\in \Uad_h$ and $\mathcal{I}_h q^\dag \in \Uad_h$, we deduce
$J_{\gamma,h,\tau}(q_h^*)\leq J_{\gamma,h,\tau}(\mathcal{I}_hq^\dag).$
By the triangle inequality, we derive
\begin{align*}
\tau \sum_{n=1}^N \|U_h^n(q_h^*) -  u(t_n) \|_{L^2(\Omega)}^2
 &\le  c \tau \sum_{n=1}^N \|U_h^n(q_h^*) - z^\delta(t_n)\|_{L^2(\Omega)}^2
  + c \tau \sum_{n=0}^N a_n\| z^\delta(t_n)-u(t_n) \|_{L^2(\Omega)}^2 .
\end{align*}
These two inequalities and \cref{lem:quadrature} imply
\begin{align*}
   & \tau \sum_{n=1}^N \| U_h^n(q_h^*) - u(t_n) \|_{L^2(\Omega)}^2 + \gamma \| \nabla q_h^* \|_{L^2(\Omega)}^2 \\
  \leq & c\tau \sum_{n=1}^N \| U_h^n(\mathcal{I}_h q^\dag) - z^\delta(t_n)\|_{L^2(\Omega)}^2 + c\gamma\| \nabla \mathcal{I}_hq^\dag\|_{L^2(\Omega)}^2 + c(\delta^2+\tau^{1+\alpha}).
\end{align*}
Since $q^\dag\in W^{1,\infty}(\Omega)$ by Assumption
\ref{ass:data2}, $\|\nabla \mathcal{I}_hq^\dag\|_{L^2(\Omega)}\leq c$, cf. \eqref{eqn:int-err-inf}.
Further, by  \cref{lem:err-1}, we have
\begin{align*}
 \|U_h^n(\mathcal{I}_hq^\dag) - z^\delta(t_n)\|_{L^2(\Omega)}^2 &\le 2\|U_h^n(\mathcal{I}_hq^\dag) - u(t_n)\|_{L^2(\Omega)}^2+2\|u(t_n) - z^\delta(t_n)\|_{L^2(\Omega)}^2\\
& \le c(\tau t_n^{\alpha-1} + h^2\ell_h)^2 + c\|u(t_n) - z^\delta(t_n)\|_{L^2(\Omega)}^2,
\end{align*}
Consequently,
\begin{align*}
   \tau \sum_{n=1}^N \|  \nabla( U_h^n(\mathcal{I}_hq^\dag) - &z^\delta(t_n) ) \|_{L^2(\Omega)}^2 \le c \tau \sum_{n=1}^N ( t_n^{\alpha-1} \tau + h^2\ell_h)^2 + c\sum_{n=0}^Na_n\|u(t_n) - z^\delta(t_n)\|_{L^2(\Omega)}^2\\
   \leq & c\tau^3 \sum_{n=1}^N t_n^{\alpha-2} + c h^4\ell_h^2+ c(\tau^{1+\alpha} +\delta^2)
   \leq  c(\tau^{1+\alpha} + h^4\ell_h^2 + \delta^2 ).
\end{align*}
Combining the preceding estimates completes the proof of the lemma.
\end{proof}

We shall also need the following lemma on backward Euler CQ.
\begin{lemma}\label{lem:CQ}
Let $q^\dag$ be the exact coefficient, and $u\equiv u(q^\dag)$ the corresponding solution to problem \eqref{eqn:fde}.
Then for $\varphi^m=\frac{q^\dag-q_h^*}{q^\dag}u(t_m)$, and any $\epsilon\in(0,\min(\frac12,1-\alpha))$, there holds
\begin{align*}
  \| \tau^{-\alpha} \sum_{n=j}^m b_{n-j}^{(\alpha)} P_h(\fy^n - \fy^m) \|_{L^2(\Omega)} & \leq c_{T,\epsilon} t_j^{-\epsilon}.
\end{align*}
\end{lemma}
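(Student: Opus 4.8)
The plan is to strip off the time-independent spatial multiplier first, reducing the claim to a purely scalar-weighted convolution estimate for the single $L^2(\Omega)$-valued sequence $\{u(t_n)\}$, and then to control that sum by a summation-by-parts identity combined with the temporal regularity of $u$ in \eqref{sol-reg}.

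First I would observe that $w:=\frac{q^\dag-q_h^*}{q^\dag}$ is independent of time and, since $q^\dag,q_h^*\in\mathcal{A}$, satisfies $\|w\|_{L^\infty(\Omega)}\le (c_1-c_0)/c_0=:c_w$. Writing $\fy^n-\fy^m=w\,(u(t_n)-u(t_m))$, pulling the linear operator $P_h$ out of the finite sum, and using the $L^2(\Omega)$-stability of $P_h$ together with $\|w\,v\|_{L^2(\Omega)}\le c_w\|v\|_{L^2(\Omega)}$, the claim reduces to the $\tau$-uniform scalar-weighted bound
\[
  \Big\| \tau^{-\alpha}\sum_{n=j}^m b_{n-j}^{(\alpha)}\big(u(t_n)-u(t_m)\big)\Big\|_{L^2(\Omega)} \le c_{T,\epsilon}\,t_j^{-\epsilon}.
\]

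The main step is to estimate this sum. Recalling from \cref{lem:CQ-0} that the partial sums telescope, $\sum_{l=0}^k b_l^{(\alpha)}=b_k^{(\alpha-1)}$, I would apply summation by parts to the reindexed sum. Because the terminal value $\fy^m$ has been subtracted, the boundary contributions drop out and one is left with
\[
  \tau^{-\alpha}\sum_{n=j}^m b_{n-j}^{(\alpha)}\big(u(t_n)-u(t_m)\big)
  = \tau^{-\alpha}\sum_{k=0}^{m-j-1} b_k^{(\alpha-1)}\big(u(t_{j+k})-u(t_{j+k+1})\big).
\]
I would then write each increment as $u(t_{j+k})-u(t_{j+k+1})=-\int_{t_{j+k}}^{t_{j+k+1}} u'(s)\,\d s$, bound $\|u'(s)\|_{L^2(\Omega)}\le c\,s^{\alpha-1}$ via the regularity estimate \eqref{sol-reg} (the case $s=0$), and insert the weight bound $|b_k^{(\alpha-1)}|\le c(k+1)^{-\alpha}$ from \cref{lem:CQ-0}. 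A short computation shows the powers of $\tau$ cancel exactly, leaving the $\tau$-free double sum $c\sum_{k=0}^{m-j-1}(k+1)^{-\alpha}(j+k)^{\alpha-1}$.

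Finally I would estimate this sum by splitting at $k=j$: for $0\le k<j$ the factor $(j+k)^{\alpha-1}$ is comparable to $j^{\alpha-1}$ while $\sum_{k<j}(k+1)^{-\alpha}\le c\,j^{1-\alpha}$, so this range contributes $O(1)$; for $k\ge j$ the summand is dominated by $c\,k^{-1}$, whose partial sums contribute $O(\log(t_m/t_j))=O(\log(T/t_j))$. Absorbing both into $t_j^{-\epsilon}$ via the elementary inequalities $1\le T^{\epsilon}t_j^{-\epsilon}$ and $\log(T/t_j)\le c_\epsilon (T/t_j)^{\epsilon}$ (valid since $0<t_j\le T$) yields the claimed estimate, with the admissible range of $\epsilon$ governed by this balancing. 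I expect the genuine obstacle to be precisely this last bookkeeping: a naive triangle-inequality bound produces a factor growing like a positive power of $N=T/\tau$, and the whole point is to organize the double summation — through the summation-by-parts identity made possible by subtracting $\fy^m$, which annihilates the boundary terms — so that the $\tau$-powers cancel and only a harmless logarithmic-in-$t_j$ factor survives.
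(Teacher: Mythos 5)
Your proof is correct and takes essentially the same route as the paper: the identical summation by parts based on $\sum_{l=0}^{k}b_l^{(\alpha)}=b_k^{(\alpha-1)}$ (the paper phrases this via the associativity identity \eqref{eqn:cq-ass}), with boundary terms killed by the subtraction of $\fy^m$, followed by the regularity bound $\|u'(s)\|_{L^2(\Omega)}\le c s^{\alpha-1}$ from \eqref{sol-reg} and the weight bound $|b_k^{(\alpha-1)}|\le c(k+1)^{-\alpha}$. The only difference is the final bookkeeping — you split the resulting $\tau$-free sum at $k=j$ and absorb the logarithm via $\log(T/t_j)\le c_\epsilon (T/t_j)^\epsilon$, whereas the paper inserts $s^{\alpha-1}\le s^{\alpha+\epsilon-1}t_j^{-\epsilon}$ and bounds the comparison integral by a monotonicity argument — a cosmetic variant yielding the same $c_{T,\epsilon}t_j^{-\epsilon}$.
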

\begin{proof}
By the associativity of CQ from  \eqref{eqn:cq-ass}, i.e., $\bar\partial_\tau^\alpha\varphi^n
=\bar\partial_\tau^{\alpha-1}\bar\partial_\tau \varphi^n$, if $\varphi^0=0$,
\begin{align*}
   {\rm I}:=\tau^{-\alpha} \sum_{n=j}^m b_{n-j}^{(\alpha)} P_h(\fy^n - \fy^m) & = \tau^{1-\alpha} \sum_{n=j}^m b_{n-j}^{(\alpha-1)}\frac{P_h\fy^n-P_h\fy^{n+1}}{\tau}.
\end{align*}
Thus, the $L^2(\Omega)$-stability of $P_h$, the bound on $|b_j^{(\alpha-1)}|\leq c(j+1)^{-\alpha}$ and \eqref{sol-reg} imply
\begin{align*}
   \|{\rm I}\|_{L^2(\Omega)}
   \leq &\tau^{1-\alpha} \sum_{n=j}^m |b_{n-j}^{(\alpha-1)}| \|\frac{\fy^n - \fy^{n+1}}{\tau}\|_{L^2(\Omega)}
 \leq  c\tau^{1-\alpha} \sum_{n=j}^m (n-j+1)^{-\alpha}\|\fy'(\xi_n)\|_{L^2(\Omega)}\\
 \leq & c\tau^{1-\alpha}\sum_{n=j}^m (n-j+1)^{-\alpha}t_n^{\alpha-1}
 \leq c\int_{t_j}^{t_m} (s-t_j+\tau)^{-\alpha}s^{\alpha+\epsilon-1}\d s t_j^{-\epsilon} =: g(t_j) t_j^{-\epsilon}.
\end{align*}
where $\xi_n\in [t_n,t_{n+1}]$.
We claim that the integral $g(t_j)$ is decreasing in $t_j\in [\tau,t_m]$.
Indeed, for any $0<\bar t_1<\bar t_2\leq t_m$, by changing of variables, there holds
\begin{align*}
g(\bar t_1) &:= \int_{\bar t_1}^{t_m}  (s-\bar t_1+\tau)^{-\alpha} s^{\alpha+\epsilon-1} \,\d s \\
&= \int_{\bar t_1}^{t_m - (\bar t_2-\bar t_1)}  (s-\bar t_1+\tau)^{-\alpha} s^{\alpha+\epsilon-1} \,\d s + \int_{t_m - (\bar t_2-\bar t_1)}^{t_m}(s-\bar t_1+\tau)^{-\alpha} s^{\alpha+\epsilon-1} \,\d s \\
&\ge g(\bar t_2) + \int_{t_m - (\bar t_2-\bar t_1)}^{t_m}(s-\bar t_1+\tau)^{-\alpha} s^{\alpha+\epsilon-1} \,\d s \ge g(\bar t_2).
\end{align*}
Thus,
$\|{\rm I}\|_{L^2(\Omega)} \leq c t_j^{-\epsilon}\int_\tau^{t_m}(s+\tau)^{-\alpha}s^{\alpha+\epsilon-1} \d s\le
  c_\epsilon t_j^{-\epsilon}.$ This completes the proof of the lemma.
\end{proof}

The next theorem represents the main result of this section, i.e., error estimate of the numerical approximation
$q_h^*\in \Uad_h$ in a weighted $L^2(\Omega)$ norm, with the weight $q^\dag|\nabla u(t_n)  |^2 +( f(t_n) -\partial_t^\alpha u(t_n))u(t_n)$.
The proof relies crucially on the choice of the novel test function $\varphi=\frac{q^\dag-q_h^*}{q^\dag}u$.
\begin{theorem}\label{thm:error-q}
Let $q^\dag$ be the exact diffusion coefficient, $u\equiv u(q^\dag)$ the solution to problem \eqref{eqn:var},
and $q_h^*\in \Uad_h$ the solution to problem \eqref{eqn:ob-disc-1}--\eqref{eqn:fully-1}. Then
under \cref{ass:data2}, for $d=1,2$, with $\ell_h=|\log h|$ and $\eta= \tau^{\frac12+\frac\alpha2} + h^2\ell_h
+ \delta + \gamma^{\frac12}$, there holds
\begin{align*}
 &\tau^2 \sum_{m=1}^N \sum_{n=1}^m \int_\Omega \Big(\frac{q^\dag-q_h^*}{q^\dag}\Big)^2 \Big(q^\dag|\nabla u(t_n)  |^2 +( f(t_n) -\partial_t^\alpha u(t_n))u(t_n)\Big)\,\d x\\
 \le& c(h \gamma^{-1}\eta+ h\gamma^{-\frac12} +  h^{-1}\gamma^{-\frac12}\eta )\eta.
\end{align*}
\end{theorem}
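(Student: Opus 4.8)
The plan is to derive a weak identity for the error $q^\dag - q_h^*$ by testing the fully discrete state equation for $U_h^n(q_h^*)$ against the cleverly chosen function $\varphi^n = \frac{q^\dag-q_h^*}{q^\dag}u(t_n)$, and then to exploit the positivity structure of the fractional operators together with the \textit{a priori} bounds of Lemma~\ref{lem:err-2} to isolate the weighted norm on the left-hand side. The motivation for this test function is algebraic: the diffusion term $(q_h^*\nabla U_h^n(q_h^*),\nabla\varphi^n)$, when combined with the continuous equation satisfied by $u$, should produce a term of the form $((q^\dag-q_h^*)\nabla u(t_n),\nabla(\tfrac{q^\dag-q_h^*}{q^\dag}u(t_n)))$, whose leading part is $\int_\Omega \frac{(q^\dag-q_h^*)^2}{q^\dag}|\nabla u(t_n)|^2\,\d x$ — precisely (part of) the weight appearing in the statement.

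First I would write down the continuous equation $(\partial_t^\alpha u(t_n),v)+(q^\dag\nabla u(t_n),\nabla v)=(f(t_n),v)$ and the discrete equation $(\bar\partial_\tau^\alpha(U_h^n(q_h^*)-U_h^0),\chi)+(q_h^*\nabla U_h^n(q_h^*),\nabla\chi)=(P_hf(t_n),\chi)$, take the difference, and insert $\varphi^n$ (resp. its projection $P_h\varphi^n$) as the test function. Expanding $(q_h^*\nabla U_h^n(q_h^*),\nabla\varphi^n)$ as $((q_h^*-q^\dag)\nabla u(t_n),\nabla\varphi^n)+(q^\dag\nabla u(t_n),\nabla\varphi^n)+(q_h^*\nabla(U_h^n(q_h^*)-u(t_n)),\nabla\varphi^n)$, the first two terms generate the desired weight after integration by parts (using that $\nabla\cdot(q^\dag\nabla u)=\partial_t^\alpha u-f$, which yields the second piece $(f(t_n)-\partial_t^\alpha u(t_n))u(t_n)$ of the weight), while the fractional-derivative term, after summing in $n$ from $1$ to $m$ and then in $m$ from $1$ to $N$ with the factor $\tau^2$, is controlled through the positivity lemmas (Lemmas~\ref{lem:disc-pos}, \ref{lem:cq-prod}) and the CQ-consistency estimates (Lemmas~\ref{lem:deriv-approx}, \ref{lem:CQ}). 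The various remainders — the state error $U_h^n(q_h^*)-u(t_n)$, the CQ error $\bar\partial_\tau^\alpha-\partial_t^\alpha$, the projection error $P_h\varphi^n-\varphi^n$, and the gradient of $\frac{q^\dag-q_h^*}{q^\dag}$ — are each bounded by combinations of $\tau^{\frac12+\frac\alpha2}$, $h^2\ell_h$, $\delta$, and $\gamma^{\frac12}$, i.e. powers of $\eta$, using Lemma~\ref{lem:err-2} for $\|U_h^n(q_h^*)-u(t_n)\|$ and $\gamma^{1/2}\|\nabla q_h^*\|$.

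The term I expect to be the main obstacle is the handling of $\nabla\varphi^n$, since differentiating $\frac{q^\dag-q_h^*}{q^\dag}$ produces $\nabla q_h^*$, which is only controlled in $L^2(\Omega)$ by $\gamma^{-1/2}\eta$ (not pointwise), and this must be paired with $\nabla u(t_n)$ and with the state error. This is exactly where the dimensional restriction $d=1,2$ enters: to bound products such as $\int_\Omega (\nabla q_h^*)\,u(t_n)\,(\cdot)\,\d x$ one needs a Sobolev embedding $H^1(\Omega)\hookrightarrow L^p(\Omega)$ with $p$ large enough (or $W^{2,\infty}$-regularity of $q^\dag$ combined with $L^\infty$ control of $u$), together with the inverse estimate $\|\nabla\chi\|_{L^\infty}\le c h^{-1}\|\nabla\chi\|$ on the finite element space; these produce the factors $h$, $h^{-1}$, and $\gamma^{-1/2}$ in the final bound. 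Tracking these embedding constants and the negative powers of $h$ and $\gamma$ carefully, while ensuring the fractional-sum remainders telescope correctly under the double summation $\tau^2\sum_{m}\sum_{n\le m}$, is the delicate bookkeeping at the heart of the proof; the positivity of CQ is what prevents these remainder sums from accumulating uncontrollably in $n$.
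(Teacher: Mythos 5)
Your skeleton coincides with the paper's proof in its essentials: the same test function $\varphi^n=\frac{q^\dag-q_h^*}{q^\dag}u(t_n)$, the same splitting of $((q^\dag-q_h^*)\nabla u(t_n),\nabla\varphi^n)$ into a projection-error term (integrated by parts, with $\|\varphi^n-P_h\varphi^n\|_{L^2(\Omega)}\le ch(1+\|\nabla q_h^*\|_{L^2(\Omega)})$ and \cref{lem:err-2} giving the $h\gamma^{-1}\eta^2$ piece), a state-gradient-error term (inverse inequality plus \cref{lem:err-2}, producing $h\gamma^{-\frac12}+h^{-1}\gamma^{-\frac12}\eta$), and a residual term converted via the two equations into $(\bar\partial_\tau^\alpha[(U_h^n(q_h^*)-U_h^0)-(u(t_n)-u_0)],P_h\varphi^n)$ plus the CQ-consistency error handled by \cref{lem:deriv-approx}. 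You also correctly locate the $d=1,2$ restriction in the embedding $\|\nabla u\|_{L^\infty(\Omega)}\le c\|u\|_{H^s(\Omega)}$, $s>\frac d2+1$, needed for $\nabla\cdot((q^\dag-q_h^*)\nabla u(t_n))$, and the weight identity you sketch (testing $\nabla\cdot(q^\dag\nabla u)=\partial_t^\alpha u-f$ with $w^2u$, $w=\frac{q^\dag-q_h^*}{q^\dag}$) is exactly the paper's closing step.

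There is, however, a genuine gap at the term you dismiss to ``positivity'': the pairing $(\bar\partial_\tau^\alpha[(U_h^n(q_h^*)-U_h^0)-(u(t_n)-u_0)],P_h\varphi^n)$ cannot be controlled by \cref{lem:disc-pos} or \cref{lem:cq-prod}, because positivity of CQ is available only when the quadrature is tested against the \emph{same} sequence, whereas here the test function is $P_h\varphi^n$, not the error; moreover \cref{lem:err-2} bounds only the $\ell^2$-in-time norm of the error itself, so any direct estimate of its CQ derivative loses a factor $\tau^{-\alpha}$ and destroys the rate. The paper's actual mechanism --- which it explicitly flags as the crux and which your proposal never states --- is a discrete summation by parts over the whole interval,
\begin{equation*}
\tau\sum_{n=0}^m\big(\bar\partial_\tau^\alpha e^n,P_h\varphi^n\big)=\tau\sum_{j=0}^m\Big(e^j,\ \tau^{-\alpha}\sum_{n=j}^m b_{n-j}^{(\alpha)}P_h\varphi^n\Big),
\end{equation*}
followed by the splitting $\tau^{-\alpha}\sum_{n=j}^m b_{n-j}^{(\alpha)}P_h\varphi^n=\tau^{-\alpha}\sum_{n=j}^m b_{n-j}^{(\alpha)}P_h(\varphi^n-\varphi^m)+\big(\tau^{-\alpha}\sum_{n=0}^{m-j}b_n^{(\alpha)}\big)P_h\varphi^m$: the first piece is bounded by $ct_j^{-\epsilon}$ via \cref{lem:CQ} (using the associativity \eqref{eqn:cq-ass} and $\|\varphi'(t)\|_{L^2(\Omega)}\le ct^{\alpha-1}$), the second by $ct_{m-j+1}^{-\alpha}$ via \cref{lem:CQ-0}, and only then do Cauchy--Schwarz and \cref{lem:err-2} yield the $c\eta$ bound, with the outer sum $\tau\sum_m$ absorbing the singular weights. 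Without this Abel-summation step your argument does not close; it is also precisely this step that ties the analysis to full data on $[0,T]$, as the paper remarks.
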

\begin{proof}
For any test function $\fy$ to be specified below, we have the splitting
\begin{align*}
 ((q^\dag-q_h^*)\nabla u(t_n),\nabla\fy)& =((q^\dag-q_h^*)\nabla u(t_n),\nabla(\fy-P_h\fy))+ (q^\dag\nabla u(t_n)-q_h^*\nabla u(t_n),\nabla P_h\fy).
\end{align*}
Thus, applying integration by parts to the first term leads to
 \begin{align}\label{eqn:sp-01}
 ((q^\dag-q_h^*)\nabla u(t_n),\nabla\fy)&=-(\nabla\cdot((q^\dag-q_h^*)\nabla u(t_n)), \fy-P_h\fy ) + (q_h^*\nabla (U_h^n(q_h^*) - u(t_n)),\nabla P_h\fy) \nonumber\\
   & \quad + (q^\dag\nabla u(t_n) - q_h^*\nabla U_h^n(q_h^*),\nabla P_h\fy) =\sum_{i=1}^3{\rm I}_i^n.
\end{align}
Next we bound the three terms. Direct computation with the triangle inequality gives
\begin{align*}
 \| \nabla\cdot((q^\dag-q_h^*)\nabla u(t_n))\|_{L^2(\Omega)} \le & \| \nabla q^\dag\|_{L^\infty(\Omega)}  \| \nabla u(t_n) \|_{L^2(\Omega)}
 +\| q^\dag-q_h^*\|_{L^\infty(\Omega)}\| \Delta u(t_n) \|_{L^2(\Omega)}\\
   & +  \| \nabla q_h^*\|_{L^2(\Omega)}  \| \nabla u(t_n) \|_{L^\infty(\Omega)}.
\end{align*}
In view of the regularity estimate \eqref{sol-reg}, we derive
\begin{equation*}
\begin{split}
 \| \nabla\cdot (q^\dag-q_h^*)\nabla u(t_n) \|_{L^2(\Omega)} &\le c +  \| \nabla q_h^*\|_{L^2(\Omega)}  \| \nabla u(t_n) \|_{L^\infty(\Omega)}\\
 &\le c (1+t_n^{\min(0,1-\frac{d}{2}-\epsilon)\frac{\alpha}{2} }\| \nabla q_h \|_{L^2(\Omega)} ),
\end{split}
\end{equation*}
where the second line is due to Sobolev embedding $\| \nabla u \|_{L^\infty(\Omega)} \le c \|  u \|_{H^s(\Omega)}$ with
$s>\frac{d}2+1$ (by the convexity of the domain and elliptic regularity \cite[Corollary 19.7, p. 166]{Dauge:1988}).
This and the Cauchy-Schwarz inequality imply that the first term ${\rm I}_1^n$ is bounded by
\begin{equation*}
 |{\rm I}_1^n| \le  c(1+\| \nabla q_h \|_{L^2(\Omega)} ) \|  \fy-P_h\fy \|_{L^2(\Omega)}.
\end{equation*}
Now we choose the test function $\fy$ to be $\fy\equiv \fy^n=\frac{q^\dag-q_h^*}{q^\dag} u(t_n) \in H_0^1(\Omega)$,
and then straightforward computation gives
$
\nabla \fy^n = \big(q^{\dag-1} \nabla(q^\dag-q_h^*) - q^{\dag-2}(q^\dag-q_h^*)\nabla q^\dag\big) u(t_n) + q^{\dag-1}(q^\dag-q_h^*)\nabla u(t_n).
$
By the box constraint of $\Uad$ and the regularity estimate \eqref{sol-reg}, we have
\begin{equation*}
  \|\nabla\fy^n\|_{L^2(\Omega)}\le c\Big[(1+\|\nabla q_h^*\|_{L^2(\Omega)})\|u(t_n)\|_{L^\infty(\Omega)}+ \| \nabla u(t_n)\|_{L^2(\Omega)}\Big]\le c(1+\|\nabla q_h^*\|_{L^2(\Omega)}),
\end{equation*}
and the approximation property of the projection operator $P_h$ implies
$
  \|\fy^n -P_h\fy^n\|_{L^2(\Omega)} \le ch\| \nabla \fy^n\|_{L^2(\Omega)} \le ch(1+\|\nabla q_h^*\|_{L^2(\Omega)}).
$
Thus, by \cref{lem:err-2}, the term ${\rm I}_1^n$ is bounded by
\begin{align*}
  |{\rm I}_1^n| & \le  cht_n^{\min(0,1-\frac{d}{2}-\epsilon)\frac{\alpha}{2}}(1+\| \nabla q_h ^*\|_{L^2(\Omega)} )^2\\
  &\le  c t_n^{\min(0,1-\frac{d}{2}-\epsilon)\frac{\alpha}{2}}  h(1+\gamma^{-1}\eta^2) \le  c t_n^{\min(0,1-\frac{d}{2}-\epsilon)\frac{\alpha}{2}}  h \gamma^{-1}\eta^2,
\end{align*}
which together with the trivial inequality
$\tau \sum_{n=1}^N t_n^{\min(0,1-\frac d2-\epsilon)\frac{\alpha}{2}} \leq c$
implies
\begin{equation}\label{eqn:I1}
 \tau \sum_{n=1}^N  {\rm I}_1^n  \le   ch \gamma^{-1}\eta^2  .
\end{equation}
For the term ${\rm I}_2^n$, by the triangle inequality, inverse inequality, $ H^1(\Omega)$ stability of $P_h$,
 we have
\begin{align*}
\|  \nabla(u(t_n) - U_h^n(q_h^*)) \|_{L^2(\Omega)} & \leq \|  \nabla(u(t_n) - P_hu(t_n) ) \|_{L^2(\Omega)} + h^{-1}\|  P_h u(t_n)  - U_h^n(q_h^*)  \|_{L^2(\Omega)}\\
  & \leq c(h + h^{-1}\| u(t_n) - U_h^n(q_h^*)   \|_{L^2(\Omega)},
\end{align*}
and consequently, the Cauchy-Schwarz inequality and \cref{lem:err-2} imply
\begin{align}\label{eqn:I2}
  \tau \sum_{n=1}^N  {\rm I}_2^n & \le  \tau \sum_{n=1}^N \|  \nabla(u(t_n) - U_h^n(q_h^*)) \|_{L^2(\Omega)} \|  \nabla \fy^n \|_{L^2(\Omega)} \nonumber\\
  &\le c \Big(h + h^{-1}\Big(\tau \sum_{n=1}^N \|  u(t_n) - U_h^n(q_h^*)  \|_{L^2(\Omega)}^2\Big)^{\frac12}\Big) (1+\|\nabla q_h^* \|_{L^2(\Omega)})\nonumber\\
  &\le c (h \gamma^{-\frac12} + h^{-1} \gamma^{-\frac12} \eta ) \eta.
\end{align}
Next we bound the third term ${\rm I}_3^n$. It follows directly from \eqref{eqn:var} and \eqref{eqn:fully} that
\begin{align*}
 {\rm I}_3^n &=   (q^\dag\nabla u(t_n) - q_h^*\nabla U_h^n(q_h^*),\nabla P_h\fy^n ) \\
   &= (\bar \partial_\tau^\alpha (U_h^n(q_h^*)-U_h^0) - \partial_t^\alpha (u(t_n)-u_0) , P_h\fy^n )\\
 &=(\bar \partial_\tau^\alpha [(U_h^n(q_h^*) - U_h^0)-  (u(t_n)-u_0)],  P_h\fy^n )  \\
   & \quad + ( \bar \partial_\tau^\alpha (u(t_n) - u_0) - \partial_t^\alpha (u(t_n) - u_0) , P_h\fy^n )=:  {\rm I}_{3,1}^n + {\rm I}_{3,2}^n.
\end{align*}
It remains to bound the two terms ${\rm I}_{3,1}^n$ and ${\rm I}_{3,2}^n$ separately.
By  \cref{lem:deriv-approx}, there holds
\begin{equation*}
|{\rm  I}_{3,2}^n| \le  \| \bar \partial_\tau^\alpha (u(t_n) - u_0) - \partial_t^\alpha (u(t_n) - u_0) \|_{L^2(\Omega)}  \| P_h\fy^n \|_{L^2(\Omega)}
           \le c\tau t_n^{-1}, \quad n=1,2,\ldots, N.
\end{equation*}
Consequently,
\begin{equation*}
 |\tau^2\sum_{m=1}^N \sum_{n=1}^m {\rm I}_{3,2}^n| \le  c \tau^3  \sum_{m=1}^N \sum_{n=1}^m t_n^{-1} \le c \tau \log(1+t_N/\tau).
\end{equation*}
It remains to bound the term ${\rm I}_{3,1}^n$. Since $U_h^0(q_h^*)=U_h^0$ and $u(0)=u_0$, straightforward computation with
summation by parts yields
\begin{align*}
\tau \sum_{n=1}^m {\rm I}_{3,1}^n=& \tau \sum_{n=0}^m  (\bar \partial_\tau^\alpha [(U_h^n(q_h^*) - U_h^0)-  (u(t_n)-u_0)],  P_h\fy^n)\\
= & \tau \sum_{j=0}^m  ([ (U_h^j(q_h^*) - U_h^0)-  (u(t_j)-u_0)],  \tau^{-\alpha} \sum_{n=j}^m b_{n-j}^{(\alpha)} P_h\fy^n ).
\end{align*}
Next we appeal to the splitting
\begin{equation*}
  \tau^{-\alpha} \sum_{n=j}^m b_{n-j}^{(\alpha)} P_h\fy^n = \tau^{-\alpha} \sum_{n=j}^m b_{n-j}^{(\alpha)} P_h(\fy^n-\fy^m) + \tau^{-\alpha} \sum_{n=j}^m b_{n-j}^{(\alpha)} P_h\fy^m
: = {\rm IV}_{j,m}^1 +  {\rm IV}_{j,m}^2.
\end{equation*}
By \cref{lem:CQ-0}, the sum ${\rm IV}_{j,m}^2$ satisfies
\begin{align*}
 \| {\rm IV}_{j,m}^2\|_{L^2(\Omega)} & \le c \| \fy^m \|_{L^2(\Omega)} \Big(\tau^{-\alpha} \sum_{n=0}^{m-j} b_{n}^{(\alpha)}\Big)
   \le c t_{m-j+1}^{-\alpha} \| \fy^m \|_{L^2(\Omega)}\le c t_{m-j+1}^{-\alpha} ,
\end{align*}
since $\|\fy^m\|_{L^2(\Omega)}\leq c$.
Then  \cref{lem:err-2} and Cauchy-Schwarz inequality imply
\begin{align*}
\tau^2 \sum_{m=1}^N \sum_{j=1}^m \|U_h^j(q_h^*)  -  u(t_j)  \|_{L^2(\Omega)}  \|{\rm IV}_{j,m}^2\|_{L^2(\Omega)}
 \le& c \tau^2 \sum_{j=1}^N \sum_{m=j}^N \|U_h^j(q_h^*)  -  u(t_j)  \|_{L^2(\Omega)}  t_{m-j+1}^{-\alpha}\\
 \le&  c \Big(\tau  \sum_{j=1}^N \|U_h^j(q_h^*)  -  u(t_j)  \|_{L^2(\Omega)}^2\Big)^{\frac12}
 \le c\eta,
\end{align*}
where the second inequality is due to $\tau \sum_{m=j}^N t_{m-j+1}^{-\alpha} \leq ct_{N-j+1}^{1-\alpha}$.
Similarly, by \cref{lem:CQ},
\begin{align*}
&\tau^2 \sum_{m=1}^N \sum_{j=1}^m \|U_h^j(q_h^*)-u(t_j)\|_{L^2(\Omega)}\|{\rm IV}_{j,m}^1\|_{L^2(\Omega)}
\le c\tau^2 \sum_{m=1}^N \sum_{j=1}^m \|U_h^j(q_h^*)-u(t_j)\|_{L^2(\Omega)} t_j^{-\epsilon} \\
\le&   c\tau \sum_{j=1}^N \| u_h^j(q_h)  -  u(t_j;q)  \|_{L^2(\Omega)} t_j^{-\epsilon}
\le  c  \Big(\tau  \sum_{j=1}^N \|U_h^j(q_h^*)-u(t_j)\|_{L^2(\Omega)}^2\Big)^{\frac12} \le c \eta.
\end{align*}
These two estimates and the triangle inequality lead to
\begin{equation}\label{eqn:I3}
\Big|\tau^2 \sum_{m=1}^N \sum_{n=1}^m(\bar\partial_\tau^\alpha [(U_h^n(q_h^*)-u_h^0)-(u(t_n)-u_0)],P_h\fy^n )\Big|\le c\eta .
\end{equation}
The three estimates \eqref{eqn:I1}, \eqref{eqn:I2} and \eqref{eqn:I3} together imply
\begin{align*}
 \tau^2\sum_{m=1}^N \sum_{n=1}^m((q^\dag-q_h^*)\nabla u(t_n),\nabla\fy^n)
  \le  c(h \gamma^{-1}\eta + \gamma^{-\frac12}\eta+ h^{-1}\gamma^{-\frac12}\eta)\eta.
\end{align*}
Finally, this and the identity
\begin{equation*}
((q^\dag-q_h^*)\nabla u(t_n),\nabla\fy^n) = \frac{1}{2}\int_\Omega \Big(\frac{q^\dag-q_h^*}{q^\dag}\Big)^2\Big( q^\dag|\nabla u(t_n)  |^2 +(f(t_n)-\partial_t^\alpha u(t_n))u(t_n)\Big)\,\d x
\end{equation*}
lead immediately to the desired assertion. This completes the proof of the theorem.
\end{proof}

\begin{remark}
The restriction on $d=1,2$ is due to limited regularity pickup on general convex polyhedral domains, in order to
ensure $\| \nabla u \|_{L^\infty(\Omega)} \le c \|  u \|_{H^s(\Omega)}\leq c$. The result holds also for a
polyhedral domain in $\mathbb{R}^3$ with suitable conditions \cite[Theorem 4, p. 18]{Dauge:2008}.
{One possible strategy to remove the restriction is to use maximal $L^p(\Omega)$ regularity
\cite{JinLiZhou:2018nm}, instead of
the Hilbert space $H^s(\Omega)$.} Further, it is worth noting that the proof relies heavily on the
discrete ``integration by parts'' formula for convolution quadrature when bounding the term ${\rm I}_{3,1}$, which is
valid only for the whole interval $[0,T]$ and represents the main obstacle in extending the analysis
to the case of partial data, e.g., terminal observation.
\end{remark}

The next result is an immediate corollary of  \cref{thm:error-q}.
\begin{corollary}\label{cor:err-q}
Let $q^\dag$ be the exact diffusion coefficient, $u\equiv u(q^\dag)$ the solution to problem \eqref{eqn:var},
and $q_h^*\in \mathcal{A}_h$ the solution to problem \eqref{eqn:ob-disc-1}--\eqref{eqn:fully-1}. Then
under \cref{ass:data2}, for $d=1,2$, there holds {\rm(}with
$\eta= \tau^{\frac12+\frac{\alpha}{2}} + h^2\ell_h + \delta + \gamma^{\frac12}${\rm)}
\begin{align*}
  &\int_0^T\int_0^t \int_\Omega \Big(\frac{q^\dag-q_h^*}{q^\dag}\Big)^2 \Big(q^\dag|\nabla u(s)  |^2 +
  (f(s) -\partial_s^\alpha u(s))u(s)\Big)\,\d x\d s \d t\\
  \le & c(h \gamma^{-1}\eta+ h\gamma^{-\frac12} +  h^{-1}\gamma^{-\frac12}\eta)\eta.
\end{align*}
\end{corollary}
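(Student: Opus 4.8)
The plan is to deduce the corollary from Theorem~\ref{thm:error-q} by a Riemann-sum comparison between the continuous triple integral and the discrete double sum. Write $G(s)=\int_\Omega\big(\tfrac{q^\dag-q_h^*}{q^\dag}\big)^2\big(q^\dag|\nabla u(s)|^2+(f(s)-\partial_s^\alpha u(s))u(s)\big)\,\d x$, so that the left-hand side of the corollary is $\int_0^T\!\int_0^t G(s)\,\d s\,\d t$, which by Fubini's theorem equals $\int_0^T(T-s)G(s)\,\d s$. Reordering the double sum in Theorem~\ref{thm:error-q} likewise gives $\tau^2\sum_{m=1}^N\sum_{n=1}^m G(t_n)=\tau\sum_{n=1}^N(T-t_{n-1})G(t_n)$, a right-endpoint quadrature of the same weighted integral. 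Since Theorem~\ref{thm:error-q} already bounds the latter by the desired right-hand side, it suffices to show that the quadrature error $\mathcal{E}:=\big|\int_0^T(T-s)G(s)\,\d s-\tau\sum_{n=1}^N(T-t_{n-1})G(t_n)\big|$ is itself dominated by that right-hand side.

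First I would record two elementary facts. The weight $w=(q^\dag-q_h^*)/q^\dag$ is uniformly bounded by the box constraint defining $\Uad_h$, and the regularity estimate \eqref{sol-reg} yields $\|\nabla u(s)\|_{L^2(\Omega)},\,\|u(s)\|_{L^2(\Omega)}\le c$ together with $\|\partial_s^\alpha u(s)\|_{L^2(\Omega)}=\|f(s)+\nabla\cdot(q^\dag\nabla u(s))\|_{L^2(\Omega)}\le c$; hence $\sup_{s\in[0,T]}|G(s)|\le c$. Splitting $\mathcal{E}$ over the subintervals $[t_{n-1},t_n]$ and writing $(T-s)G(s)-(T-t_{n-1})G(t_n)=(T-s)(G(s)-G(t_n))-(s-t_{n-1})G(t_n)$, the boundedness of $G$ controls the second piece by $c\tau^2$ per interval, hence by $cT\tau$ after summation. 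The decisive part is therefore $\sum_n\int_{t_{n-1}}^{t_n}|G(s)-G(t_n)|\,\d s$, for which a temporal modulus of continuity of $G$ is needed.

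The main obstacle is that $G'$ is not integrable at $s=0$ if estimated naively: differentiating $G$ produces a term $\int_\Omega w^2 u\,\nabla\cdot(q^\dag\nabla u')\,\d x$ involving $\|u'(s)\|_{\dot H^2(\Omega)}$, which \eqref{sol-reg} does not control (it would require $\beta>2$) and which in general carries the non-integrable singularity $s^{\alpha-2}$. I would circumvent this exactly as in Lemma~\ref{lem:quadrature}: on the first subinterval $[0,t_1]$ I simply bound $|G(s)-G(t_1)|\le 2\sup|G|\le c$, contributing only $c\tau$; on each $[t_{n-1},t_n]$ with $n\ge2$ I integrate the offending term by parts, moving one derivative onto $u'$, so that it is controlled by $\|u'(s)\|_{\dot H^1(\Omega)}\le c\,s^{-1+\alpha/2}$ (an integrable singularity) times $\|\nabla(w^2)\|_{L^2(\Omega)}\le c(1+\|\nabla q_h^*\|_{L^2(\Omega)})\le c(1+\gamma^{-1/2}\eta)$ by Lemma~\ref{lem:err-2}; here I use $d\le2$ so that $u\in L^\infty(\Omega)$. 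All remaining terms of $G'$ carry at worst the integrable singularities $s^{-1+\alpha/2}$ and $s^{-1+\alpha}$. Thus $\int_{t_{n-1}}^{t_n}|G(s)-G(t_n)|\,\d s\le c\tau\int_{t_{n-1}}^{t_n}(1+\gamma^{-1/2}\eta)\,r^{-1+\alpha/2}\,\d r$ for $n\ge2$, and summation over $n$ gives $\mathcal{E}\le c\tau(1+\gamma^{-1/2}\eta)$.

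It remains to absorb $\mathcal{E}$ into the right-hand side of Theorem~\ref{thm:error-q}. Since $\alpha<1$ gives $\eta\ge\tau^{\frac12+\frac\alpha2}\ge\tau$, and $\eta\ge\gamma^{1/2}$ gives $\gamma^{-1/2}\eta\ge1$, I obtain $\mathcal{E}\le c\tau\,\gamma^{-1/2}\eta\le c\,\gamma^{-1/2}\eta^2$. Because $h<1$ yields $h^{-1}\ge1$, the right-hand side already contains the term $c\,h^{-1}\gamma^{-1/2}\eta\cdot\eta\ge c\,\gamma^{-1/2}\eta^2\ge\mathcal{E}$, so the quadrature error is dominated. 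Combining this with the bound of Theorem~\ref{thm:error-q} then yields the claimed estimate for $\int_0^T\!\int_0^t G(s)\,\d s\,\d t$, completing the proof.
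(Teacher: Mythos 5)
Your proof is correct, and its skeleton coincides with the paper's: reduce to \cref{thm:error-q}, bound the mismatch between the continuous double time integral and the discrete double sum, treat the first subinterval by the uniform bound on the integrand, and handle the remaining ones through a temporal derivative estimate --- exactly the pattern of \cref{lem:quadrature}. Where you genuinely deviate is the critical term $(f-\partial_s^\alpha u)u=-\nabla\cdot(q^\dag\nabla u)\,u$. The paper factors the bounded, time-independent weight $\big(\tfrac{q^\dag-q_h^*}{q^\dag}\big)^2$ out, differentiates the two unweighted integrands directly, and for this term invokes $\|(\partial_t^\alpha u)'(t)\|_{L^2(\Omega)}\le ct^{-1}$ (in effect $\|u'(t)\|_{\dot H^2(\Omega)}\le ct^{-1}$), a borderline non-integrable singularity producing a quadrature error $c\tau|\log\tau|$, which is then absorbed only implicitly, via $\tau|\log\tau|\le c\tau^{\frac{1+\alpha}{2}}\le c\eta$ together with $\gamma^{-1/2}\eta\ge1$ and $h^{-1}\ge1$. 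You instead integrate by parts in space, trading the $t^{-1}$ singularity for the integrable $\|\nabla u'(s)\|_{L^2(\Omega)}\le cs^{\frac{\alpha}{2}-1}$ at the cost of the factor $1+\|\nabla q_h^*\|_{L^2(\Omega)}\le c(1+\gamma^{-1/2}\eta)$ from \cref{lem:err-2}; this factor never appears in the paper's proof because the weight is never differentiated there. Your absorption $c\tau(1+\gamma^{-1/2}\eta)\le c\gamma^{-1/2}\eta^2\le ch^{-1}\gamma^{-1/2}\eta\cdot\eta$ is valid since $\tau\le\tau^{\frac{1+\alpha}{2}}\le\eta$, $\gamma^{1/2}\le\eta$ and $h<1$. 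The net comparison: your route needs only $\dot H^1(\Omega)$-regularity of $u'$ --- arguably safer, since \eqref{sol-reg} as stated controls $\|u'(t)\|_{\dot H^s(\Omega)}$ only for restricted $s$, whereas the paper's $t^{-1}$ bound tacitly uses $s=2$ --- avoids the logarithm, and makes the absorption step explicit; the paper's version is shorter and keeps the quadrature estimate independent of $q_h^*$. Your Fubini reordering $\tau^2\sum_{m=1}^N\sum_{n=1}^m G(t_n)=\tau\sum_{n=1}^N(T-t_{n-1})G(t_n)$ is a correct identity and merely repackages the paper's double-sum manipulation into a one-dimensional weighted right-endpoint rule.
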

\begin{proof}
In view of \cref{thm:error-q}, it suffices to bound the quadrature error:
\begin{align*}
  & \left|\int_0^T\int_0^t |\nabla u(s)|^2\d s\d t - \tau^2 \sum_{m=1}^N \sum_{n=1}^m|\nabla u(t_n)  |^2\right| \\
    & + \left|\int_0^T\int_0^t(f(s)-\partial_s^\alpha u(s))u(s)\d s \d t-
  \tau^2 \sum_{m=1}^N \sum_{n=1}^m  ( f(t_n) -\partial_t^\alpha u(t_n))u(t_n)\right|
  := {\rm I} + {\rm II}.
\end{align*}
It remains to bound the two terms ${\rm I}$ and ${\rm II}$.  For the first term,
\begin{align*}
  {\rm I} & \leq \left|\sum_{m=1}^N\big(\int_{t_{m-1}}^{t_m} \int_0^{t_m} |\nabla u(s)|^2\d s\d t - \tau^2 \sum_{n=1}^m  |\nabla u(t_n)  |^2\big)\right|\\
          & + \left|\sum_{m=1}^N\int_{t_{m-1}}^{t_m} \int_{\max(t,t_{m-1})}^{t_m} |\nabla u(s)|^2\d s\d t \right| \\
          & \leq \tau \sum_{m=1}^N \underbrace{\Big|\int_0^{t_m}|\nabla u(s)|^2\d s -\tau\sum_{n=1}^m  |\nabla u(t_n)  |^2\Big|}_{{\rm I}_m} + \tau\sum_{m=1}^N \int_{t_{m-1}}^{t_m} |\nabla u(s)|^2\d s.
\end{align*}
By the regularity estimate \eqref{sol-reg}, $\|\nabla u'(s)\|_{L^2(\Omega)}\leq cs^{\frac{\alpha}{2}-1}$ and $\|\nabla u(t)\|_{C([0,T];L^2(\Omega))}\leq c$. Clearly $\tau\sum_{m=1}^N \int_{t_{m-1}}^{t_m} |\nabla u(s)|^2\d s \leq c\tau$. Further,
\begin{align*}
  \int_\Omega {\rm I}_m\d x  & \leq \sum_{n=1}^m\int_{t_{m-1}}^{t_m}\|\nabla (u(s)+u(t_n))\|_{L^2(\Omega)}\|\nabla (u(s)- u(t_n))\|_{L^2(\Omega)} \d s\\
    & \leq  c\|\nabla u\|_{C([0,t_m];L^2(\Omega))}\sum_{n=1}^m\int_{t_{m-1}}^{t_m}\|\nabla \int_s^{t_n}u'(\zeta)\d \zeta\|_{L^2(\Omega)} \d s \\
    & \leq c\|\nabla u\|_{C([0,t_m];L^2(\Omega))}\tau \int_0^{t_m}s^{\frac{\alpha}{2}-1}\d s \leq c\tau.
\end{align*}
The preceding two estimates imply $\int_\Omega {\rm I}\d x \leq c\tau$. The term ${\rm II}$
can be bounded similarly as $\int_\Omega{\rm II}\d x \leq c\tau|\ln \tau|$. Indeed, under  \cref{ass:data2}(i),
the estimate \eqref{sol-reg} and \eqref{eqn:fde}, we have $\|\partial_t^\alpha u\|_{L^2(\Omega)}
\leq c$ and $\|(\partial_t^\alpha u)'(t)\|_{L^2(\Omega)}\leq ct^{-1}$, and thus $g(t)\equiv
\partial_t^\alpha u(t)-f(t)$ satisfies $\|g(t)\|_{L^2(\Omega)}\leq c$ and $\|g'(t)\|_{L^2(\Omega)}\leq ct^{-1}$.
Then repeating the argument completes the proof.
\end{proof}

\begin{remark}{
There has been much interest in deriving error bounds on the Galerkin approximation $q^*_h$ in the usual $L^2(\Omega)$ or
Sobolev norm for nonlinear parameter identification problems. However, for the inverse conductivity problem in
either elliptic or parabolic case, such an estimate remains elusive, largely due to a lack of convexity of the regularized problem.
The error estimate given in Corollary \ref{cor:err-q} provides one possible route to derive an $L^2(\Omega)$ estimate. Indeed,
if the exact coefficient $q^\dag$ and the corresponding state $u\equiv u(q^\dag)$ satisfy
\begin{equation}\label{eqn:ass-uq}
 \int_0^T\int_0^t \Big(q^\dag|\nabla u(s)  |^2 +( f(s) -\partial_s^\alpha u(s))u(s)\Big)\d s\d t > {c} \qquad \text{a.e. }x \in{\Omega},
\end{equation}
the the usual $L^2$ estimate follows directly. In the classical parabolic case, similar structural conditions have
been assumed in the literature, e.g., the following characteristic condition \cite{TaiKarkkarinen:1995,Karkkainen:1997}:
$t^{-1}\int_0^t\nabla u(x,s)\d s\cdot \nu\geq \delta_0>0$ for all $(x,t)\in Q\equiv \Omega\times(0,T]$, where $\nu$ is
a constant vector, or \cite[Theorem 6.4]{WangZou:2010} $\alpha_0 |\int_0^t \nabla u (x,s)\d s|^2 + t\int_0^t (u'(x,s)
-f(x,s))\d s \geq 0$ a.e. $(x,t)\in Q$. Note that this latter condition is not positively homogeneous (with respect to
problem data). Next we comment on the condition \eqref{eqn:ass-uq}. If $f \equiv 0$ in $Q$, $u_0 > 0$ in $\Omega$, then
the maximum principle for the subdiffusion model \cite{LiuRundellYamamoto:2016} implies $u>0$ in $ Q$. Further,
$w=\partial_t^\alpha u$ satisfies $\Dal  w - \nabla\cdot(q^\dag\nabla w)  = \partial_t^\alpha f$ in $Q$, with initial
condition $w(0) = \nabla\cdot(q^\dag\nabla u_0)+ f(0)$ in $\Omega$ and boundary condition $w=0$ on $\partial\Omega\times(0,T]$
If $ \partial_t^\alpha f(t) \le 0$ and $\nabla\cdot(q^\dag\nabla u_0)+ f(0)\le 0$, then maximum principle implies
$\partial_t^\alpha u = w \le 0$ in $Q$. Further, if $f > 0$ in $ Q$, then $ f-\partial_t^\alpha u > 0$ in $ Q$, which
implies $ (f - \partial_t^\alpha u) u > 0$ in $Q$. Thus at least a weak version of condition \eqref{eqn:ass-uq} holds.
We leave further discussions on the condition \eqref{eqn:ass-uq} and its analogues to future work.}
\end{remark}

\begin{remark}\label{rem:rate}
\cref{thm:error-q} and Corollary \ref{cor:err-q} show that the convergence rate
is of order $O(\delta^\frac14)$ in the weighted norm, provided that $\gamma=O(h^4) =
O(\delta^2)=O(\tau^{1+\alpha})$. The error estimate in \cref{thm:error-q} and
Corollary \ref{cor:err-q} is expected to be sub-optimal, due to the presence of the factor
$h^{-1}$, which arises from the use of inverse inequality in \eqref{eqn:I2}. It remains unclear
how to achieve optimality, even in the standard parabolic case \cite{WangZou:2010}.
\end{remark}

\section{Numerical results and discussions} \label{sec:numer}

Now we present numerical results to illustrate the fully discrete scheme \eqref{eqn:ob-disc}--\eqref{eqn:fully} with one-
and two-dimensional examples, with the measurement $z^\delta$ over the time interval $[T_0,T]$ (by a straightforward
adaptation of the formulation; see Remark \ref{rmk:alternative}), with $T$ fixed at $1$. Throughout, the
corresponding discrete problem is solved by the conjugate gradient (CG) method \cite{Alifanov:1995}, with the gradient
computed using the standard adjoint technique. Unless otherwise stated, the lower and upper bounds in the admissible
set $\mathcal{A}$ are taken to be $c_0=0.5$ and $c_1=5$, respectively, and are enforced by a projection step after
each CG iteration. The minimization method converges generally within tens of iterations. The noisy data $z^\delta$ is generated by
\begin{equation*}
  z^\delta(x,t) = u(q^\dag)(x,t) + \epsilon\sup_{(x,t)\in\Omega\times [T_0,T]}|u(x,t)|\xi(x,t),\quad (x,t)\in \Omega\times[T_0,T],
\end{equation*}
where $\xi(x,t)$ follows the standard Gaussian distribution, and $\epsilon\geq 0$ denotes the (relative) noise level.
The noisy data $z^\delta$ is first generated on a fine spatial-temporal mesh and then interpolated to a coarse spatial/
temporal mesh for the inversion step.
The scalar $\gamma$ in the functional $J_\gamma$ plays an important role in determining the accuracy
of the reconstructions, but it is notoriously challenging to choose (see e.g., \cite{ItoJin:2015}). In our experiments, its value is determined
by a trial and error manner, first for the fractional order $\alpha=0.50$, and then used for the cases
$\alpha=0.25$ and $\alpha=0.75$, which might be suboptimal but works reasonably well in practice.

\subsection{Numerical results in one spatial dimension}
First we present numerical results for two examples on unit interval $\Omega=(0,1)$. The reference data $u(q^\dag)$ is computed with
a mesh size $h=1/400$ and time step size $\tau=1/2048$, and the inversion step is carried out with a mesh size $h=1/200$ and time
step size $\tau=1/1024$, unless otherwise specified.

The first example has a smooth exact coefficient $q^\dag$, and the problem is homogeneous.
\begin{example}\label{exam:1dsmooth}
$u_0=x(1-x)$, $f\equiv 0$, $q^\dag=2+\sin(2\pi x)$.
\end{example}

First, we let $T_0=0.75$ and study how the reconstruction error changes with respect to different parameters.
The numerical results for the example with different noise levels $\epsilon$, and fixed $h$ and $\tau$,
are summarized in Table \ref{tab:exam1}.
The chosen $\gamma$ is relatively small, since the magnitude of the
exact data $u(q^\dag)$ is actually very small: for example, upon convergence, the functional value $J_{\gamma,h,\tau}
(q_h^*)$ is about $O(10^{-12})$ for exact data and about $O(10^{-9})$ for $\epsilon=\text{1.00e-2}$.
Clearly, the $L^2(\Omega)$ error $e_q$ of the reconstruction $q_h^*$, i.e.,
$e_q=\|q^\dag-q_h^*\|_{L^2(\Omega)}$, decreases steadily as the noise level $\epsilon$ tends to zero (Note that
even at $\epsilon=0$, the reconstruction error $e_q$ is nonzero due to the presence of discretization errors). The convergence is
consistently observed for all three fractional orders. Interestingly, for a fixed noise level $\epsilon$, as
the fractional order $\alpha$ increases from $0.25$ to $0.75$, the reconstruction error tends to deteriorate slightly.
It might be related to the fact that for homogeneous subdiffusion, the smaller $\alpha$ is,
the quicker the state $u(t)$ approaches a ``quasi''-steady state; Then the inverse problem reduces
to the elliptic counterpart, i.e., $-\nabla\cdot(q\nabla u)=f$, which is known to be beneficial for numerical reconstruction \cite{JinRundell:2015}.
However, the precise mechanism remains to be ascertained. We refer to Fig. \ref{fig:recon-exam1} for
exemplary reconstructions: the recoveries are qualitatively comparable with each other and all reasonably
accurate for $\epsilon$ up to $\epsilon=\text{5.00e-2}$. These
observations concur well with the numbers in Table \ref{tab:exam1}.

\begin{table}[hbt!]
\centering
\caption{The reconstruction error $\|q_h^*-q^\dag\|_{L^2(\Omega)}$ for Example \ref{exam:1dsmooth}.\label{tab:exam1}}
\begin{tabular}{c|cccccc}
\hline
$\epsilon$ &  0    & 1.00e-3 &  5.00e-3  & 1.00e-2  & 3.00e-2  & 5.00e-2\\
$\gamma$ &  1.00e-14  & 1.00e-13&  3.00e-13 & 5.00e-13 & 1.00e-12 & 3.00e-12\\
\hline
$\alpha=0.25$ & 7.75e-3 & 9.95e-3 & 1.33e-2 & 1.53e-2 & 2.50e-2 & 3.64e-2\\
$\alpha=0.50$ & 8.73e-3 & 1.00e-2 & 1.33e-2 & 1.50e-2 & 2.65e-2 & 4.11e-2\\
$\alpha=0.75$ & 9.92e-3 & 1.16e-2 & 1.80e-2 & 2.24e-2 & 3.30e-2 & 5.16e-2\\
\hline
\end{tabular}
\end{table}

\begin{figure}[hbt!]
\setlength{\tabcolsep}{0pt}
   \centering
   \begin{tabular}{ccc}
   \includegraphics[width=0.33\textwidth]{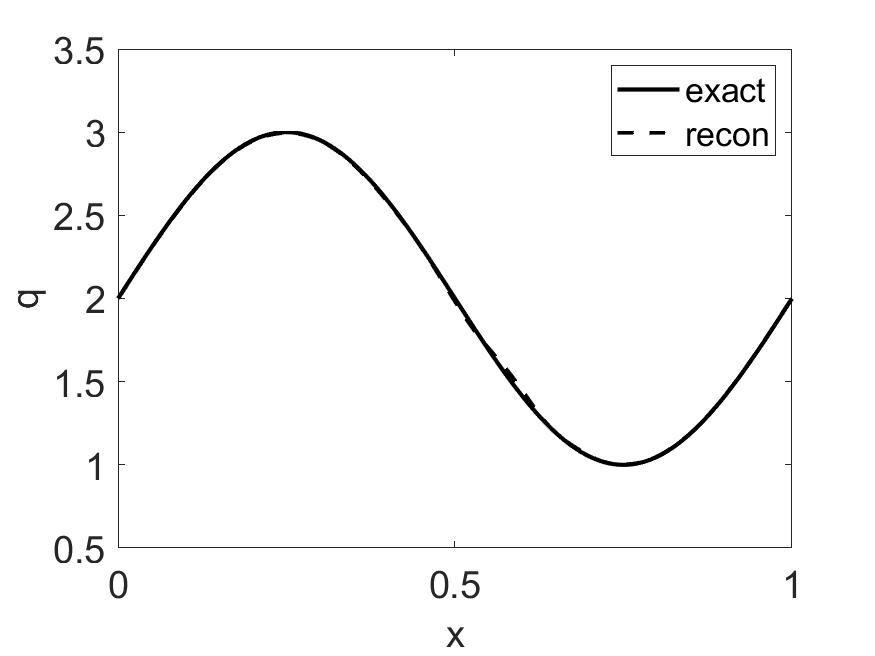}&\includegraphics[width=0.33\textwidth]{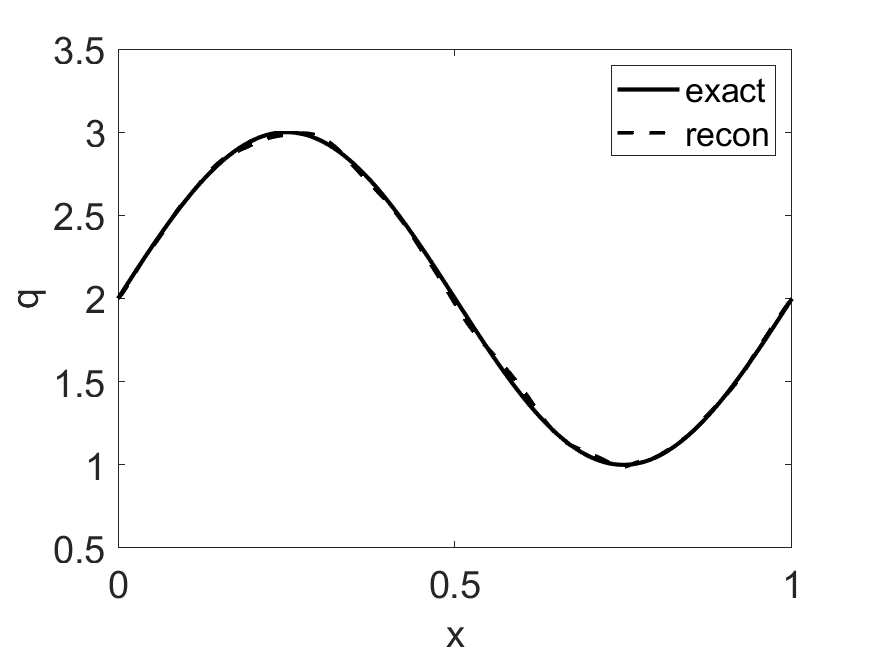} & \includegraphics[width=0.33\textwidth]{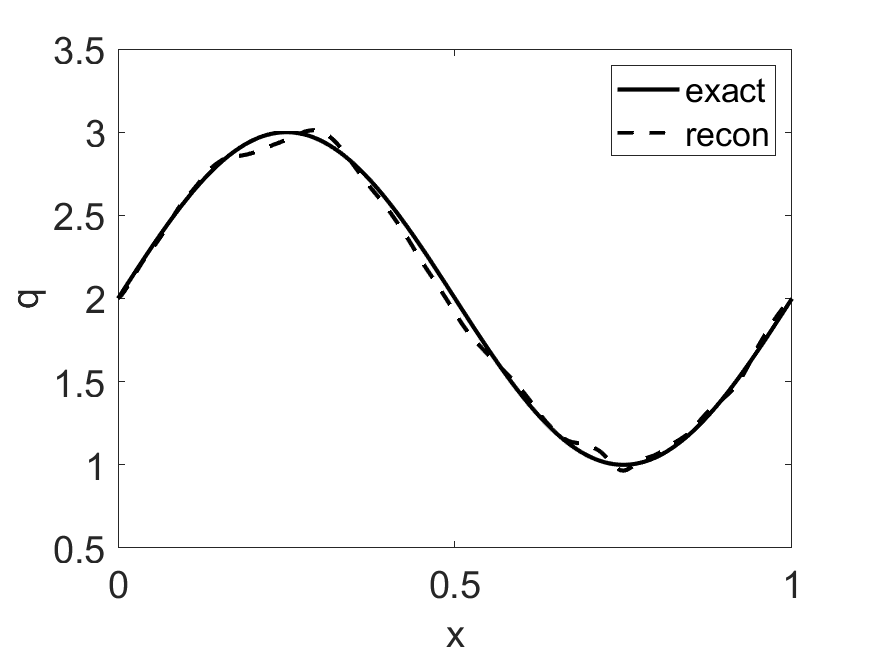}\\
      $\epsilon = 0$  & $\epsilon=\text{1.00e-2}$ & $\epsilon = \text{5.00e-2}$
   \end{tabular}
   \caption{Numerical reconstructions for Example \ref{exam:1dsmooth} with $\alpha=0.5$.
   \label{fig:recon-exam1}}
\end{figure}

Next we examine the convergence with respect to the mesh size $h$ and time step size $\tau$; see
Tables \ref{tab:exam1-conv-h} and \ref{tab:exam1-conv-tau} for the empirical convergence with respect to
$h$ and $\tau$, respectively. The reference regularized solution $q^*$ is computed with $h=1/800$ and $\tau=1/2048$,
and it differs slightly from the exact diffusion coefficient $q^\dag$, due to the presence of data noise ($\epsilon
=\text{1e-2}$). Clearly, the $L^2(\Omega)$ error $\|q^*-q_h^*\|_{L^2(\Omega)}$ of the reconstruction $q_h^*$
(which depends also implicitly on $\tau$ via the optimization problem \eqref{eqn:ob-disc}--\eqref{eqn:fully})
decreases as either the mesh size $h$ or time step size $\tau$ tends to zero, and the convergence is generally
steady. These observations partially confirm the convergence result in \cref{thm:conv-fully}.

\begin{table}[hbt!]
  \centering
  \caption{Reconstruction errors $\|q_h^*-q^*\|_{L^2(\Omega)}$ for Example \ref{exam:1dsmooth} with $\epsilon=\text{1.00e-2}$ (and
  $\beta=\text{5.00e-13}$), v.s. the mesh size $h=1/M$, with $\tau$ fixed at $\tau=2^{-10}$.\label{tab:exam1-conv-h}}
  \begin{tabular}{c|cccccc}
    \hline
    $M$ & 10 & 20 & 40 & 80 & 160 & 320\\
    \hline
 $\alpha=0.25$ & 5.39e-2 & 2.74e-2 & 2.33e-2 & 1.46e-2 & 2.04e-2 & 1.15e-2\\
 $\alpha=0.50$ & 5.38e-2 & 2.56e-2 & 2.51e-2 & 1.56e-2 & 1.16e-2 & 6.51e-3\\
 $\alpha=0.75$ & 4.61e-2 & 2.57e-2 & 2.26e-2 & 2.41e-2 & 1.14e-2 & 8.00e-3\\
    \hline
  \end{tabular}
\end{table}

\begin{table}[hbt!]
  \centering
  \caption{Reconstruction errors $\|q_h^*- q^*\|_{L^2(\Omega)}$ for Example \ref{exam:1dsmooth} with $\epsilon=\text{1.00e-2}$ (and
  $\beta=\text{5.00e-13}$), v.s. the time step size $\tau$, with $h$ fixed at $h=\text{5e-3}$.\label{tab:exam1-conv-tau}}
  \begin{tabular}{c|cccccc}
    \hline
    $\tau$ & $2^{-5}$ & $2^{-6}$ & $2^{-7}$ & $2^{-8}$ & $2^{-9}$ & $2^{-10}$\\
    \hline
 $\alpha=0.25$ & 3.78e-2 & 3.88e-2 & 2.03e-2 & 8.30e-3 & 2.38e-2 & 6.27e-3\\
 $\alpha=0.50$ & 3.90e-2 & 3.80e-2 & 1.98e-2 & 1.92e-2 & 2.07e-2 & 8.46e-3\\
 $\alpha=0.75$ & 9.31e-2 & 4.47e-2 & 2.64e-2 & 1.06e-2 & 1.45e-2 & 6.64e-3\\
    \hline
  \end{tabular}
\end{table}

Last, we take $T_0 = 0$ and examine the convergence of the errors
$ e_q = \| q^\dag - q_h^*  \|_{L^2\II}$ and $e_u= (\tau \sum_{n=1}^N\| u(t_n) - U_h^n(q_h^*) \|_{L^2\II}^2)^{\frac12},  $
with respect to $\epsilon$. Motivated by the error estimates in \cref{thm:error-q}
and Remark \ref{rem:rate}, we fix a small $\tau=1/2048$ and let $h=\sqrt{\epsilon}$ and
$\gamma=10^{-4}\times\epsilon^2$. The errors $e_q$ and $e_u$ are plotted in Fig.
\ref{fig:rate}: a first-order convergence $O(\epsilon)$ is clearly observed. This shows the sub-optimality
of the theoretical convergence rate in \cref{thm:error-q}. This remains an outstanding
question for the analysis of the discrete problem, and seems open even for the
standard parabolic case.

\begin{figure}[htb!]
\centering
\begin{tabular}{cc}
\includegraphics[width=0.40\textwidth]{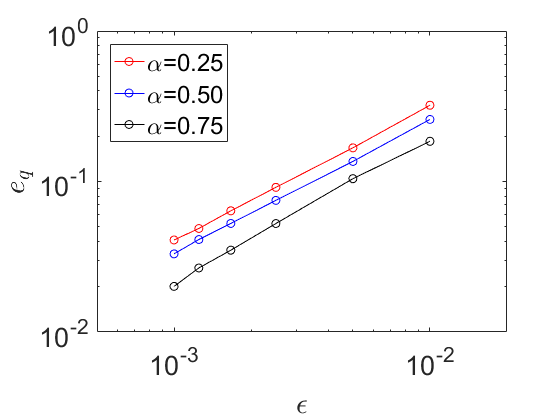}&\includegraphics[width=0.40\textwidth]{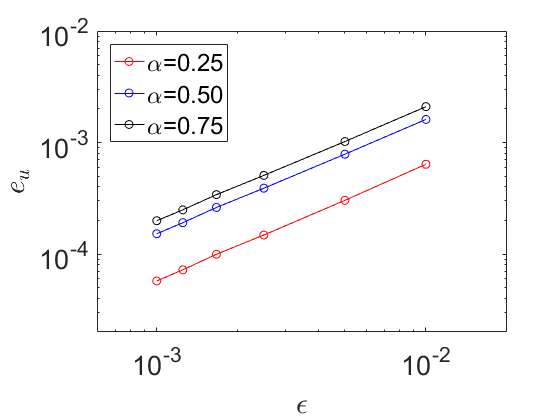}
\end{tabular}
\caption{Plot of $e_u$ and $e_q$ versus $\epsilon$, with $h=\sqrt{\epsilon}$, $\gamma=10^{-4}\times\epsilon^2$ and $\tau=1/2048$.\label{fig:rate}}
\end{figure}

The second example has a nonsmooth exact coefficient $q^\dag$, and the problem is inhomogeneous.
The notation $\min$ denotes the pointwise minimum.
\begin{example}\label{exam:1dnonsmooth}
$u_0(x)=x^2(1-x)^2$, $f(x,t)=e^{x(1-x)}x(1-x)t$, $q^\dag=2+\min(\frac12,\sin^4(2\pi x))$, and $T_0=0.75$.
\end{example}

The numerical results for the example with different noise levels are given in Table \ref{tab:exam2}
and Fig. \ref{fig:recon-exam2}, {where the lower and upper bounds in the admissible set $\mathcal{A}$
are taken to be $c_0=1.9$ and $c_1=2.7$. With this choice, the box
constraint becomes active at some CG iterations.} The observations from Example \ref{exam:1dsmooth} remain largely valid: the
error $e_q=\|q^\dag-q_h^*\|_{L^2(\Omega)}$ decreases as the noise level $\epsilon$ decreases to zero. The results are mostly
comparable for all three fractional orders. For high noise levels, e.g., $\epsilon=\text{5.00e-2}$, the reconstruction error
 is clearly dominated by the oscillations within the flat regions, which is reminiscent of the Gibbs
phenomenon arising from the approximation of the kinks, and also the deviations in the valley. Nonetheless, all the
results are fair and represent acceptable approximations.

\begin{table}[hbt!]
\centering
\caption{Reconstruction error $\|q_h^*-q^\dag\|_{L^2(\Omega)}$ for Example \ref{exam:1dnonsmooth}.\label{tab:exam2}}
\begin{tabular}{c|cccccc}
\hline
$\epsilon$ &  0    & 1.00e-3 &  5.00e-3  & 1.00e-2  & 3.00e-2  & 5.00e-2\\
$\gamma$ &  1.00e-15  & 2.00e-13&  4.00e-13 & 1.00e-12 & 4.00e-12 & 9.00e-12 \\
\hline
$\alpha=0.25$ & 4.36e-3 &  7.91e-3 &  1.28e-2 &  1.56e-2 &  2.21e-2 &  3.02e-2\\
$\alpha=0.50$ & 6.13e-3 &  6.95e-3 &  1.30e-2 &  1.58e-2 &  2.34e-2 &  2.89e-2\\
$\alpha=0.75$ & 1.04e-2 &  1.14e-2 &  1.44e-2 &  1.54e-2 &  2.18e-2 &  3.23e-2\\
\hline
\end{tabular}
\end{table}

\begin{figure}[hbt!]
\setlength{\tabcolsep}{0pt}
   \centering
   \begin{tabular}{ccc}
   \includegraphics[width=0.33\textwidth]{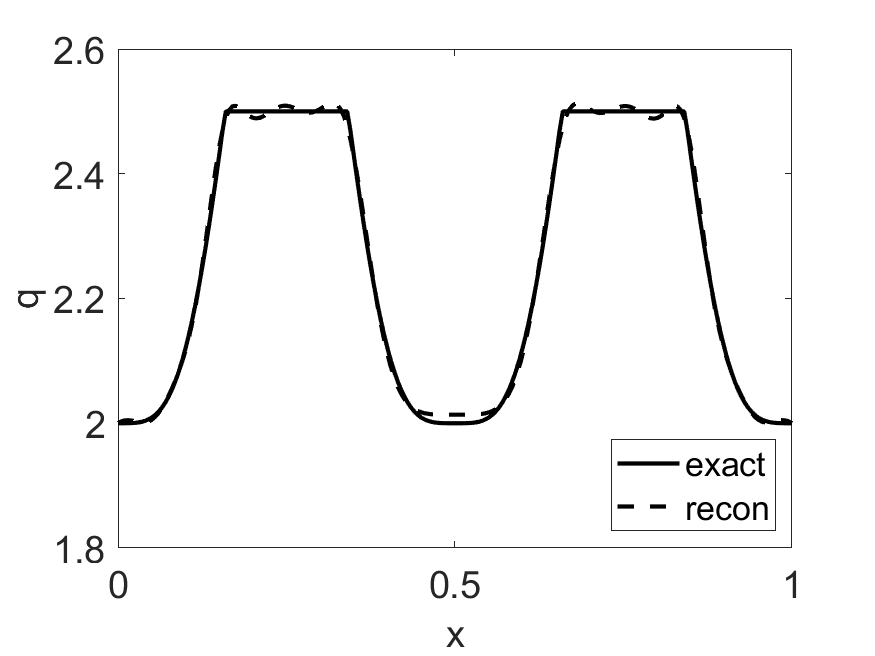}&\includegraphics[width=0.33\textwidth]{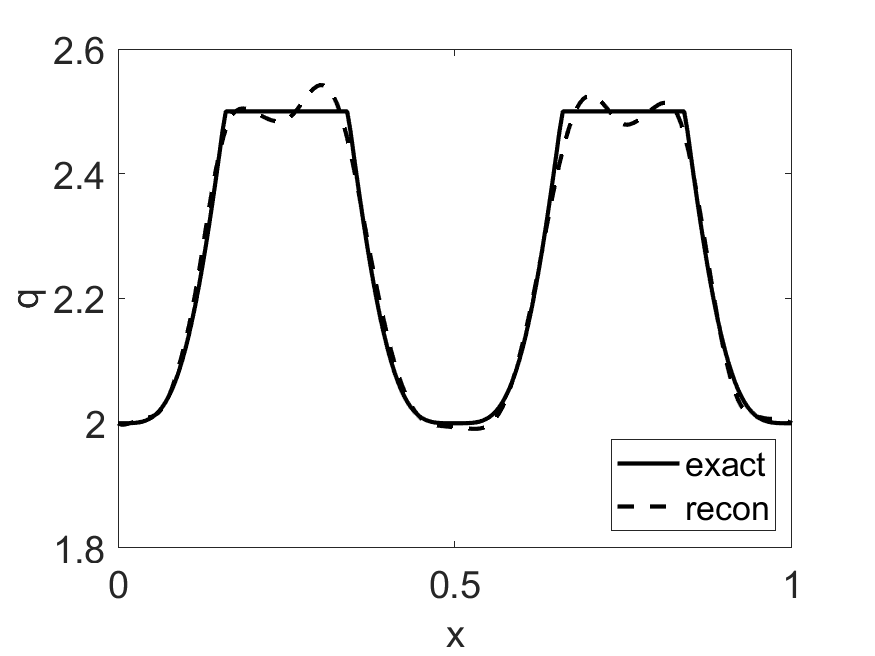} & \includegraphics[width=0.33\textwidth]{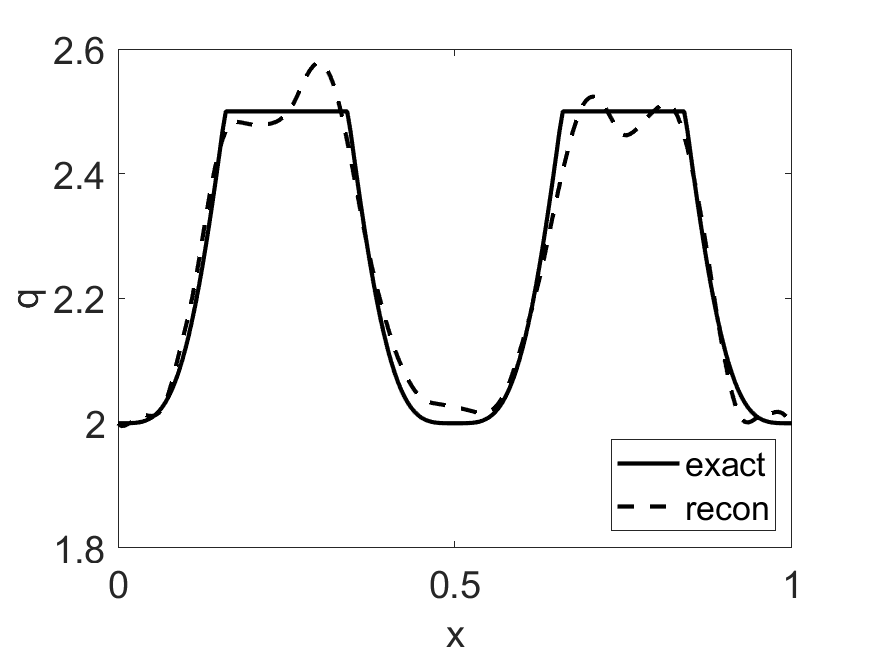}\\
      $\epsilon = 0$  & $\epsilon=\text{1.00e-2}$ & $\epsilon = \text{5.00e-2}$
   \end{tabular}
   \caption{Numerical reconstructions for Example \ref{exam:1dnonsmooth} with $\alpha=0.5$.
   \label{fig:recon-exam2}}
\end{figure}

\subsection{Numerical results in two spatial dimension}

Now we present numerical results for the following example on the unit square $\Omega=(0,1)^2$.
The domain $\Omega$ is first uniformly divided into $M^2$ small squares, each with side length $1/M$, and then a
uniform triangulation is obtained by connecting the low-left and upper-right vertices of each small square.
The reference data is first computed on a finer mesh with $M=100$ and a time step size $\tau=1/2000$.
The inversion is carried out with a mesh $M=40$ and $\tau=1/500$.

\begin{example}\label{exam:2d}
$u_0(x_1,x_2)=x_1(1-x_1)\sin (\pi x_2)$, $f\equiv0$, $q^\dag(x_1,x_2)=1+\sin(\pi x_1) x_2(1-x_2)$, and $T_0=0.8$.
\end{example}

The numerical results for the example with different noise levels are presented in Table
\ref{tab:exam2d} and Fig. \ref{fig:recon-exam3-al50}. The empirical observations
are in excellent agreement with for Example \ref{exam:1dsmooth}, e.g., convergence as the noise level
$\epsilon$ decreases to zero and slightly improved reconstructions for increasing fractional orders $\alpha$. Fig.
\ref{fig:recon-exam3-al50} indicates that the pointwise error $e_q=q_h^*-q^\dag$ lies mainly
in recovering the peak, however, the overall shape is well recovered.

\begin{table}[hbt!]
\centering
\caption{Reconstruction error $\|q_h^*-q^\dag\|_{L^2(\Omega)}$ for Example \ref{exam:2d}.\label{tab:exam2d}}
\begin{tabular}{c|cccccc}
\hline
  $\epsilon$ & 0    & 1.00e-3  & 5e-3 & 1.00e-2  & 3.00e-2 &  5.00e-2 \\
  $\gamma$ & 1.00e-14  & 3.00e-12 & 1.00e-11& 3.00e-11 & 2.00e-10 & 5.00e-10 \\
\hline
 $\alpha=0.25$ & 1.51e-3 & 1.75e-3  & 2.87e-3  &  3.64e-3  &  5.82e-3  & 7.81e-3\\
 $\alpha=0.50$ & 1.61e-3 & 1.86e-3  & 2.80e-3  &  3.62e-3  &  6.58e-3  & 9.57e-3\\
 $\alpha=0.75$ & 1.59e-3 & 2.21e-3  & 3.38e-3  &  4.66e-3  &  1.13e-2  & 1.64e-2\\
 \hline
\end{tabular}
\end{table}

\begin{figure}[hbt!]
\setlength{\tabcolsep}{0pt}
   \centering
   \begin{tabular}{cccc}
   \includegraphics[width=0.25\textwidth]{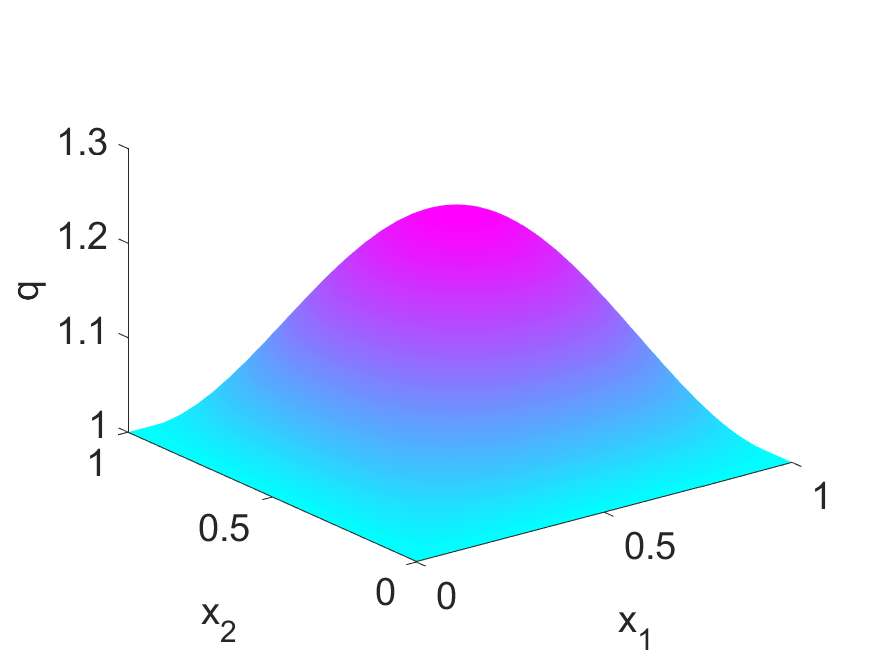}&\includegraphics[width=0.25\textwidth]{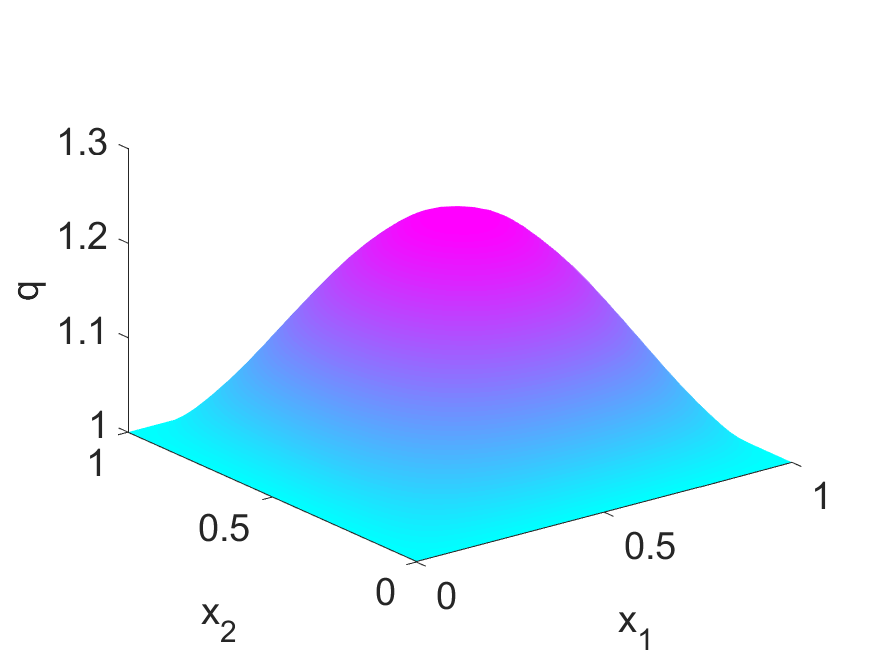} & \includegraphics[width=0.25\textwidth]{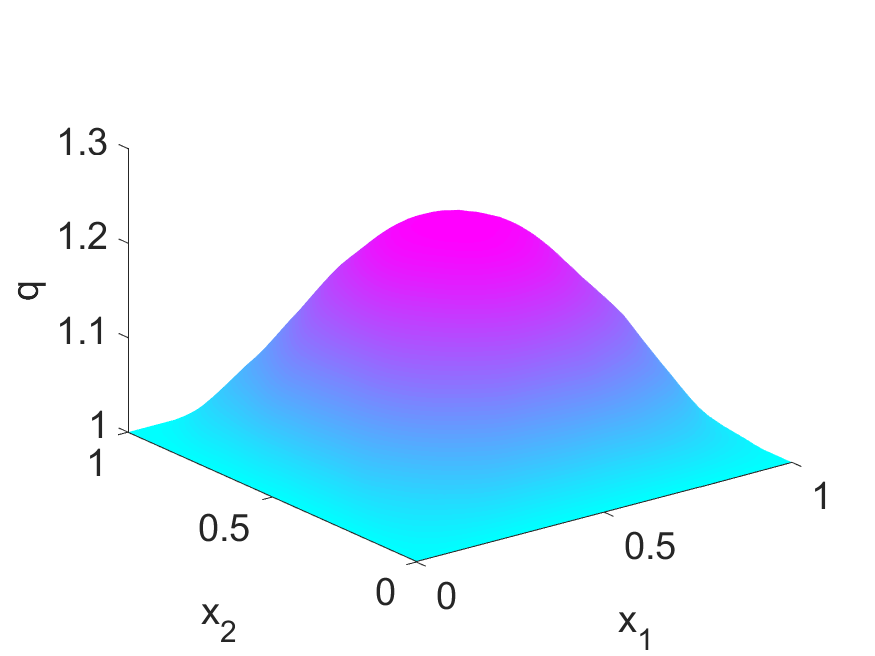} & \includegraphics[width=0.25\textwidth]{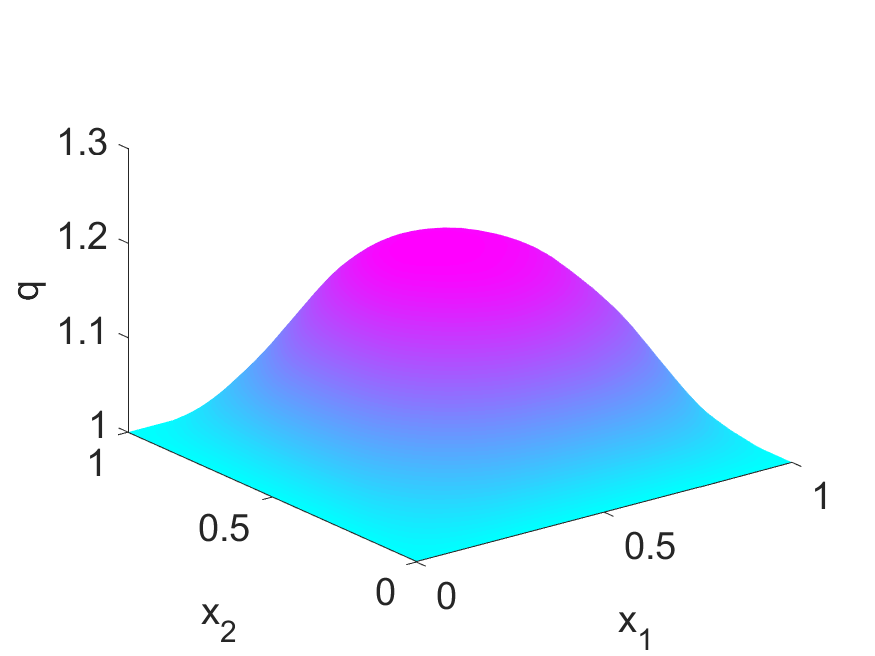}\\
   &\includegraphics[width=0.25\textwidth]{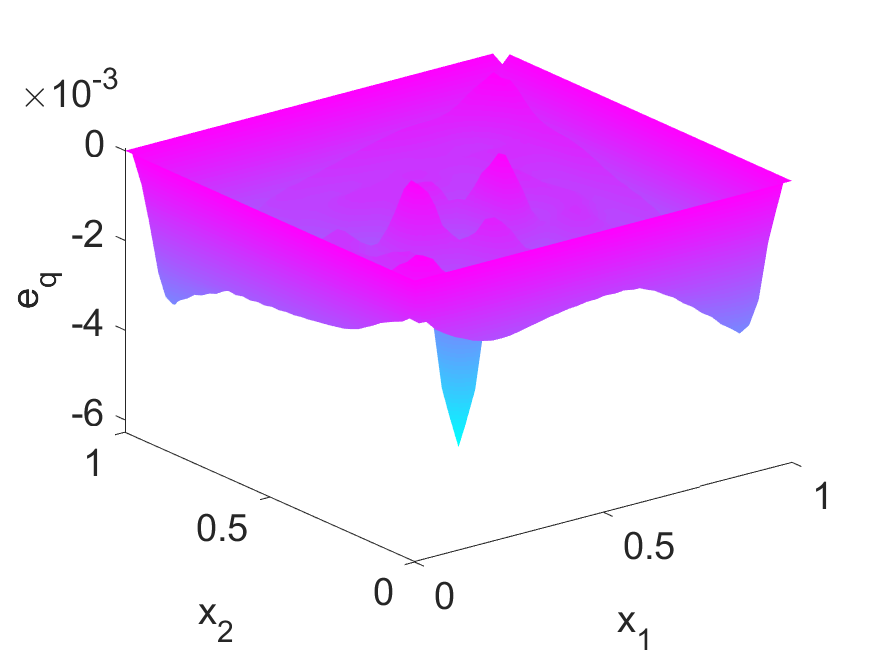} & \includegraphics[width=0.25\textwidth]{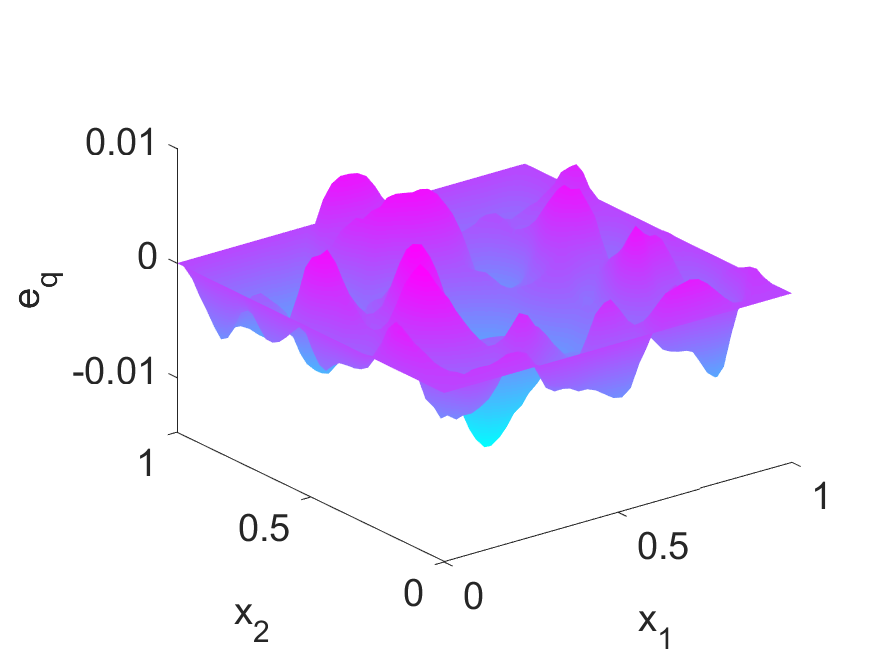} & \includegraphics[width=0.25\textwidth]{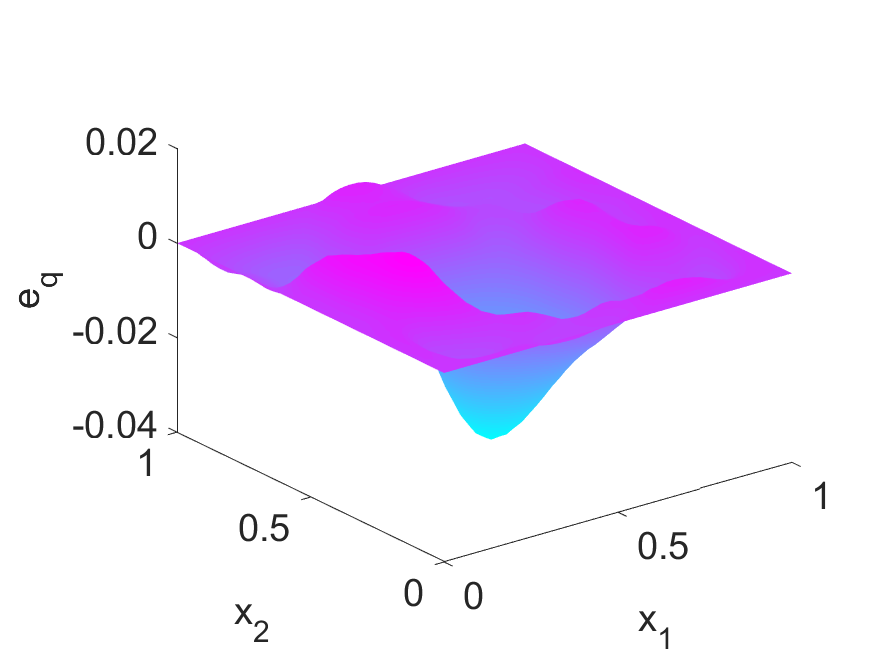}\\
   &   $\epsilon = 0$  & $\epsilon = \text{1.00e-2}$ & $\epsilon = \text{5.00e-2}$
   \end{tabular}
   \caption{Numerical reconstructions for Example \ref{exam:2d} with $\alpha=0.50$.\label{fig:recon-exam3-al50}}
\end{figure}

\section{Conclusions}\label{sec:concl}

In this work, we have studied the numerical recovery of a spatially dependent diffusion
coefficient from the full space-time datum using a regularized least-squares formulation. First, we proved the well-posedness
of the continuous formulation, e.g., stability and convergence. Second, we described a fully discrete scheme based
on the Galerkin finite element method in space and convolution quadrature in time, and showed the convergence of the numerical
approximation. Third, we derived error estimates for the numerical approximation under certain regularity
conditions on the exact diffusion coefficient and problem data.

This work only presents a first step towards rigorous numerical analysis of the inverse conductivity problem.
There are several avenues deserving further research. First, it is important to analyze the formally determined case,
e.g., terminal data or lateral Cauchy data. This is apparently very challenging, since even for the classical
parabolic counterparts, rigorous error estimate (in either a weighted norm or the usual $L^2(\Omega)$) remains elusive.
The techniques in this work also do not extend directly, due to its heavy use of discrete ``integration by parts''
formula over the interval $[0,T]$. Second, even for full data, the obtained error estimates remain suboptimal in
terms of its dependence with the mesh size $h$, when compared with the empirical convergence rate. Partly, this
arises from the inverse inequality, and it remains unclear how to achieve optimality. Third, it is of great interest
to recover the fractional order  $\alpha$ and the diffusion coefficient $q$ simultaneously, or a space-time dependent diffusion
coefficient. {Fourth and last, it is of much interest to derive the necessary and sufficient optimality conditions for the regularized
formulation, to carry out convergence and error analysis with respect to stationary points and to develop more efficient numerical
algorithms. The optimality system may be derived using the spike variation technique in a fairly general setting
(see, e.g., \cite{Lou:2011} for the standard parabolic case).}

\section*{Acknowledgements}
{The authors are grateful to two anonymous referees and the editor, Professor Karl Kunisch, for several
constructive comments that have led to an improvement in the presentation of the paper.}

\bibliographystyle{siam}
\bibliography{frac}

\appendix
\section{Proof of  \cref{lem:deriv-approx}}\label{app:deriv}

The proof relies on the discrete Laplace transform, and the following two estimates
\begin{align}
  \quad c_1 |z| &\le |\delta_\tau(e^{-z\tau})| \le c_2|z| \quad \forall z\in \Gamma_{\theta,\delta}^\tau,\label{eqn:gen}\\
  |\delta_\tau(e^{-z\tau})| &\le |z| \sum_{k=1}^\infty \frac{|z\tau|^{k-1}}{k!} \leq |z|e^{|z|\tau}, \quad \forall z\in \Sigma_\theta,\label{eqn:est-kernel}
\end{align}
with $\Sigma_\theta=\{z\in\mathbb{C}: z\neq0, |\arg(z)|\leq \theta\}$ and $\Gamma_{\theta,\delta}^\tau=\{z=re^{\pm {\rm i}\theta}, \delta \leq r \leq \frac{\pi\sin\theta}{\tau}\}
\cup\{ z= \delta e^{{\rm i}\varphi}: |\varphi|\leq \theta\}$, where $\theta\in (\frac{\pi}{2},\pi)$ is fixed, and the resolvent estimate
\begin{equation}\label{eqn:resol}
  \|(z-A(q))^{-1}\|\leq c|z|^{-1},\quad \forall z\in \Sigma_\theta.
\end{equation}
Now let $y(t)=u(t)-u_0$. Then $y(t)$ satisfies
\begin{equation*}
  \partial_t^\alpha y(t) - Ay(t) + Au_0 = f(t),\quad 0<t\leq T.
\end{equation*}
Taking Laplace transform gives
\begin{equation*}
  z^\alpha \widehat y(z) - A\widehat y(z) + z^{-1}Au_0 = \widehat f(z),
\end{equation*}
i.e., $\widehat y(z) = (z^\alpha-A)^{-1} (\widehat f(z)-z^{-1}Au_0)$. Since $\widehat{\partial_t^\alpha y(t)}=z^\alpha \widehat{y}(z)$
and $\widehat{\bar\partial_\tau^\alpha y}=\delta_\tau(z)^\alpha \widehat{y}(z)$, then $w^n=\partial_t^\alpha y(t_n)-\bar \partial_\tau^\alpha y(t_n)$ is represented by
\begin{align*}
 w^n  & =  \frac{1}{2\pi\mathrm{i}} \int_{\Gamma_{\theta,\delta}^\tau} e^{zt_n} K(z) (z^{-1}A u_0 - \widehat{f}(z))\,\d z  + \frac{1}{2\pi\mathrm{i}} \int_{\Gamma_{\theta,\delta}\setminus\Gamma_{\theta,\delta}^\tau} e^{zt_n} K(z)(z^{-1}A u_0-\widehat{f}(z))\,\d z,
\end{align*}
with $K(z)=(\delta_\tau(e^{-z\tau})^{\alpha}-z^\alpha) (z^\alpha-A)^{-1}$.
Recall the following estimate:
\begin{equation}\label{eqn:gen-1}
  | \delta_\tau(e^{-z\tau})^{\alpha}-z^\alpha| \le c \tau z^{1+\alpha},\quad \forall z\in \Gamma_{\theta,\delta}^\tau.
\end{equation}
By choosing $\delta=c/t_n$ and \eqref{eqn:resol},
${\rm I}=\frac{1}{2\pi\mathrm{i}} \int_{\Gamma_{\theta,\delta}^\tau} e^{zt_n} K(z) z^{-1}(A u_0-f(0))\d z $ is bounded by
\begin{equation*}
\begin{split}
\| {\rm I}\|_{L^2(\Omega)}
   &\le c \tau \| A u_0 -f(0)\|_{L^2(\Omega)} \Big(\int_{\frac{c}{t_n}}^{\frac{\pi\sin\theta}{\tau}} e^{-c\rho t_n} \,\d\rho
   + \int_{-\theta}^\theta ct_n^{-1} \,\d\theta \Big) \\
   &\le c\tau t_n^{-1}  \| A u_0-f(0)\|_{L^2(\Omega)}.
\end{split}
\end{equation*}
Further, by \eqref{eqn:est-kernel}, for any $z=\rho e^{\pm\mathrm{i}\theta}\in \Gamma_{\theta,\delta}\setminus\Gamma_{\theta,\delta}^\tau$ and
choosing $\theta\in(\pi/2,\pi)$ close to $\pi$,
\begin{align*}
  |e^{zt_n} (\delta_\tau(e^{-z\tau})^{\alpha}-z^\alpha)z^{ -1}| & \leq e^{t_n\rho\cos \theta}(c|z|^\alpha e^{\alpha\rho\tau}+|z|^\alpha)|z|^{-1}\leq c|z|^{\alpha-1}e^{-c\rho t_n}.
\end{align*}
Then the term ${\rm II}=\frac{1}{2\pi\mathrm{i}} \int_{\Gamma_{\theta,\delta}\setminus\Gamma_{\theta,\delta}^\tau} e^{zt_n} K(z)z^{-1}(A u_0-f(0))\d z$ is bounded by
\begin{equation*}
\begin{split}
\|{\rm II}\|_{L^2(\Omega)}
   &\le  c \| Au_0-f(0)\|_{L^2(\Omega)}  \int_{\frac{\pi\sin\theta}{\tau}}^\infty e^{-c\rho t_n} \rho^{-1}\,\d\rho
    \le c\tau t_n^{-1} \| Au_0-f(0)\|_{L^2(\Omega)}.
\end{split}
\end{equation*}
In view of the splitting $f(t)=f(0)+ tf'(0) +  {_0I_t^2f''}(t)$, it remains to bound the other two terms. Upon extending $f''(t)$ by
zero to $\mathbb{R}_-$, straightforward computation gives
\begin{align*}
  w^n = -\frac{1}{2\pi\mathrm{i}} \int_{\Gamma_{\theta,\delta}} e^{zt_n} K(z)z^{-2}\,\d zf'(0)\d s
   - \frac{1}{2\pi\mathrm{i}} \int_0^{t_n} \int_{\Gamma_{\theta,\delta}\backslash\Gamma_{\theta,\delta}^\tau} e^{z(t_n-s)}z^{-2}K(z)\,\d zf''(s)\,\d s.
\end{align*}
Then repeating the preceding argument leads to
\begin{equation*}
  \|w^n\|_{L^2(\Omega)} \leq c\tau \Big(\|f'(0)\|_{L^2(\Omega)} + \int_0^{t_n}\|f''(s)\|_{L^2(\Omega)}\d s\Big).
\end{equation*}
Combining the preceding estimates shows the desired assertion.

\end{document}